\newtheorem{theorem}{Theorem}[section]
\newtheorem{prop}[theorem]{Proposition}
\newtheorem{lemma}[theorem]{Lemma}
\newtheorem{cor}[theorem]{Corollary}
\newtheorem{prob}[theorem]{Problem}
\newtheorem{question}[theorem]{Question}
\theoremstyle{definition}
\newtheorem{defn}[theorem]{Definition}
\newtheorem{ex}[theorem]{Example}
\newtheorem{remark}[theorem]{Remark}
\numberwithin{equation}{section}
\newtheorem*{class}{Theorem on Classification of Invariant Ideals}
\DeclareMathOperator{\GL}{GL}
\DeclareMathOperator{\Char}{char}
\DeclareMathOperator{\lcm}{lcm}
\DeclareMathOperator{\pd}{pd}
\DeclareMathOperator{\depth}{depth}
\renewcommand{\k}{\mathbf{k}}
\newcommand{\Sym}{\operatorname{Sym}}
\newcommand{\defi}[1]{{\upshape\sffamily #1}}
\newcommand{\fl}[1]{#1^{\circ}}
\newcommand{\m}{\mathfrak{m}}
\newcommand{\Cont}[1]{{\left \lVert #1 \right \rVert}}
\newcommand{\cont}[1]{{\lVert #1 \rVert}}
\title{Ideals Preserved By Linear Changes of Coordinates in Positive Characteristic}
\author[Cattell-Ravdal]{Bj{\o}rn Cattell-Ravdal}
\address{Metropolitan State University of Denver}
\email{bcattell@msudenver.edu}
\author[Delargy]{Erin Delargy}
\address{Binghamton University}
\email{edelarg1@binghamton.edu}
\author[Ganguly]{Akash Ganguly}
\address{Carleton College}
\email{gangulya@carleton.edu}
\author[Guan]{Sean Guan}
\address{University of Califonia, Berkeley}
\email{seanguan@berkely.edu}
\author[Karn]{Trevor Karn}
\address{University of Minnesota - Twin Cities}
\email{karnx018@umn.edu}
\author[Perlman]{Michael Perlman}
\address{University of Minnesota - Twin Cities}
\email{mperlman@umn.edu}
\author[Sivakumar]{Saisudharshan Sivakumar}
\address{University of Florida}
\email{sivakumars@ufl.edu}
\begin{document}
\maketitle 
\begin{abstract}
We consider the polynomial ring in finitely many variables over an algebraically closed field of positive characteristic, and initiate the systematic study of ideals preserved by the action of the general linear group by changes of coordinates. We show that these ideals are classified by sets of carry patterns, which are finite sequences of integers introduced by Doty in the study of representation theory of the polynomial ring. We provide an algorithm to decompose an invariant ideal as a sum of carry ideals with no redundancies. Next, we study the conditions under which one carry ideal is contained in another, and completely characterize the image of the multiplication map between the space of linear forms and a subrepresentation of forms of degree $d$. Finally, we begin an investigation into free resolutions of these ideals. Our results are most explicit in the case of carry ideals in two variables, where we completely describe the monomial generators and syzygies using base-$p$ expansions of the parameters involved, and we provide a formula for the structure of the Tor modules in the Grothendieck group of representations.
\end{abstract}
\setcounter{tocdepth}{1}

\section{Introduction}
Let $V$ be an $n$-dimensional vector space over an algebraically closed field $\k$ of characteristic $p>0$. We consider the polynomial ring $S=\Sym(V)$ endowed with the natural action of the general linear group $\GL(V)$ via changes of coordinates. If we choose a basis $V=\k\langle x_1,\cdots,x_n\rangle$, then we have identifications $S=\k[x_1,\cdots,x_n]$ and $\GL(V)=\GL_n(\k)$. The action of $g=(g_{i,j})\in \GL_n(\k)$ on a monomial $x_1^{b_1}x_2^{b_2}\cdots x_n^{b_n}$ is 
$$
g\cdot (x_1^{b_1}x_2^{b_2}\cdots x_n^{b_n})=(g_{1,1}x_1+\cdots +g_{n,1}x_n)^{b_1}(g_{1,2}x_1+\cdots +g_{n,2}x_n)^{b_2}\cdots (g_{1,n}x_1+\cdots +g_{n,n}x_n)^{b_n}.
$$
In other words, $g$ sends $x_i$ to the dot product of the $i$-th column of $g$ with the vector $[x_1\;x_2\;\cdots\;x_n]$.

In this paper we initiate the systematic study of  \defi{invariant ideals} of $S$ with respect to this action, i.e. those ideals $I\subseteq S$ for which $g\cdot I \subseteq I$ for all $g\in \GL_n(\k)$. Such ideals are necessarily $\mathfrak{S}_n$-invariant monomial ideals, as they are invariant under the subgroup of permutation matrices in $\GL_n(\k)$, as well as the maximal torus of diagonal matrices. The property of being invariant is dependent upon characteristic. In characteristic zero, the only invariant ideals are powers of the homogeneous maximal ideal, due to the fact that the graded pieces $S_d=\Sym^d(\mathbb{C}^n)$ are irreducible representations of $\GL_n(\mathbb{C})$. On the other hand, the ideal $I=\langle x^2, y^2\rangle\subseteq \k[x,y]$ is $\GL_n(\k)$-invariant if and only if $p=2$ (since $2xy=0$). More generally, if $q=p^e$ for some $e\geq 0$, then $I=\langle x_1^q,x_2^q,\cdots,x_n^q\rangle$ is $\GL_n(\k)$-invariant due to the existence of the Frobenius endomorphism. Not all invariant ideals are of this form. For example, $I=\langle x^4, x^3y, xy^3, y^4\rangle\subseteq \k[x,y]$ is $\GL_2(\k)$-invariant if and only if $p$ is $2$ or $3$.

The first main problem we investigate in this work is:

\begin{prob}
Classify all $\GL_n(\k)$-invariant ideals in $S=\k[x_1,\cdots,x_n]$, and parametrize them with combinatorial and/or arithmetic data that encodes their algebraic structure.
\end{prob}

Our goal is to develop a theory analogous to that of $\GL_m(\mathbb{C})\times \GL_n(\mathbb{C})$-invariant ideals in the polynomial ring $\Sym(\mathbb{C}^m\otimes \mathbb{C}^n)\cong \mathbb{C}[x_{i,j}]$, known as determinantal thickenings \cite{DEP}. There, invariant ideals are parametrized by collections of Young diagrams, and many algebraic constructions are encoded via combinatorics of those diagrams. For instance, one may use the Young diagrams indexing an invariant ideal to test ideal containment, find radicals, and to calculate primary decompositions. 

In our setting, invariant ideals are parametrized by sets of \defi{carry patterns}, which are finite sequences of integers associated to base-$p$ expansions of the exponents of the minimal generators of the ideal. We say that a monomial $x_1^{b_1}x_2^{b_2}\cdots x_n^{b_n}$ has carry pattern $c=(c_1,\cdots,c_M)$ if $c_i$ is the amount carried to the $p^i$-column when adding $b_1,b_2,\cdots,b_n$ in base-$p$ (see Section \ref{sec:carrypatterns} for precise definition). These sequences appeared in the work of Doty \cite{Doty} on the structure of the polynomial ring $S$ as a representation of $\GL_n(\k)$. 

Given an integer $d\geq 0$ and a carry pattern $c$, we define the \defi{carry ideal} $I_{c,d}$ to be the ideal generated by monomials of degree $d$ with carry pattern $\leq c$, where the partial order on carry patterns is defined by $c'\leq c$ if $c_i'\leq c_i$ for all $i\geq 1$. For instance, when $p=2$ the ideal $I=\langle x^4, x^3y, xy^3, y^4\rangle\subseteq \k[x,y]$ from above is the carry ideal $I_{(1,0),4}$, as $x^4,y^4$ have carry pattern $(0,0)$ and $x^3y,xy^3$ have carry pattern $(1,0)$, and the missing monomial $x^2y^2$ has carry pattern $(0,1)$. 

Our first main result shows that carry ideals are the building blocks for all invariant ideals.

\begin{class}[Theorem \ref{structure theorem}]
Let $\k$ be an algebraically closed field of positive characteristic. An ideal in $\k[x_1,\cdots,x_n]$ is $\GL_n(\k)$-invariant if and only if it is a sum of carry ideals. 
\end{class}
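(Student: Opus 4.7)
The plan is to deduce the theorem from a graded, submodule-level analysis, invoking Doty's classification of $\GL_n(\k)$-subrepresentations of $S_d$ as the key input. I would begin by observing that any $\GL_n(\k)$-invariant ideal $I$ is invariant under the diagonal torus and is therefore a monomial ideal; in particular each graded piece $I_d = I \cap S_d$ is spanned by monomials and is itself a $\GL_n(\k)$-subrepresentation of $S_d$.

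For the easier ``sum of carry ideals implies invariant'' direction, I would verify that each individual carry ideal $I_{c,d}$ is $\GL_n(\k)$-invariant. By definition, $(I_{c,d})_d$ is the $\k$-span of degree-$d$ monomials with carry pattern $\leq c$, and the content of Doty's theorem (recalled in Section \ref{sec:carrypatterns}) is that such a span is a $\GL_n(\k)$-subrepresentation of $S_d$. Since $S$ is a $\GL_n(\k)$-algebra and $I_{c,d} = S \cdot (I_{c,d})_d$, it follows that $I_{c,d}$ is $\GL_n(\k)$-invariant, and hence so is any sum of such ideals.

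For the converse, let $I$ be $\GL_n(\k)$-invariant. The crux is to apply Doty's classification to each graded piece: every $\GL_n(\k)$-subrepresentation of $S_d$ is the $\k$-span of the monomials whose carry patterns form a down-set (with respect to $\leq$) in the poset of carry patterns of degree-$d$ monomials. For each $d$, letting $\{c_d^{(1)},\ldots,c_d^{(r_d)}\}$ be the set of maximal elements of the corresponding down-set, one obtains
$$
I_d \;=\; \sum_{i=1}^{r_d} \bigl(I_{c_d^{(i)},\,d}\bigr)_d.
$$
Setting $J = \sum_{d\geq 0}\sum_{i=1}^{r_d} I_{c_d^{(i)},\,d}$, the containment $J \subseteq I$ holds because the monomial generators of each $I_{c_d^{(i)},d}$ lie in $I_d \subseteq I$, and the reverse inclusion $I \subseteq J$ holds in each degree by the displayed identity. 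Hence $I = J$ is a sum of carry ideals.

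The principal obstacle is securing the representation-theoretic input: classifying all $\GL_n(\k)$-subrepresentations of $S_d$ and matching them to the ``down-set of carry patterns'' combinatorics. Once that is in hand, the passage from graded pieces to ideals is essentially bookkeeping, relying only on the facts that each carry ideal is generated in a single degree and that any submodule of $I_d$ generates an ideal contained in $I$.
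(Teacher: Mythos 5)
Your proposal is correct and takes essentially the same route as the paper: identify each graded piece $I_d$ with $T_{X_d}$ for an order-closed set of carry patterns via Doty's theorem, note that $(I_{c,d})_d = T_{c,d}$, and sum the carry ideals attached to the maximal carry patterns of each $X_d$, checking both inclusions degree by degree. The only additions in the paper's proof are (i) the observation via Lemma \ref{lemma:origin} that the sum may be taken \emph{finite} (as Theorem \ref{structure theorem} requires, whereas your $J$ is a priori an infinite, highly redundant sum — though all carry ideals in degrees beyond the point where $I_d=S_d$ are absorbed, so this is a one-line fix), and (ii) the pruning by the sets $Y_d$ of carry patterns of monomials in $S_1\cdot I_{d-1}$, which removes cross-degree redundancies and is a refinement rather than a logical necessity.
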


We prove this result in Section \ref{sec:class} using Doty's work on the $\GL_n(\k)$-submodule lattice of the graded pieces $S_d$ of $S=\k[x_1,\cdots,x_n]$, reviewed in Section \ref{sec:rep}. In Section \ref{sec:class} we provide an algorithm to decompose an invariant ideal into a sum of carry ideals with no redundancies, and in Section \ref{sec:powers} we investigate the effect of powers and Frobenius powers on invariant ideals. In Section \ref{sec:contain} we study the conditions under which one carry ideal is contained in another. Towards this end, Theorem \ref{carry multiply x_i induction} describes how the carry pattern of a monomial $x^{\underline{b}}$ determines the carry pattern of the monomial $x_i x^{\underline{b}}$ obtained after multiplication by a variable. As an application, we completely describe the image of the multiplication map $S_1\otimes W\to S_{d+1}$ when $W$ is a subrepresentation of $S_d$ (see Theorem \ref{thm:multmap}), thereby giving an inductive method to characterize containment relations between two carry ideals.

The second main problem we investigate is:

\begin{prob}
Given an invariant ideal $I\subseteq S$, calculate its minimal free resolution and determine the structure of each $\operatorname{Tor}_i^S(S/I,\k)$ as a representation of $\GL_n(\k)$.
\end{prob}

Since $I$ is an invariant ideal, it follows that $S/I$ is supported on the origin, so  $\pd_S(S/I)=n$  (see Proposition \ref{prop:pd}), and the Castelnuovo--Mumford regularity of $S/I$ is the maximal degree $d$ for which $(S/I)_d\neq 0$ (see Proposition \ref{regprop}). We describe a general procedure to get information about the $\GL_n(\k)$-structure of $\operatorname{Tor}_i^S(S/I,\k)$ using the Koszul complex (see Section \ref{sec:reptor}). Our results are most explicit in the case of two variables (see Section \ref{gens2} and Section \ref{section: Two variable case}), summarized as follows:
\begin{enumerate}   
    \item Not every finite sequence of non-negative integers appears as a carry pattern, and Doty describes the set $C(d,n,p)$ of carry patterns of degree $d$ monomials in $n$ variables in characteristic $p$ in a non-constructive fashion, via a system of equations (see Lemma \ref{lemma: two conditions on c}). In Proposition \ref{prop: carry only 0 or 1} we provide a complete list of elements in $C(d,2,p)$. In particular, every such carry pattern has entries in $\{0,1\}$.

    \item The work of Doty suggests a naive algorithm to find the generators of $I_{c,d}$: list all monomials of degree $d$, calculate their carry patterns, and eliminate those that do not have carry pattern $\leq c$. In two variables, we provide a closed formula for the generators of  $I_{c,d}$ by showing that it is a product of Frobenius powers of powers of the homogeneous maximal ideal (see Theorem \ref{thm:genstwovars}). The powers are determined by what we call the \textit{type $c$ segmentation of $d$} (see Section \ref{gens2}).

    \item We show that the quotient of $S$ by a carry ideal $I_{c,d}$ in two variables has a Hilbert--Burch resolution, and we provide closed formulas for the graded Betti numbers and Castelnuovo--Mumford regularity in terms of the type $c$ segmentation of $d$. As an application, we provide a formula for the spaces $\operatorname{Tor}_i^S(S/I_{c,d},\mathbf{k})_j$ in the Grothendieck group of representations of $\GL_2(\k)$.
\end{enumerate}

We end Section \ref{freeres} with a brief discussion of \textit{equivariant} free resolutions of invariant ideals and an example.

\section{Carry patterns and representation theory}\label{section: Carry Patterns}

Throughout this paper, $\k$ is an algebraically closed field of characteristic $p>0$, and $S= \k[x_1,\dots,x_n]$. In this section we recall information about the $\GL_n(\k)$-representation theory of the polynomial ring $S$.

\subsection{Base-$p$ expansions and carry patterns}\label{sec:carrypatterns} The base-$p$ expansion of an integer $d\geq 0$ is written
$$
d=(d_0,d_1,\cdots, d_M),
$$
where $0\leq d_0,\cdots,d_M\leq p-1$ satisfy
\[d = \sum_{j=1}^M d_j p^j.\]
We often write $M$ for the maximal index such that $d_M\neq 0$. We define 
\begin{equation}\label{eq:floor}
 \fl{d}=\min\{j|d_j\neq p-1\}.   
\end{equation}

Given a monomial $x^{\underline{b}}=x_1^{  b_1}x_2^ {b_2}\cdots x_n^{ b_n}\in S$ of degree $d = b_1+\cdots + b_n$, we frequently consider the base-$p$ expansions of the $b_i$. We will write $b_{i,j}$ for the coefficients of $p^j$ in the base-$p$ of $b_i$. Note that if $M$ is the largest $j$ such that $d_j\neq 0$, then for $j>M$, every $ b_{i,j}=0$. On the other hand, for $j< M$ it is possible that $b_{i,j}\geq d_j$ for some $i$.
This can happen when we need to ``carry" values from the $p^j$ place to the $p^{j+1}$ place during the process of addition with regrouping. This carrying is the usual notion of carrying we are accustomed to when adding numbers base-$10$.
Following Doty \cite{Doty} we define sequences of integers that encode the carries that occur. Let $x^{\underline{b}}\in S$ be a monomial of degree $d$.
    The \defi{carry pattern} $c(\underline{b})$ (or $c(x^{\underline{b}})$) of  $x^{\underline{b}}$ is the tuple of integers $(c_1(\underline{b}),c_2(\underline{b}),...,c_M(\underline{b}))$ defined by the following system of equations:
    \begin{equation}\label{carry pattern}
 \sum_{i=1}^n\sum_{0\leq j<\ell} b_{ij}p^j=c_\ell(\underline{b})p^\ell+\sum_{0\leq j<\ell}d_jp^j, \quad 1\leq \ell \leq M,
\end{equation}
    with the convention that $c_i(\underline{b})=0$ for $i<1$ and $i>M$. A more intuitive way to understand carry patterns is to view each $c_j(\underline{b})$ as the amount ``carried to the $p^j$ column" when adding the entries of $\underline{b}$ in base-$p$.

\begin{ex}
Let $S=\k[x,y]$ where $\Char(\k)=3$. Consider the monomial $x^{4}y^{6}$ of degree $d=10$. Then $d$ has the base-$3$ expression $d = (1,0,1)$, so that $M=2$. 
The base-$3$ expansions 
of the exponents are 
\[  b_1 =4=(1,1),\quad  b_2 = 6 = (0,2).\]

We add the base-$3$ expansions of the exponents, keeping track of how much is carried to the next column:
    \begin{center}
        \begin{tabular}{c c}
             $b_{1,1}$& $b_{1,0}$ \\
             $b_{2,1}$& $b_{2,0}$ 
        \end{tabular}
        \hspace{0.15in}$\longrightarrow$\hspace{0.15in}
        \begin{tabular}{c c c c }
         &$\mathbf{1}$ & $\mathbf{0}$ & \\
         && $1$ & $1$ \\
         +&& $2$ & $0$ \\
         \hline
         &1 & 0 & 1
    \end{tabular}
    \end{center}
    So $c(4,6) = (c_1(4,6), c_2(4,6)) = (0, 1)$. 

We could compute this more formally using (\ref{carry pattern}). Then $c_1(4,6)$ and $c_2(4,6)$ are given by 
\[1=c_1(4,6)\cdot 3+1\quad \text{and}\quad 1+1\cdot 3+2\cdot 3 =c_2(4,6)\cdot 3^2+1.\]
Thus $c_1(4,6)=0$, and $c_2(4,6)=1$, so again we see $c(4,6) = (0,1)$. 
\end{ex}

The next example shows that $c_j(\underline{b})$ can be greater than $p-1$.

\begin{ex}   
    Let $S=\k[x,y,z]$ with $\Char(\k)=2$, and consider the monomial $x^3y^3z^3$ with degree $d=9=(1,0,0,1)$, so $M=3$. We add the base-$2$ expansions of each of the exponents, keeping track of how much is carried to the next column:
    \begin{center}
    \begin{tabular}{c c c c c }
         &$\mathbf{1}$& $\mathbf{2}$ & $\mathbf{1}$ & \\
         &&$0$& $1$ & $1$ \\
         &&$0$& $1$ & $1$ \\ 
         +&&$0$& $1$ & $1$ \\
         \hline
         & 1 & $0$ & $0$ & $1$
    \end{tabular}
    \end{center}
    Thus, $c(\underline{b})=(1,2,1)$. 
\end{ex}

Given integers $n\geq 1$ and $d\geq 0$, we write $C(d,n,p)$ for the set of all carry patterns of degree $d$ monomials in $\mathbf{k}[x_1,\cdots, x_n]$. The following result describes $C(d,n,p)$ via a system of equations.

\begin{lemma}[\cite{Doty} Lemma 3] \label{lemma: two conditions on c}
    Let $(c_1,c_2,\cdots ,c_M)$ be an $M$-tuple of integers. Then $c \in C(d,n,p)$ if and only if the following inequalities hold for all $i\geq 1$:
    $$
    0\le c_i \le \sum_{j\ge i}d_j p^{j-i}\quad \textnormal{and}\quad 0\le d_i +p c_{i+1}-c_i\le n(p-1).
    $$
\end{lemma}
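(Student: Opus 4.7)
The plan is to extract from the defining equation (\ref{carry pattern}) a single ``telescoping'' identity that expresses each column sum $\sum_i b_{i,\ell}$ in terms of $c_\ell,c_{\ell+1},d_\ell$, and then read both inequalities off this identity; the converse is a construction where the inequalities let us fill in legal base-$p$ digits.

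First I would subtract the defining equation for $\ell+1$ from the one for $\ell$ (using the convention $c_0=c_{M+1}=0$ and $d_j=0$ for $j>M$), dividing by $p^\ell$, to obtain
\begin{equation}\label{eq:colsum}
\sum_{i=1}^n b_{i,\ell} \;=\; d_\ell + p\,c_{\ell+1} - c_\ell \qquad \text{for all } \ell \ge 0.
\end{equation}
Next, taking the defining equation at index $\ell$ and using that $\sum_i b_i = d$, I would rearrange it to
\[
c_\ell\, p^\ell \;=\; \sum_{j\ge \ell} d_j p^j \;-\; \sum_{i=1}^n \sum_{j\ge \ell} b_{i,j}\, p^j.
\]
These two identities do all the real work.

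For necessity: every $b_{i,j}$ satisfies $0\le b_{i,j}\le p-1$, so the right-hand side of (\ref{eq:colsum}) lies in $[0,n(p-1)]$, giving the second inequality. The rearranged equation shows $c_\ell p^\ell \le \sum_{j\ge \ell} d_j p^j$ (since the double sum is nonnegative), which upon dividing by $p^\ell$ yields $c_\ell \le \sum_{j\ge \ell} d_j p^{j-\ell}$; nonnegativity $c_\ell\ge 0$ follows because the carried amount cannot be negative, which one also reads from (\ref{eq:colsum}) at lower indices (or by an obvious induction on $\ell$ using $c_1 \ge 0$, which comes from $c_1 = \sum_i b_{i,0} - d_0$ being a carry digit).

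For sufficiency, assume the two inequalities hold. By (\ref{eq:colsum}), for each $\ell$ I need to choose nonnegative integers $b_{1,\ell},\ldots,b_{n,\ell}\in\{0,\ldots,p-1\}$ whose sum equals $d_\ell + p\,c_{\ell+1} - c_\ell$; since this target lies in $[0,n(p-1)]$ by hypothesis, such a choice exists (for instance, greedily assigning $p-1$ to each $b_{i,\ell}$ in turn until the remainder is absorbed). Set $b_i := \sum_{j} b_{i,j}\,p^j$, which is the base-$p$ expansion since $b_{i,j}\in\{0,\ldots,p-1\}$. A short telescoping computation shows $\sum_i b_i = d$, so $\underline b = (b_1,\ldots,b_n)$ is a legitimate exponent vector of a monomial of degree $d$; another telescoping computation shows
\[
\sum_{i=1}^n \sum_{0\le j<\ell} b_{i,j}\, p^j \;=\; \sum_{0\le j<\ell} d_j p^j + c_\ell p^\ell,
\]
so by uniqueness of the carry pattern, $c(\underline b) = (c_1,\ldots,c_M)$.

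The only real obstacle is bookkeeping: checking that the chosen $b_{i,j}$ really are the base-$p$ digits of the $b_i$ (which follows from the bound $b_{i,j}\le p-1$), and confirming that the two telescopes — one for $\sum_i b_i = d$ and one for the partial sum appearing in the carry equation — go through cleanly.
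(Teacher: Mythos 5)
The paper does not actually prove this lemma --- it is quoted from Doty --- so there is no in-paper argument to compare against; your proof is the standard one and is essentially correct. Differencing consecutive instances of the defining system (\ref{carry pattern}) (with $c_0=c_{M+1}=0$) gives the column-sum identity $\sum_{i} b_{i,\ell}=d_\ell+pc_{\ell+1}-c_\ell$, the digit bounds $0\le b_{i,\ell}\le p-1$ then yield necessity of the second family, the rearrangement $c_\ell p^\ell=\sum_{j\ge\ell}d_jp^j-\sum_i\sum_{j\ge\ell}b_{ij}p^j$ yields the first, and for sufficiency one fills each column with legal digits summing to the prescribed target and checks by telescoping that the resulting $\underline b$ has degree $d$ and satisfies (\ref{carry pattern}) with the given $c$. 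Both telescopes close precisely because $c_0=c_{M+1}=0$.

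One indexing point deserves care. Your sufficiency step at the column $\ell=0$ needs $0\le d_0+pc_1\le n(p-1)$, i.e., the $i=0$ instance of the second inequality (with $c_0=0$). As the lemma is literally printed, the inequalities are only assumed for $i\ge 1$, and the $i=0$ instance is not implied by them: for $p=n=2$ and $d=4=(0,0,1)$, the tuple $(c_1,c_2)=(2,1)$ satisfies every stated inequality with $i\ge 1$ but is not a carry pattern, since column $0$ would have to sum to $d_0+pc_1=4>n(p-1)=2$. So either read the second family as holding for all $i\ge 0$ --- which is how the paper itself uses it later, e.g., in defining $c^\sharp$ it invokes $d_j+pc_{j+1}-c_j\le n(p-1)$ for all $0\le j\le M$ --- or state that instance explicitly before writing ``by hypothesis.'' With that reading made explicit, your argument is complete; note also that, as your construction shows, the first family of inequalities is then redundant for the sufficiency direction (it is a consequence of the second, via necessity applied to the constructed monomial).
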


For any pair of sequences $c = (c_1, c_2, c_3, \ldots) $
and $c' = (c'_1, c'_2, c'_3,\ldots)$, we define the relation $\le$ given by component-wise comparison: $c \le c'$ if $c_i \le c_i'$ for all $i$.
We endow $C(d,n,p)$ with this partial order. 

\begin{cor}[\cite{Doty} Corollary 4]\label{mincarry}
    There exists a unique minimal carry pattern in $C(d,n,p)$, namely the carry pattern $(0,\cdots,0)$. 
\end{cor}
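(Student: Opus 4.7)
The plan is to deduce both claims directly from Lemma \ref{lemma: two conditions on c}, together with one explicit monomial that witnesses the all-zero carry pattern.

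First, I would establish that $(0,\cdots,0) \in C(d,n,p)$. The quickest route is to produce a monomial realizing this pattern: consider $x_1^d$, so that $b_1 = d$ and $b_i = 0$ for $i \geq 2$. Then the base-$p$ digits satisfy $b_{1,j} = d_j$ and $b_{i,j} = 0$ for $i \geq 2$, and the defining equation \eqref{carry pattern} becomes
\[
\sum_{0 \leq j < \ell} d_j p^j = c_\ell p^\ell + \sum_{0 \leq j < \ell} d_j p^j,
\]
forcing $c_\ell = 0$ for every $1 \leq \ell \leq M$. Alternatively, one may simply verify that the tuple $(0,\ldots,0)$ of length $M$ satisfies the two inequalities of Lemma \ref{lemma: two conditions on c}: the first, $0 \leq 0 \leq \sum_{j \geq i} d_j p^{j-i}$, is immediate, and the second reduces to $0 \leq d_i \leq n(p-1)$, which holds because $0 \leq d_i \leq p-1 \leq n(p-1)$ for $n \geq 1$.

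Second, I would argue minimality. By the left-hand inequality in Lemma \ref{lemma: two conditions on c}, every carry pattern $c = (c_1,\ldots,c_M) \in C(d,n,p)$ satisfies $c_i \geq 0$ for all $i$. Hence with respect to the componentwise order on $M$-tuples, $(0,\ldots,0) \leq c$ for every $c \in C(d,n,p)$, so $(0,\ldots,0)$ is the minimum, and uniqueness of a minimum in a poset is automatic.

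There is essentially no obstacle here; the only subtlety is a bookkeeping one, namely ensuring that all carry patterns in $C(d,n,p)$ are treated as tuples of the same length $M$ (where $M$ is the top base-$p$ digit index of $d$) so that componentwise comparison makes sense. Once that convention is fixed, the lemma does all the work.
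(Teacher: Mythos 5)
Your proof is correct: the monomial $x_1^d$ realizes the all-zero carry pattern, and the left-hand inequality $0\le c_i$ in Lemma \ref{lemma: two conditions on c} makes $(0,\cdots,0)$ the minimum of $C(d,n,p)$ under the componentwise order. The paper cites this corollary from Doty without reproducing a proof, so there is nothing in-paper to compare against, but your argument is exactly the standard one and is complete.
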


We extend this result to show that $C(d,n,p)$ has a unique maximal element for \emph{any} $(d,n,p)$. Before we state the result, we define two operations on carry patterns. Given two carry patterns $c$ and $c'$, we define \[\lcm(c,c')=(\max(c_1,c_1'),\max(c_2,c_2'),...,\max(c_M,c_M')).\] 
    Similarly, we define
    \[\gcd(c,c') = (\min(c_1, c_1'), \ldots, \min(c_M, c_M')).\]

\begin{prop}\label{prop:lcm is join}
    If $c,c' \in C(d,n,p)$ then both $\lcm(c,c')$ and $\gcd(c,c')$ belong to $C(d,n,p)$. The poset $C(d,n,p)$ is a finite lattice, and thus has a maximum (a unique maximal element).
\end{prop}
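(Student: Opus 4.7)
The plan is to verify the defining inequalities of Lemma \ref{lemma: two conditions on c} directly for the componentwise $\max$ and $\min$ sequences, and then invoke the fact that a finite poset with both joins and meets has a unique maximum.

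Fix $c, c' \in C(d,n,p)$ and write $\tilde c = \lcm(c,c')$ and $\hat c = \gcd(c,c')$. The first inequality in Lemma \ref{lemma: two conditions on c}, namely $0 \le \tilde c_i, \hat c_i \le \sum_{j\ge i} d_j p^{j-i}$, is immediate, since both $c_i$ and $c_i'$ satisfy that bound and taking $\max$ or $\min$ preserves it. The work is in the second condition, $0 \le d_i + p\, \tilde c_{i+1} - \tilde c_i \le n(p-1)$ (and likewise for $\hat c$). The idea is a simple case split on which of $c,c'$ achieves the extremum at index $i$ or $i+1$.

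For the upper bound on $\tilde c$, without loss of generality take $\tilde c_{i+1} = c_{i+1}$. Since $\tilde c_i \ge c_i$, we get
\[
d_i + p\tilde c_{i+1} - \tilde c_i \;\le\; d_i + p c_{i+1} - c_i \;\le\; n(p-1),
\]
using that $c \in C(d,n,p)$. For the lower bound on $\tilde c$, without loss of generality take $\tilde c_i = c_i$; since $\tilde c_{i+1} \ge c_{i+1}$, we obtain $d_i + p\tilde c_{i+1} - \tilde c_i \ge d_i + p c_{i+1} - c_i \ge 0$. The two bounds for $\hat c$ are handled symmetrically: for the lower bound take $\hat c_{i+1} = c_{i+1}$ and use $\hat c_i \le c_i$; for the upper bound take $\hat c_i = c_i$ and use $\hat c_{i+1} \le c_{i+1}$. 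In each case the inequality reduces to the corresponding inequality for $c$, which holds by hypothesis.

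This shows that $C(d,n,p)$ is closed under componentwise $\max$ and $\min$, so it is a lattice under the order $\le$. Finiteness follows from the first condition of Lemma \ref{lemma: two conditions on c}, which bounds each coordinate $c_i$ by a fixed integer depending only on $d$ and $p$. A finite lattice always has a unique maximum (take the join of all its elements), which gives the final claim. I do not expect any serious obstacle; the whole argument is a direct case analysis, and the only minor subtlety is remembering to choose the "right" case for each of the four inequalities so that the hypothesis on $c$ or $c'$ can be applied cleanly.
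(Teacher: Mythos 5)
Your proof is correct and follows essentially the same route as the paper: verify the two inequalities of Lemma \ref{lemma: two conditions on c} directly for the componentwise $\max$ and $\min$, then conclude from finiteness. Your case analysis is in fact a touch cleaner than the paper's (you choose which of $c,c'$ achieves the extremum separately for each of the four inequalities, exploiting the opposite signs of $c_{i+1}$ and $c_i$, whereas the paper splits on which sequence achieves the max at both indices simultaneously), but the underlying idea is identical.
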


\begin{proof}
We only prove the assertion about $\lcm(c,c')$, as the proof that $\gcd(c,c')\in C(d,n,p)$ is similar. We  consider two carry patterns $c=(c_1,\cdots,c_M)$ and $c'=(c_1',\cdots,c_M')$ in $C(d,n,p)$. We will show that $\lcm(c,c')$ satisfies the two conditions given by Lemma \ref{lemma: two conditions on c}. By Lemma \ref{lemma: two conditions on c} we know that $0\le c_i \le \sum_{j\ge i}d_j p^{j-i}$ holds for each $c_i$; the same inequality holds for each $c_i'$. Hence $0\le \max(c_i,c_i') \le \sum_{j\ge i}d_j p^{j-i}$, and so $\lcm(c,c')$ satisfies the first condition in Lemma \ref{lemma: two conditions on c}.

Now we consider the second condition of Lemma \ref{lemma: two conditions on c}. Again, by Lemma \ref{lemma: two conditions on c}, we have
\[
0\le d_i +p c_{i+1}-c_i\le n(p-1),
\quad \text{and} \quad
0\le d_i +p c'_{i+1}-c'_i\le n(p-1),
\] for all $i$.
If $\max(c_i,c'_i) = c_i$ and $\max(c_{i+1},c'_{i+1}) = c_{i+1}$ then we are done. Similarly if $\max(c_i,c'_i) = c'_i$ and $\max(c_{i+1},c'_{i+1}) = c'_{i+1}$ then we are done. Without loss of generality, consider the case where $\max(c_i,c'_i) = c_i$ and $\max(c_{i+1},c'_{i+1}) = c'_{i+1}$. But since $c'_i \leq c_i$ and $c_{i+1} \leq c'_{i+1}$, we have
\[
d_i +pc_{i+1}-c_i \leq d_i +pc'_{i+1}-c_i\leq d_i +p c'_{i+1}-c'_i,
\]
which implies
\[
0\leq d_i +p c'_{i+1}-c_i\leq n(p-1),
\]
as desired. 

It is straightforward to see that $\lcm(c,c')$ is the \emph{unique} minimal element which is greater than both $c, c'$ and similarly that $\gcd(c, c')$ is the unique minimal element which is less than both $c, c'$. This implies that $C(d,n,p)$ is a lattice. Since $C(d,n,p)$ is finite, this implies that it has a maximum element. 
\end{proof}

\begin{ex}\label{degree 10 cp example}
    Let $S=\k[x,y]$ with $\Char(\k)=2$. Consider the set of all carry patterns of degree ten monomials:
    \[C(10,2,2)=\{(0,0,0), (1,0,0), (0,0,1), (1,0,1), (1,1,1)\}\]

The following table describes the carry patterns of the monomials in $S_{10}$.
    \vspace{0.1in}
    \begin{center}
    \begin{tabular}{c|c}
        $c(x^{\underline{b}})$ & $x^{\underline{b}}$ \\ \hline
        $(1,1,1)$& $x^7y^3,x^3y^7$\\
        $(1,0,1)$& $x^5y^5$\\
        $(1,0,0)$&$x^9y,xy^9$\\
        $(0,0,1)$&$x^6y^4,x^4y^6$\\
        $(0,0,0)$&$ x^{10},x^8y^2,x^2y^8,y^{10}$
    \end{tabular}
    \end{center}
    \vspace{0.1in}

    Below is the Hasse diagram of the lattice $C(10,2,2)$:

\[\begin{tikzcd}[row sep=3ex, column sep=1ex]
    {} & {(1,1,1)} \arrow[d, dash] & {}\\
    {} & {(1,0,1)}\arrow[dl, dash] \arrow[dr,dash]& {}\\
    {(1,0,0)} \arrow[dr, dash] & {} & {(0,0,1)} \arrow[dl,dash]\\
    {} & {(0,0,0)} & {}
\end{tikzcd}\]

Observe that $C(10,2,2)$ does not include $(1,1,0)$ nor $(0,1,1)$.
On the other hand, $c(x^7y^2) = (0,1,1)\in C(9,2,2)$, exhibiting the dependence of $C(d,n,p)$ on $d$.
\end{ex}

\subsection{Representation theory}\label{sec:rep}
Let $\mathbf{k}$ be an algebraically closed field of characteristic $p\geq 0$, and let $\GL_n(\mathbf{k})$ denote the general linear group of linear automorphisms of $\mathbf{k}^n$. We write $\mathbb{T}_n\subseteq \GL_n(\k)$ for the maximal torus of diagonal matrices. Given a partition $\lambda=(\lambda_1\geq \lambda_2\geq \cdots\geq  \lambda_n\geq 0)\in \mathbb{N}^n$ we write  $L(\lambda)$ for the irreducible representation of $\GL_n(\mathbf{k})$ of highest weight $\lambda$ (see \cite[Section 2.2]{Weyman} where it is written $M_{\lambda}(\k^n)$). The representation $L(\lambda)$ is characterized as the unique irreducible subrepresentation of the indecomposable Schur module $\mathbb{S}_{\lambda}(\mathbf{k}^n)$ \cite[Proposition 2.2.8]{Weyman}. When $\operatorname{char}(\k)=0$ we have $L(\lambda)=\mathbb{S}_{\lambda}(\mathbf{k}^n)$ for all partitions $\lambda$ \cite[Theorem 2.2.10]{Weyman}, but when $\operatorname{char}(\k)=p>0$ we generally have $L(\lambda)\subsetneq \mathbb{S}_{\lambda}(\mathbf{k}^n)$. For instance, when $p=2$, $n=2$, and $\lambda=(2,0)$ we have $\mathbb{S}_{(2)}(\mathbf{k}^2)=S_2=\k\langle x^2, xy, y^2\rangle$ and $L(2,0)=\k \langle x^2, y^2\rangle$. On the other hand, when $\lambda=(1^d,0^{n-d})$ we have $L(1^d)=\mathbb{S}_{(1^d)}(\k^n)=\bigwedge^d(\k^n)$ in every characteristic.

Let $S_d$ be the vector space of homogeneous polynomials of degree $d$ in $S=\mathbf{k}[x_1,\cdots,x_n]$. It is isomorphic as a representation of $\GL_n(\k)$ to the Schur module $\mathbb{S}_{(d)}(\k^n)$, and it is spanned by the monomials of degree $d$:
$$
S_d=\k\langle x_1^{b_1}x_2^{b_2}\cdots x_n^{b_n}\mid b_1+b_2+\cdots +b_n=d\rangle.
$$
The one-dimensional subspace of $S_d$ spanned by the monomial $x^{\underline{b}}$ is the \defi{torus weight space} corresponding to $\underline{b}$. For instance, $x^7y^3z^2$ spans the weight space of $S_{12}$ corresponding to $\underline{b} = (7,3,2)$. We say that a torus weight $\underline{b}$ is a \defi{partition} if $b_1\geq b_2\geq \cdots \geq b_n$. As a representation of $\mathbb{T}_n$, the space $S_d$ decomposes as a direct sum of one-dimensional torus weight spaces. Similarly, every $\GL_n(\k)$-subquotient of $S_d$ decomposes as a direct sum of one-dimensional torus weight spaces \cite[Proposition 2.2.2]{Weyman}. Since $\GL_n(\k)$ contains the permutation matrices $\mathfrak{S}_n$ as a subgroup, the set of torus weights in a subquotient of $S_d$ is closed under permutations. In particular, we can describe a subquotient of $S_d$ by the \textit{partitions} that appear as exponent vectors inside it, which is convenient when $n$ and $d$ are large.  

We recall the $\GL_n(\k)$-submodule lattice of $S_d$ \cite{Doty}. We say a set $B\subseteq C(d,n,p)$ is \defi{order-closed} if for every $c$ in $B$, and every $c'$ in $C(d,n,p)$, if $c'<c$, then $c'$ is in $B$ (an order-closed set is sometimes called an order ideal, but we avoid this terminology in order to avoid confusion with ideals of a ring).  

\begin{ex}
    In Example \ref{degree 10 cp example}, let $B=\{(0,0,0),(1,0,0),(0,0,1)\}$. The set $B$ is an order-closed subset of $C(10,2,2)$, where $(1,0,0)$ and $(0,0,1)$ are both maximal elements.
\end{ex}

Let $B$ be a subset of $C(d,n,p)$.
    Define $T_B$ (or $T_{B,d}$ when we want to emphasize degree) as the $\k$-submodule of $S_d$ generated by the monomials with carry patterns in $B$. 
    
    Given $c \in C(d,n,p)$ we use the shorthand $T_c$ (or $T_{c,d}$) for
    $$
    T_c:=T_{\{c'\in C(d,n,p) \mid c'\leq c\}}.
    $$

We are now ready to state the main theorem of \cite{Doty}, which asserts that for $B\subseteq C(d,n,p)$, the $\k$-submodule $T_B$ of $S_d$ is a $\GL_n(\k)$-submodule of $S_d$ if and only if $B$ is order-closed.

\begin{theorem}[\cite{Doty} Theorem 3]\label{carry-submodule correspondence}
    The correspondence $B\to T_B$ defines a lattice isomorphism between the lattice of order-closed subsets of $C(d,n,p)$ and the lattice of $\GL_n(\k)$-submodules of $S_d$.
\end{theorem}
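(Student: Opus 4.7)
My plan is to reduce the theorem to an analysis of how the one-parameter subgroups of elementary matrices $e_{kl}(t)\colon x_l\mapsto x_l+tx_k$ (fixing the remaining variables) act on monomials, since together with the diagonal torus $\mathbb{T}_n$ and the permutation matrices $\mathfrak{S}_n$ these generate $\GL_n(\k)$. Any $\GL_n(\k)$-submodule $W\subseteq S_d$ is automatically a direct sum of the one-dimensional torus weight spaces $\k\cdot x^{\underline{b}}$, and, since $W$ is permutation-invariant, membership in $W$ depends only on the partition shape of $\underline{b}$. Because the carry pattern is itself a symmetric function of the exponent vector, $W$ must be of the form $T_{B(W)}$ for $B(W)=\{c(\underline{b})\mid x^{\underline{b}}\in W\}$. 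The assignment $B\mapsto T_B$ is therefore an order-preserving bijection onto permutation- and torus-stable subspaces, and it satisfies $T_{B_1\cup B_2}=T_{B_1}+T_{B_2}$ and $T_{B_1\cap B_2}=T_{B_1}\cap T_{B_2}$; once I identify the image of this map with the $\GL_n(\k)$-submodules, the lattice isomorphism is immediate.

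The action of $e_{kl}(t)$ expands as
\[
e_{kl}(t)\cdot x^{\underline{b}}=\sum_{m=0}^{b_l}\binom{b_l}{m}t^m\, x^{\underline{b}+m(\underline{e}_k-\underline{e}_l)},
\]
and by Lucas's theorem the nonvanishing terms in characteristic $p$ are exactly those for which each base-$p$ digit of $m$ is at most the corresponding digit of $b_l$; I will abbreviate this condition as $m\leq_p b_l$. The first step of the proof is to establish monotonicity: if $m\leq_p b_l$, then $c(\underline{b}+m(\underline{e}_k-\underline{e}_l))\leq c(\underline{b})$ componentwise. Using the defining identity $c_\ell(\underline{b})\,p^\ell=\sum_i(b_i\bmod p^\ell)-(d\bmod p^\ell)$, the digitwise constraint on $m$ forces the subtraction $b_l-m$ to proceed with no borrow at digit $\ell$, while the addition $b_k+m$ either stays below $p^\ell$ or spills over by exactly $p^\ell$; a direct case check then shows that each coordinate of the carry pattern is either unchanged or decreases by exactly one. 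This yields the easy direction: if $B$ is order-closed, then $T_B$ is $\GL_n(\k)$-stable.

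The main obstacle is the converse: showing that for any monomial $x^{\underline{b}}$ with carry pattern $c$ and any $c'\leq c$ in $C(d,n,p)$, some monomial of carry pattern $c'$ appears with nonzero coefficient in some $\GL_n(\k)$-translate of $x^{\underline{b}}$. Once this is in hand, projecting onto torus weight components forces every submodule containing $x^{\underline{b}}$ to contain all of $T_c$, so $B(W)$ must be order-closed. I would proceed by induction on $\sum_\ell(c_\ell-c_\ell')$, reducing to the case where $c'$ is obtained from $c$ by decreasing a single coordinate $c_\ell$ by one. The hard part is then to produce indices $k,l$ and a value $m\leq_p b_l$ for which $(b_k\bmod p^\ell)+(m\bmod p^\ell)\geq p^\ell$ while no analogous overflow occurs at any other digit; I expect the inequalities of Lemma~\ref{lemma: two conditions on c} to translate into the arithmetic conditions needed to guarantee such an $m$. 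Once $m$ is fixed, the coefficient of the target monomial in $e_{kl}(t)\cdot x^{\underline{b}}$ is a nonzero constant times $t^m$, and taking $t$ generic and projecting onto the target torus weight extracts the desired monomial from the submodule generated by $x^{\underline{b}}$, completing the induction.
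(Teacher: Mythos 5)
The paper does not actually prove this statement: it is quoted from Doty (Theorem 3 of \cite{Doty}) and used as a black box, so there is no in-paper argument to measure yours against. On its own terms, your outline follows what is essentially Doty's strategy --- generate $\GL_n(\k)$ by the torus, the permutation matrices, and the unipotent subgroups $e_{kl}(t)$, expand their action on monomials, and control the surviving binomial coefficients via Lucas's theorem --- and the ``easy'' direction (order-closed implies stable, via the monotonicity $c(\underline{b}+m(\underline{e}_k-\underline{e}_l))\leq c(\underline{b})$ for $m\leq_p b_l$) is a sound plan, modulo actually carrying out the digit-by-digit case check you describe.

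The converse, however, has two genuine gaps. First, your claim that any submodule $W$ must equal $T_{B(W)}$ ``because the carry pattern is a symmetric function of the exponent vector'' is a non sequitur: torus- and permutation-stability only show that membership in $W$ depends on the partition shape of $\underline{b}$, and distinct partition shapes routinely share a carry pattern (e.g.\ $(10,0)$ and $(8,2)$ both have carry pattern $(0,0,0)$ in $C(10,2,2)$). Linking two monomials with the \emph{same} carry pattern requires the unipotent action; it is exactly the $c'=c$ case of the statement you later propose to prove by induction, and your induction is vacuous in that base case. Second, the step you yourself flag as the hard part --- producing, for each covering relation in $C(d,n,p)$, indices $k,l$ and a digit-compatible $m$ that forces an overflow at the prescribed position $\ell$ and nowhere else --- is the actual heart of the theorem, and ``I expect the inequalities of Lemma~\ref{lemma: two conditions on c} to translate into the arithmetic conditions needed'' is not an argument. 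To repair the structure you should strengthen the inductive claim to: every monomial of degree $d$ with carry pattern $\leq c(\underline{b})$ lies in the $\GL_n(\k)$-submodule generated by $x^{\underline{b}}$ (covering both the equal-carry-pattern case and the strict decrease), and then supply the explicit construction of $k$, $l$, $m$. As it stands the proposal is a plausible sketch of Doty's proof rather than a proof.
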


\begin{ex}

    For $n=2$ and $p=2$ we consider $\GL_2(\k)$-submodules of $S_{10}$ corresponding to order-closed subsets of $C(10,2,2)$. We index each order-closed subset of $C(10,2,2)$ by its maximal carry patterns:
    \vspace{0.1in}
    \begin{center}

    \begin{tabular}{c|c}
            maximal carry patterns &$\GL_n(\k)-$submodule of $S_{10}$\\
        \hline
        (1,1,1)& $S_{10}$\\
        (1,0,1)& $\langle x^{10},x^9y,x^8y^2,x^6y^4,x^5y^5,x^4y^6,x^2y^8,xy^9,y^{10}\rangle$\\
        (1,0,0),\;(0,0,1)&$\langle y^{10}, x^9y,x^8y^2,x^6y^4,x^4y^6,x^2y^8,xy^9,y^{10}\rangle$\\
        (1,0,0)&$\langle x^{10}, x^9y,x^8y^2,x^2y^8,xy^9,y^{10}\rangle$\\
        (0,0,1)&$\langle x^{10},x^8y^2,x^6y^4,x^4y^6,x^2y^8,y^{10}\rangle$\\
        (0,0,0)&$\langle x^{10}, x^8y^2,x^2y^8,y^{10}\rangle$
    \end{tabular}
    \end{center}

    \smallskip
    
    The submodule in the third row is $T_{\{(0,0,0),(1,0,0),(0,0,1)\}}=T_{(1,0,0)}+T_{(0,0,1)}$.

    Examining torus weights (by considering degree sequences of monomials) in each row of the table above, we see that $S_{10}$ has composition factors $L(10,0)$, $L(6,4)$, $L(9,1)$, $L(5,5)$, $L(7,3)$, each with multiplicity one. More precisely, $S_{10}$ is characterized by the following non-split short exact sequences:
    $$
    0\longrightarrow T_{(1,0,1)}\longrightarrow S_{10}\longrightarrow L(7,3)\longrightarrow 0,\quad\quad  0\longrightarrow T_{(1,0,0)}+T_{(0,0,1)}\longrightarrow  T_{(1,0,1)}\longrightarrow L(5,5)\longrightarrow 0
    $$
    $$
    0\longrightarrow L(10,0)=T_{(0,0,0)}\longrightarrow T_{(1,0,0)}+T_{(0,0,1)}\longrightarrow L(6,4)\oplus L(9,1)\longrightarrow 0.
    $$
    \end{ex}
In general, if $B'\subseteq B$ with $|B|=|B'|+1$ then $T_{B}/T_{B'}\cong L(\lambda)$, where $x^{\lambda}$ is the lexicographically maximal monomial appearing in $T_{B}/T_{B'}$ (see \cite[Proposition 2.2.3, Theorem 2.2.9]{Weyman}). 

As a consequence of Corollary \ref{mincarry} and Proposition \ref{prop:lcm is join} we recover the following well-known result.

\begin{cor}\label{cor:simplesub}
The Schur module $S_{d}$ has  unique simple submodule $L(d)$ (corresponding to carry pattern $(0,\cdots,0)$), and unique simple quotient corresponding to the unique maximal carry pattern.
\end{cor}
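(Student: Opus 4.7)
The plan is to translate the corollary into a purely combinatorial statement via Theorem \ref{carry-submodule correspondence}, and then read off the conclusion from the two lattice-theoretic facts already established: Corollary \ref{mincarry} (unique minimum in $C(d,n,p)$) and Proposition \ref{prop:lcm is join} (unique maximum in $C(d,n,p)$). Under the isomorphism $B\mapsto T_B$ between order-closed subsets of $C(d,n,p)$ and $\GL_n(\k)$-submodules of $S_d$, a simple submodule corresponds to a \emph{minimal} non-empty order-closed subset of $C(d,n,p)$, and a simple quotient corresponds to a \emph{maximal} proper order-closed subset.

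For the submodule claim, I first observe that the minimal non-empty order-closed subsets of $C(d,n,p)$ are exactly the singletons $\{c\}$ with $c$ minimal in $C(d,n,p)$: any non-empty order-closed set $A$ must contain the principal downset of some element, and that finite downset contains a minimal element $c^*$ of $C(d,n,p)$, for which $\{c^*\}$ is already order-closed and contained in $A$. Corollary \ref{mincarry} then gives exactly one such singleton, $\{(0,\ldots,0)\}$, so there is a unique simple submodule $T_{\{(0,\ldots,0)\}}$ of $S_d$. Applying the remark preceding the corollary to $B'=\emptyset\subset B=\{(0,\ldots,0)\}$ identifies $T_{\{(0,\ldots,0)\}}$ with $L(\lambda)$, where $x^\lambda$ is the lex-maximum monomial of carry pattern $(0,\ldots,0)$; since $x_1^d$ has carry pattern $(0,\ldots,0)$ (no carries occur with a single non-zero summand) and is lex-maximal, $\lambda=(d,0,\ldots,0)$ and hence $T_{\{(0,\ldots,0)\}}\cong L(d)$.

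The simple quotient statement is the dual argument. A submodule $T_{B'}$ is maximal proper in $S_d$ iff $C(d,n,p)\setminus B'$ is a minimal non-empty upward-closed subset of $C(d,n,p)$; by the analogous finite-poset argument, such subsets are singletons $\{c\}$ with $c$ maximal in $C(d,n,p)$. Proposition \ref{prop:lcm is join} furnishes exactly one such maximum $c_{\max}$, so there is a unique simple quotient, identified via the same remark with $L(\mu)$, where $x^\mu$ is the lex-maximum monomial of carry pattern $c_{\max}$.

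There is no serious obstacle here; the argument is essentially a direct corollary of the three cited results. The only point that deserves explicit verification is the poset lemma that minimal non-empty order-closed subsets of a finite poset are in bijection with its minimal elements (and dually for maximal proper order-closed subsets), which I would state and prove in one line as above.
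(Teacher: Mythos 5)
Your argument is correct and is exactly the route the paper intends: the paper states this corollary without proof as a consequence of Corollary \ref{mincarry}, Proposition \ref{prop:lcm is join}, and Theorem \ref{carry-submodule correspondence}, and your write-up simply fills in the (straightforward) poset lemma that minimal non-empty order-closed sets are singletons on minimal elements, together with the identification $T_{(0,\ldots,0)}\cong L(d)$ via the lex-maximal monomial $x_1^d$. Nothing further is needed.
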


\begin{ex}\label{exsmalld}
(1) If $d\leq p-1$ then $C(d,n,p)=\{(0,\cdots,0)\}$ and $S_d=L(d)$. 

(2) If $d=p$ then $L(p)=\k\langle x_1^p,x_2^p,\cdots,x_n^p\rangle \subsetneq S_p$.
\end{ex}

See Corollary \ref{cor:simple2} for a characterization of irreducibility of $S_d$ when the number of variables is $n=2$. 

\section{Invariant ideals and their properties}\label{section: invariant ideals}

Let $\k$ be an algebraically closed field of characteristic $p>0$ and let $S = \k[x_1,\dots,x_n]$. Given a subgroup $G\subseteq \GL_n(\k)$, we say that an ideal $I$ in $S$ is $G$-invariant if $g\cdot f\in I$ for all $g\in G$ and all $f\in I$. In this section, we classify $\GL_n(\k)$-invariant ideals and study their generators. In what follows, we often refer to  $\GL_n(\k)$-invariant ideals  simply as ``invariant ideals''.

\subsection{Classification of invariant ideals}\label{sec:class} In this subsection we use the representation theory developed in Section \ref{section: Carry Patterns} to classify invariant ideals in $S$. We freely use notation and terminology surrounding carry patterns and the submodule lattice of a graded component $S_d\subseteq S$.

\begin{defn}
Given $d\geq 0$ and $c\in C(d,n,p)$ we define the \defi{carry ideal} $I_{c,d}\subseteq S$ associated to $c$ via
$$
I_{c,d}=S\langle T_{c,d} \rangle.
$$

In other words, it is the ideal generated by monomials $x^{\underline{b}}$ of degree $d$ with $c(\underline{b})\leq c$.
\end{defn}

As an ideal $I=\langle m_1,\cdots, m_s\rangle\subseteq S$ is invariant if and only if $g\cdot m_i\in I$ for all $g\in \GL_n(\k)$ and all $1\leq i\leq s$, it follows that carry ideals are invariant.  Furthermore, it follows from Corollary \ref{cor:simplesub} that $I_{c,d}$ contains $T_{(0,\ldots,0),d}$ as a subrepresentation. Hence, every carry ideal generated in degree $d$ contains the powers $x_1^d,\cdots, x_n^d$ (and every monomial in $L(d)\subseteq S_d$). 

\begin{ex}\label{ex:somecarryideals}
Below are some examples of carry ideals.
\begin{enumerate}
    \item If $n=2$, $p=2$, $d=2$, and $c=(0)$, then $I_{c,d}=\langle x^2,y^2\rangle$. 
    \item If $n=2$, $p=5$, $d=25$, and $c=(0,1)$, then
    $I_{c,d} = \langle x^{25},\:x^{20}y^{5},\:x^{15}y^{10},\:x^{10}y^{15},\:x^{5}y^{20},\:y^{25}\rangle$.
    \item If $n=3$, $p=5$, $d=35$, and $c = (2,0)$, then minimal generators of $I_{c,d}$ are the monomials with exponent vectors given by a permutation of the following partitions:
    \begin{center}
        \begin{tabular}{cccccccccc}
$(35, 0, 0)$,& $(34, 1, 0)$,& $(33, 2, 0)$,& $(33, 1, 1)$,& $(32, 3, 0)$,& $(32, 2, 1)$,&
$(31, 4, 0)$,& $(31, 3, 1)$,& \\
$(31, 2, 2)$,& $(30, 5, 0)$,& $(30, 4, 1)$,& $(30, 3, 2)$,&
$(29, 6, 0)$,& $(29, 5, 1)$,& $(29, 4, 2)$,& $(29, 3, 3)$,& \\
$(28, 7, 0)$,& $(28, 6, 1)$,& $(28, 5, 2)$,& $(28, 4, 3)$,& $(27, 8, 0)$,& $(27, 7, 1)$,& $(27, 6, 2)$,& $(27, 5, 3)$,&\\
$(27, 4, 4)$,& $(26, 9, 0)$,& $(26, 8, 1)$,& $(26, 7, 2)$,& $(26, 6, 3)$,& $(26, 5, 4)$,&
$(25, 10, 0)$,& $(25, 9, 1)$,&\\
$(25, 8, 2)$,& $(25, 7, 3)$,& $(25, 6, 4)$& $(25, 5, 5)$.
        \end{tabular}
    \end{center}
    so that, for example, the monomials $x^{35}$, $xy^{31}z^3$, $x^{4}y^{5}z^{26}$ are generators of $I_{c,d}$. We remark that
    $d=35$ is minimal such that there exists $c\in C(d,3,5)$ satisfying $c_i>1$ for some $i$ and $T_{c,d}\neq S_d$.
    For example, the monomials $x^{24}y^{11}$ and $x^5y^8z^{22}$ are degree $35$ monomials which are not in $I_{c,d}$.
    
    \item Let $c$ be the maximal carry pattern in $C(d,n,p)$ (see Corollary \ref{cor:simplesub}). Then $I_{c,d}=\langle x_1,\cdots,x_n\rangle^d$.
\end{enumerate}
\end{ex}

It follows from Theorem \ref{carry-submodule correspondence} that
\begin{equation}\label{eq:containsamedegree}
I_{c',d}\subseteq I_{c,d} \quad \iff \quad c'\leq c.    
\end{equation}

Next we define the more general notion of an ideal corresponding to a collection of carry patterns. Given a fixed $n\geq 1$ and $p>0$ we define
$$
C(n,p)=\bigsqcup_{d\geq 0}C(d,n,p),
$$
and for a finite subset $B$ of $C(n,p)$, we write $B_d=B\cap C(d,n,p)$, so that
$$
B=\bigsqcup_{d\geq 0} B_d.
$$
With this notation, we make the following definition.
\begin{defn}\label{def:invarianti}
Given a finite subset $B$ of $C(n,p)$, we define the $S$-ideal
$$
I_B=\sum_{d\geq 0}\sum_{c\in B_d}I_{c,d}.
$$
\end{defn}
Since $I_B$ is a sum of carry ideals, it is a $\GL_n(\k)$-invariant ideal. It follows from Theorem \ref{carry-submodule correspondence} that, if we write $B_d^{\max}$ for the set of maximal elements in $B_d$, then we have an equality
\begin{equation}\label{def:invideal}
I_B=\sum_{d\geq 0}\sum_{c\in B_d^{\max}}I_{c,d}.
\end{equation}
In particular, if $B,B'\subseteq C(n,p)$ and $B_d^{\max}=(B'_d)^{\max}$ for all $d\geq 0$, then $I_B=I_{B'}$. We observe that the sum (\ref{def:invideal}) may have redundancies if there are $d<d'$, $c\in B_d^{\max}$, and $c'\in B_{d'}^{\max}$ with $I_{c',d'}\subseteq I_{c,d}$. In Section \ref{sec:contain} we explore the conditions on $c,c'$ that guarantee $I_{c',d'}\subseteq I_{c,d}$.

It follows immediately from the definition that if $B,B'\subseteq C(n,p)$ then
\begin{equation}
    I_{B\cup B'}=I_B+I_{B'}.
\end{equation}

Next, we show that $S/I$ has finite length when $I$ is a nonzero invariant ideal.

\begin{lemma}\label{lemma:origin}
If $I$ is a nonzero invariant ideal, then $I_d=S_d$ for some $d\gg 0$. In particular, $\sqrt{I}=\langle x_1,\cdots,x_n\rangle$.   
\end{lemma}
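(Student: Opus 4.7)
The plan is to reduce the problem to the case of a Frobenius-type ideal $\langle x_1^{d'},\ldots,x_n^{d'}\rangle$, for which both conclusions follow from a pigeonhole argument. First I would use the Classification Theorem (or even just the definition of an invariant ideal as a sum of carry ideals, which is available from the discussion preceding this lemma): since $I$ is nonzero, it contains some carry ideal $I_{c,d'}$ with $d'>0$ (the case $d'=0$ would force $I=S$, which we may exclude when proving the radical statement).

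Next I would invoke the observation made immediately after the definition of carry ideals: because $I_{c,d'}$ contains $T_{(0,\ldots,0),d'}$ as a subrepresentation (by Corollary \ref{cor:simplesub}), and in particular the pure powers $x_1^{d'},\ldots,x_n^{d'}$ all have carry pattern $(0,\ldots,0)$, we obtain
$$
J := \langle x_1^{d'},\ldots,x_n^{d'}\rangle \subseteq I_{c,d'} \subseteq I.
$$
Now a monomial $x_1^{a_1}\cdots x_n^{a_n}$ of degree $d\geq n(d'-1)+1$ must, by pigeonhole, have $a_i\geq d'$ for some $i$, hence lies in $J$. Therefore $S_d = J_d \subseteq I_d$ for all $d\geq n(d'-1)+1$, which establishes the first assertion.

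For the radical statement I would argue the two containments separately. Since $x_i^{d'}\in J\subseteq I$ for each $i$, we get $x_i\in\sqrt{I}$ and hence $\langle x_1,\ldots,x_n\rangle \subseteq \sqrt{I}$. For the reverse containment, if $I$ is proper then every carry ideal appearing in its decomposition is generated in positive degree and therefore sits inside the homogeneous maximal ideal $\langle x_1,\ldots,x_n\rangle$, so $I\subseteq \langle x_1,\ldots,x_n\rangle$ and hence $\sqrt{I}\subseteq \langle x_1,\ldots,x_n\rangle$ (this ideal being prime).

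I do not anticipate any serious obstacle: the only subtlety is a bookkeeping one, namely recording that the radical claim implicitly assumes $I$ is proper (otherwise $\sqrt{I}=S$), and that the first claim trivially holds in the improper case. The key input is the fact that carry ideals contain all pure powers in their generating degree, which is an immediate consequence of the representation-theoretic setup of Section \ref{sec:rep}.
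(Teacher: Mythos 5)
Your argument is essentially the paper's: locate the pure powers $x_1^{d'},\dots,x_n^{d'}$ inside $I$ (the paper takes $d'=d_1$ to be the lowest nonzero degree of $I$), then pigeonhole to see that every monomial of sufficiently large degree is divisible by some $x_i^{d'}$, and read off the radical. One caution: do not route this through the Classification Theorem, since that theorem's proof cites this lemma (to see that the sum of carry ideals is finite), so that framing would be circular; the non-circular input you also name---that a nonzero subrepresentation of $S_{d'}$ contains $L(d')=T_{(0,\dots,0),d'}$ and hence the pure powers, via Corollary \ref{cor:simplesub} and Theorem \ref{carry-submodule correspondence}---is exactly what the paper uses and is all you need.
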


\begin{proof}
Let $I_{d_1}$ denote the lowest-degree non-zero homogeneous component of $I$. 
    Observe that every monomial of degree $nd_1$ is divisible by $x_i^{d_1}$ for some index $i$. Thus, $I_{n d_1}$ contains all monomials of degree $n d_1$ since $I_{d_1}$ contains $x_i^{d_1}$ for each index $i$. In other words, for any choice of invariant ideal $I$, there is always an index, namely $nd_1$ such that $I_{nd_1} = S_{nd_1}$.

    Since $x_i^{d_1}\in I$ for all $1\leq i\leq n$, it follows that $x_1,\cdots, x_n\in \sqrt{I}$. Since $I$ is homogeneous it follows that $\sqrt{I}=\langle x_1,\cdots,x_n\rangle$.
\end{proof}

A more geometric argument is that $S/I$ is necessarily supported on the closure of a $\GL_n(\k)$-orbit of $\mathbb{A}^n_{\k}$. Thus, if $I\neq 0$ then $S/I$ is supported on the origin.

We are now ready to state the main result of the section.

\begin{theorem}[Classification of Invariant Ideals]
    \label{structure theorem}
    Every $\GL_n(\k)$-invariant ideal $I$ in $S=\k[x_1,\cdots, x_n]$ is equal to $I_B$ for some finite $B\subseteq C(n,p)$.
\end{theorem}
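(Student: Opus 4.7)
The plan is to use Doty's classification (Theorem \ref{carry-submodule correspondence}) degree by degree, and then use Lemma \ref{lemma:origin} to cut the data off at a finite degree. Specifically, since $I$ is $\GL_n(\k)$-invariant and homogeneous, each graded component $I_d \subseteq S_d$ is a $\GL_n(\k)$-submodule, so Theorem \ref{carry-submodule correspondence} furnishes an order-closed subset $B_d \subseteq C(d,n,p)$ with $I_d = T_{B_d,d}$. Taking $B^{\infty} = \bigsqcup_{d\geq 0} B_d$ gives a (possibly infinite) subset of $C(n,p)$ that certainly satisfies $I = I_{B^{\infty}}$, so the real content is to replace $B^{\infty}$ by a finite subset.

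To achieve finiteness, first invoke Lemma \ref{lemma:origin} to fix $d_0$ such that $I_d = S_d$ for every $d \geq d_0$. By Example \ref{ex:somecarryideals}(4), the maximal carry pattern $c_{\max} \in C(d_0,n,p)$ satisfies $I_{c_{\max},d_0} = \langle x_1,\cdots,x_n\rangle^{d_0}$, which is contained in $I$ and contains every homogeneous element of degree $\geq d_0$. Now define
\[
B = \bigsqcup_{d=0}^{d_0-1} B_d \;\sqcup\; \{c_{\max}\} \;\subseteq\; C(n,p),
\]
which is finite because each $C(d,n,p)$ is finite and the union is taken over finitely many degrees.

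It remains to check that $I = I_B$. The inclusion $I_B \subseteq I$ is immediate: for $d < d_0$ each generator of $I_{c,d}$ with $c \in B_d$ is a monomial in $T_{B_d,d} = I_d \subseteq I$, and $I_{c_{\max},d_0} = \langle x_1,\cdots,x_n\rangle^{d_0} \subseteq I$ by the choice of $d_0$. For the reverse inclusion, it suffices (since both sides are monomial ideals) to check that every monomial $m \in I$ lies in $I_B$. If $\deg(m) \geq d_0$ then $m \in \langle x_1,\cdots,x_n\rangle^{d_0} = I_{c_{\max},d_0} \subseteq I_B$; if $\deg(m) = d < d_0$ then $m \in I_d = T_{B_d,d}$, and since $B_d$ is order-closed this means $c(m) \leq c$ for some $c \in B_d$, giving $m \in I_{c,d} \subseteq I_B$.

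There is no real obstacle here beyond correctly assembling the ingredients: the only subtle point is ensuring finiteness of $B$, which is exactly what the truncation at $d_0$ (together with the observation that $I_{c_{\max},d_0}$ is the power of the maximal ideal) accomplishes. Everything else is a direct application of Doty's submodule-carry pattern correspondence and the definition of $I_B$.
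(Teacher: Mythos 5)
Your proof is correct. The core strategy is the same as the paper's (apply Doty's correspondence, Theorem \ref{carry-submodule correspondence}, to each graded piece $I_d$ and use Lemma \ref{lemma:origin} to obtain finiteness), but you handle the finiteness differently: you truncate at a degree $d_0$ with $I_{d_0}=S_{d_0}$ and throw in the single maximal carry pattern of $C(d_0,n,p)$, whose carry ideal is $\langle x_1,\dots,x_n\rangle^{d_0}$. The paper instead sets $B_d=(X_d\setminus Y_d)^{\max}$, where $Y_d$ records the carry patterns already accounted for by $S_1\cdot I_{d-1}$; this makes $B_d=\emptyset$ automatically once $I_d=S_1\cdot I_{d-1}$, and---more importantly---produces a decomposition of $I$ as a sum of carry ideals \emph{with no redundancies}, which is the extra feature advertised just before the paper's proof and used later in Section \ref{sec:class}. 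Your $B$ will typically contain many redundant carry patterns (e.g.\ every non-maximal element of each $B_d$, plus the degree-$d_0$ term whenever it is already generated in lower degree), which is harmless for the bare existence statement but gives up the canonical, irredundant parametrization. Both arguments are complete; yours buys simplicity at the cost of minimality.
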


The following proof gives a procedure to find $B$ with no redundancies. 

\begin{proof}

    For each $d\geq 0$ the graded piece $I_d$ of $I$ is a subrepresentation of $S_d$. We define $X_d\subseteq C(d,n,p)$ to be the set of carry patterns of monomials in $I_d$, and define $Y_d\subseteq X_d$ to be the set of carry patterns of monomials in $S_1\cdot I_{d-1}$. We set
    $$
    B=\bigsqcup_{d\geq 0}B_d,\quad\textnormal{where}\quad B_d=(X_d\setminus Y_d)^{\max}.
    $$
    Then $I_d=T_{X_d}$ and we have
    $$
    I=\sum_{d\geq 0}\sum_{c\in X_d}I_{c,d}=\sum_{d\geq 0}\sum_{c\in X_d\setminus Y_d}I_{c,d}=\sum_{d\geq 0}\sum_{c\in B_d}I_{c,d},
    $$
 where the second equality follows from Theorem \ref{carry-submodule correspondence} and the definition of $Y_d$, and the third equality follows from $(\ref{def:invideal})$. The latter sum is necessarily finite by Lemma \ref{lemma:origin}. 
    
To see that $B$ has no redundancies, suppose that $c\in B_d$ and that $I_{c',d'}\subseteq I_{c,d}$ for some $d'>d$. In this case, the set of carry patterns of generators of $I_{c',d'}$ belongs to $Y_{d'}$, so $c'$ does not belong to $B_{d'}$.   
\end{proof}

It follows from Corollary \ref{cor:simplesub}  that if $Y_d\neq \emptyset$ then $(0,\cdots,0)\in Y_d$.

\begin{ex}
    Let $S=\k[x,y]$ where $\Char(\k)=2$. Consider the invariant ideal
    $$
    I=\langle x^8, x^7y^3, x^5y^4, x^4y^5, x^3y^7, y^8\rangle,
    $$
    which has generators of degrees $8$, $9$, and $10$. The graded pieces of $I$ in those degrees are
    $$
    I_8=\k\langle x^8,y^8\rangle,\quad I_9=\k\langle x^9,x^8y,x^5y^4,x^4y^5,xy^8,y^9\rangle,\quad I_{10}=S_{10}.
    $$
    Using notation from the proof of Theorem \ref{structure theorem} we have that
    $$
    X_8=\{(0,0,0)\},\quad X_9=\{(0,0,0),(0,0,1)\},\quad Y_9=\{(0,0,0)\},
    $$
    $$
    X_{10}=\{(0, 0, 0), (0, 0, 1), (1, 0, 0), (1, 0, 1), (1, 1, 1)\}, \quad Y_{10}=X_{10}\setminus \{(1,1,1)\}. 
    $$
    We conclude that $I=I_{(0,0,0),8}+I_{(0,0,1),9}+I_{(1,1,1),10}$.
\end{ex}

\begin{remark}
Since $\GL_n(\k)$-invariant ideals are examples of  $\mathfrak{S}_n$-invariant monomial ideals, one could also index them by partitions parametrizing the $\mathfrak{S}_n$-orbits of the generators. For example, $I=I_{\{(8,0),(5,4)\}}=\langle x^8, x^5y^4, x^4y^5, y^8\rangle$. Not all $\mathfrak{S}_n$-invariant monomial ideals are $\GL_n(\k)$-invariant (e.g. $I_{\{(1,1)\}}=\langle xy\rangle\subseteq \k[x,y]$ is not $\GL_2(\k)$-invariant in any characteristic). The partitions appearing encode algebraic properties of the ideal. For instance, if $\lambda$ and $\mu$ are partitions with at most $n$ parts, then $I_{\lambda}\subseteq I_{\mu}$ in $\k[x_1,\cdots,x_n]$ if and only if $\lambda_i\leq \mu_i$ for $i=1,\cdots, n$. In \cite{raicu}, Castelnuovo--Mumford regularity of $\mathfrak{S}_n$-invariant monomial ideals is calculated using combinatorics of the partitions appearing as exponents.
\end{remark}

\subsection{Products, powers, and Frobenius powers of invariant ideals}\label{sec:powers}

In this section we investigate various operations on invariant ideals. We begin by considering products and powers.

\begin{prop}
Given two invariant ideals $I,J\subseteq S$, their product $IJ$ is also an invariant ideal. In particular, powers of invariant ideals are invariant.
\end{prop}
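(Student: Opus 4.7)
The plan is to use the fact that the $\GL_n(\k)$-action on $S$ is by ring automorphisms (changes of coordinates), hence compatible with multiplication. Concretely, for any $g \in \GL_n(\k)$ and any $f_1, f_2 \in S$ we have $g \cdot (f_1 f_2) = (g \cdot f_1)(g \cdot f_2)$, which is immediate from the definition of the action given in the introduction where each variable $x_i$ is sent to a linear form.

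First I would recall that the product $IJ$ is by definition the ideal generated by all products $fg$ with $f \in I$ and $g \in J$. To verify invariance of $IJ$, it suffices (as noted right before Example \ref{ex:somecarryideals}) to check that $g \cdot m$ lies in $IJ$ for every $g \in \GL_n(\k)$ and every element $m$ of a generating set. Taking $m = fh$ with $f \in I$, $h \in J$, the compatibility of the action with multiplication gives $g \cdot (fh) = (g\cdot f)(g\cdot h)$. Since $I$ and $J$ are invariant, $g \cdot f \in I$ and $g \cdot h \in J$, so their product lies in $IJ$. This proves that $IJ$ is $\GL_n(\k)$-invariant.

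The statement about powers then follows by a trivial induction on $k$: the base case $I^1 = I$ is given, and if $I^k$ is invariant then $I^{k+1} = I \cdot I^k$ is invariant by the first part applied with $J = I^k$.

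There is no genuine obstacle here; the only thing to keep clean is the observation that $\GL_n(\k)$ acts by $\k$-algebra automorphisms, so it suffices to verify invariance on a generating set of $IJ$ rather than on arbitrary elements. In Section \ref{sec:powers} this result can then immediately be combined with the Classification of Invariant Ideals (Theorem \ref{structure theorem}) to see that products and powers of sums of carry ideals are again sums of carry ideals, setting up the subsequent investigation of Frobenius powers.
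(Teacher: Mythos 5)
Your proof is correct and follows essentially the same route as the paper: both arguments reduce to checking invariance on the products of generators and then use that the $\GL_n(\k)$-action is multiplicative, so $g\cdot(fh)=(g\cdot f)(g\cdot h)\in IJ$. If anything, your version is slightly cleaner, since you only need $g\cdot f\in I$ and $g\cdot h\in J$ rather than expanding these as explicit linear combinations of the chosen generators as the paper does.
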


\begin{proof}
Write $I=\langle m_1,\cdots,m_s\rangle$ and $J=\langle m_1',\cdots, m_t'\rangle$ where the $m_i$'s and $m_j'$'s are monomials. Then
$$
IJ=\langle m_im_j'\mid 1\leq i\leq s, \; 1\leq j\leq t\rangle.
$$
We want to show that $g\cdot m_im_j'\in I$ for all $g\in \GL_n(\k)$ and all $i,j$. Since $I$ and $J$ are invariant, we have that
$$
g\cdot m_i=a_{i,1}m_1+\cdots+a_{i,s}m_s,\quad g\cdot m_j'=a_{j,1}'m_1'+\cdots+a_{j,t}'m_t',
$$
for some $a_{i,k},a_{j,l}'\in \k$. So 
$$
g\cdot m_im_j'=(g\cdot m_i)(g\cdot m_j')=\sum_{k,l}a_{i,k}a_{j,l}'m_km_l'.
$$
In particular, $g\cdot m_im_j'\in IJ$, as required to complete the proof.
\end{proof}

Let $x^{\underline{b}}$ and $x^{\underline{b'}}$ be two monomials. Since it is often the case that $c(\underline{b+b'})\neq c(\underline{b})+c(\underline{b'})$, there is no obvious formula for the carry patterns that appear in a product of invariant ideals in terms of the carry patterns of the constituents. 

\begin{ex}
Below are some examples of products and powers of invariant ideals.
\begin{enumerate}
    \item If $n=3$, $p=3$, and $I=I_{(0),1}=\langle x,y,z\rangle$, then $I^3=\langle x^3,x^2y,x^2z,xy^2,xyz,xz^2,y^2z,yz^2, z^3\rangle=I_{(1,0),3}$.

    \item Let $n=2$, $p=7$, $I=I_{(0),7}=\langle x^7,y^7\rangle$, $J=I_{(0),3}=\langle x^3, x^2y, xy^2, y^3\rangle$. Then
    $$
IJ=\langle x^{10},x^9y, x^8y^2, x^7y^3, x^3y^7,x^2y^8,xy^9,y^{10}\rangle=I_{(0),10}.
    $$
\end{enumerate}

\end{ex}

Next, we investigate the implications of the Frobenius endomorphism on the invariant ideals. Let $I = \langle f_1,\dots,f_r\rangle$ be an ideal of $S$ and let $e\geq 1$ be an integer. The $p^e$-th \defi{Frobenius power} of $I$ is the ideal $I^{[p^e]} =  \langle f_1^{p^e},\dots,f_r^{p^e} \rangle$. The next result asserts that, unlike products and powers, Frobenius powers play nicely with carry patterns. Given $c\in C(d,n,p)$ and $e\geq 1$ we define
\begin{equation}
c^{p^e}=(\underbrace{0,\cdots, 0}_{e},c_1, \cdots,c_M)\in C(dp^e,n,p).    
\end{equation}
Observe that if $x^{\underline{b}}=x_1^{b_1}\cdots x_n^{b_n}$ has carry pattern $c=c(\underline{b})$, then $x^{\underline{b}p^e}=x_1^{b_1p^e}\cdots x_n^{b_np^e}$ has carry pattern $c(\underline{b}p^e)=c^{p^e}$. As a consequence, we obtain the following result.

\begin{prop}
For $B\subseteq C(n,p)$ and $e\geq 1$ we have $(I_B)^{[p^e]}=I_{B^{[p^e]}}$ where
$$
B^{[p^e]}=\bigsqcup_{d\geq 0}\{c^{p^e} \mid c\in B_d\}.
$$
In particular, $(I_{c,d})^{[p^e]}=I_{c^{p^e},dp^e}$.
\end{prop}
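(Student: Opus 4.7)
The plan is to first reduce to the case of a single carry ideal, and then to prove the two containments separately. For the reduction, note that Frobenius commutes with sums of ideals in the strong sense that if $I=\langle f_1,\dots,f_r\rangle$ and $J=\langle g_1,\dots,g_s\rangle$, then $(I+J)^{[p^e]}=\langle f_1^{p^e},\dots,g_s^{p^e}\rangle=I^{[p^e]}+J^{[p^e]}$. Together with Definition \ref{def:invarianti} this gives
\[
(I_B)^{[p^e]}=\sum_{d\geq 0}\sum_{c\in B_d}(I_{c,d})^{[p^e]},
\]
so once the displayed special case $(I_{c,d})^{[p^e]}=I_{c^{p^e},\,dp^e}$ is established, the general statement follows by summing.

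For the containment $(I_{c,d})^{[p^e]}\subseteq I_{c^{p^e},dp^e}$, I would take any monomial generator $x^{\underline{b}}$ of $I_{c,d}$ (so $\deg x^{\underline{b}}=d$ and $c(\underline{b})\leq c$) and apply the observation stated just before the proposition: $x^{\underline{b}p^e}$ has degree $dp^e$ and carry pattern $c(\underline{b})^{p^e}$. Since appending leading zeros preserves componentwise inequalities, $c(\underline{b})^{p^e}\leq c^{p^e}$, so $(x^{\underline{b}})^{p^e}=x^{\underline{b}p^e}$ is a generator of $I_{c^{p^e},dp^e}$.

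The reverse containment $I_{c^{p^e},dp^e}\subseteq (I_{c,d})^{[p^e]}$ is the substantive part. Let $x^{\underline{b'}}$ be a monomial generator of $I_{c^{p^e},dp^e}$, so $\sum_i b'_i=dp^e$ and $c(\underline{b'})\leq c^{p^e}$. The key claim is that each $b'_i$ is divisible by $p^e$. I would prove this by induction on $\ell=0,1,\dots,e-1$, showing that $b'_{i,\ell}=0$ for all $i$. Using the defining equation \eqref{carry pattern} for $c_{\ell+1}(\underline{b'})$, together with the facts that $c_{\ell+1}(\underline{b'})=0$ (the first $e$ entries of $c^{p^e}$ are zero), $(dp^e)_{\ell}=0$ for $\ell<e$, and $c_{\ell}(\underline{b'})=0$ by induction, one gets $\sum_i b'_{i,\ell}=0$, forcing $b'_{i,\ell}=0$ for all $i$. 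Writing $\underline{b'}=p^e\underline{b}$, the monomial $x^{\underline{b}}$ has degree $d$, and its carry pattern is obtained from $c(\underline{b'})$ by deleting the first $e$ (zero) entries; thus $c(\underline{b})\leq c$, so $x^{\underline{b}}$ generates $I_{c,d}$ and $x^{\underline{b'}}=(x^{\underline{b}})^{p^e}\in(I_{c,d})^{[p^e]}$.

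The main obstacle is the inductive divisibility argument in the previous paragraph: one must carefully unwind the carry equations \eqref{carry pattern} to extract digitwise vanishing from the vanishing of the first $e$ carry entries. Everything else is bookkeeping with the definitions of $c^{p^e}$ and $B^{[p^e]}$ and the distributivity of Frobenius over sums.
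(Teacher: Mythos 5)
Your proposal is correct and follows the same route the paper intends: the paper states the proposition as an immediate consequence of the observation that $x^{\underline{b}p^e}$ has carry pattern $c(\underline{b})^{p^e}$, omitting the proof entirely. Your write-up supplies the details that observation leaves implicit — in particular the reverse containment, where the vanishing of the first $e$ carries together with $(dp^e)_j=0$ for $j<e$ forces every exponent of a generator of $I_{c^{p^e},dp^e}$ to be divisible by $p^e$ — and that argument is sound.
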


\begin{ex}
Below are a couple examples involving Frobenius powers.
\begin{enumerate}
    \item In Example \ref{ex:somecarryideals}(2), we had $p=5$ and $I_{c,d}=I_{(1),5}^{[5]}$.

    \item Let $a_1,\cdots,a_s, e_1,\cdots, e_s\geq 1$. Since $\m=\langle x_1,\cdots, x_n\rangle$ is an invariant ideal, so is
    $$
I=(\m^{a_1})^{[p^{e_1}]}(\m^{a_2})^{[p^{e_2}]}\cdots (\m^{a_s})^{[p^{e_s}]}.
    $$
    In Section \ref{gens2} we will see that all carry ideals in two variables are of this form.
\end{enumerate}

\end{ex}

\subsection{Containment of carry ideals}\label{sec:contain} In this subsection, we investigate the following problem.

\begin{prob}\label{containprob}
Let $c\in C(d,n,p)$ and let $d'\geq d$. Characterize $c'\in C(d',n,p)$ for which $I_{c,d}\supseteq I_{c',d'}$.   
\end{prob}

A solution to this problem would allow us to remove all redundancy in the definition of the ideals $I_B$ for $B\subseteq C(n,p)$ (see Definition \ref{def:invarianti}), and obtain a one-to-one correspondence between $\GL_n(\k)$-invariant ideals in $\k[x_1,\cdots,x_n]$ and certain subsets of $C(n,p)$.

If $d'=d$, then by (\ref{eq:containsamedegree}) we have $I_{c,d}\supseteq I_{c',d'}$ if and only if $c\geq c'$. On the other hand, for $d'>d$, this is no longer the case: we may have $c'\nleq c$ but $I_{c',d'}\subseteq I_{c,d}$, as demonstrated by the following example.

\begin{ex}
If $d=1$ and $c=(0,\cdots,0)$, then $I_{c,d}=\langle x_1,\cdots, x_n\rangle$, so $I_{c,d}\supseteq I_{c',d'}$ for all pairs $(c',d')$, but there are $c'>(0,\cdots,0)$. For instance, $(1)\in C(2,2,2)$.

\end{ex}

Using notation from Section \ref{sec:rep}, we are interested in the image of the multiplication map
\begin{equation}\label{eq:multmap}
S_{d'-d}\otimes T_{c,d}\longrightarrow S_{d'}.
\end{equation}
Indeed, $I_{c,d}\supseteq I_{c',d'}$ if and only if $T_{c',d'}$ is a subrepresentation of the image of (\ref{eq:multmap}). An important step in determining the image of this multiplication map is to understand the case $d'-d=1$, as one could then use induction to address the general case. Thus, we are interested in the carry patterns of $x_ix^{\underline{b}}$ for $x^{\underline{b}}\in T_{c,d}$.

To state our main theorem in this direction, we introduce some notation. We recall from (\ref{eq:floor}) that for an integer $a\geq 0$ with base-$p$ expansion $a=(a_0,\cdots, a_N)$, we write $\fl{a}=\min\{j\mid a_j\neq p-1\}$. We continue to write $d=(d_0,\cdots, d_M)$ for the base-$p$ expansion of $d$, with $d_M\neq 0$.  Given $c\in C(d,n,p)$, consider any monomial \(x^{\underline b} = x_1^{b_1}\cdots x_n^{b_n}\) of degree \(d\) with carry pattern \(c\), and write $b_i=(b_{i0},\cdots ,b_{iM})$ for the base-$p$ expansion of $b_i$ for $1\leq i \leq n$. By the definition of carry pattern (\ref{carry pattern}) we have for each \(0\leq j\leq M\): 
\begin{equation}
    d_j + pc_{j+1} - c_j = b_{1j}+\dots + b_{nj}\leq n(p-1). 
    \end{equation}
As such, we define $c^{\sharp}$ as follows
\begin{equation}
c^\sharp := \min\{0\leq j\leq M|d_j + pc_{j+1} - c_j < n(p-1)\}.    
\end{equation}

\begin{theorem}\label{thm:multmap}
Let $c\in C(d,n,p)$. The image of the multiplication map \(S_1\otimes T_{c,d}\to S_{d+1}\) is \(T_{c^1,d+1}\), where \(c^1\in C(d+1,n,p)\) is defined by 
$$
c^1=(c_1,\cdots,c_M,0)+(\underbrace{1,\cdots, 1}_{\fl{d}},\underbrace{0,\cdots,0}_{M+1-\fl{d}})-(\underbrace{1,\cdots, 1}_{c^\sharp}, \underbrace{0\cdots, 0}_{M+1-c^\sharp}).
$$
\end{theorem}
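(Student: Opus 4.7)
The plan is to use Theorem \ref{carry multiply x_i induction}, which describes $c(x_ix^{\underline{b}})$ in terms of $c(\underline{b})$, $\fl{b_i}$, and $\fl{d}$. The image of $S_1\otimes T_{c,d}\to S_{d+1}$ is automatically a $\GL_n(\k)$-subrepresentation of $S_{d+1}$ (as the image of a $\GL_n(\k)$-equivariant map), so by Theorem \ref{carry-submodule correspondence} it equals $T_{B,d+1}$ for some order-closed $B\subseteq C(d+1,n,p)$. To prove $B=\{c'\in C(d+1,n,p):c'\leq c^1\}$ I would split the argument into (a) exhibiting a product $x_ix^{\underline{b}}$ with $c(\underline{b})\leq c$ whose carry pattern is exactly $c^1$, so that $c^1\in B$, and (b) showing every such product satisfies $c(x_ix^{\underline{b}})\leq c^1$.

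For (a), I would choose any $\underline{b}$ realizing $c(\underline{b})=c$ (which exists since $c\in C(d,n,p)$). The defining equations \eqref{carry pattern}, together with the fact that $c^\sharp$ is the \emph{smallest} index violating $d_j+pc_{j+1}-c_j=n(p-1)$, force $\sum_i b_{i,j}=n(p-1)$ for every $j<c^\sharp$, and hence $b_{i,j}=p-1$ for all $i$ and all $j<c^\sharp$. At $j=c^\sharp$ the sum is strictly less than $n(p-1)$, so at least one index $i$ has $b_{i,c^\sharp}<p-1$, i.e.\ $\fl{b_i}=c^\sharp$. Feeding this pair $(\underline{b},i)$ into Theorem \ref{carry multiply x_i induction} produces a monomial whose carry pattern equals the stated formula for $c^1$ in both cases $c^\sharp\leq\fl{d}$ and $c^\sharp\geq\fl{d}$; this simultaneously verifies $c^1\in C(d+1,n,p)$.

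For (b), I would take any $\underline{b}$ with $c(\underline{b})=c'\leq c$ and any $i$ with $t:=\fl{b_i}$, and compare $c(x_ix^{\underline{b}})$ to $c^1$ coordinate by coordinate. After applying Theorem \ref{carry multiply x_i induction} and splitting on the relative orderings of $t$, $\fl{d}$, and $c^\sharp$, a routine check using $c'\leq c$ collapses every case to the single inequality: \emph{if $t<c^\sharp$, then $c_\ell-c'_\ell\geq 1$ for every $\ell\in(t,c^\sharp]$}. This is the main obstacle. To handle it, set $\delta_\ell:=c_\ell-c'_\ell\geq 0$. The identity $\sum_i b_{i,j}=d_j+pc'_{j+1}-c'_j\leq n(p-1)$ together with the equality $d_j+pc_{j+1}-c_j=n(p-1)$ for $j<c^\sharp$ yields $p\delta_{j+1}\geq\delta_j$ throughout that range; at $j=t$ the strict inequality $b_{i,t}\leq p-2$ sharpens this to $p\delta_{t+1}\geq\delta_t+1\geq 1$, so $\delta_{t+1}\geq 1$ by integrality. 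Induction on $\ell$ — if $\delta_{\ell-1}\geq 1$ then $p\delta_\ell\geq 1$ forces the integer $\delta_\ell$ to be $\geq 1$ — propagates the bound to every $\ell\in(t,c^\sharp]$, completing the proof.
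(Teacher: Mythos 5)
Your proof is correct, and its skeleton matches the paper's: both deduce the theorem from the multiplication formula of Theorem \ref{carry multiply x_i induction}(1), note that the image is a $\GL_n(\k)$-submodule and hence equals $T_B$ for an order-closed $B$ by Theorem \ref{carry-submodule correspondence}, and then show that $c^1$ is attained and bounds $B$ from above. Your step (a) is essentially the paper's proof of Theorem \ref{carry multiply x_i induction}(2). Where you genuinely diverge is step (b): the paper proves the upper bound by forming $(c')^1$ for each $c'<c$ via the auxiliary invariant $(c')^\sharp$ and then showing $(c')^1\leq c^1$ through a case analysis on the first index where $c'$ and $c$ disagree and the first subsequent index where they agree again (its Theorem \ref{carry multiply x_i induction}(3)). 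You instead bound each individual product $x_ix^{\underline{b}}$ directly, reducing all cases to the inequality $c_\ell-c'_\ell\geq 1$ for $\ell\in(t,c^\sharp]$ when $t<c^\sharp$, and proving it by the recursion $p\delta_{j+1}\geq\delta_j$ for $j<c^\sharp$ (strict at $j=t$), extracted from the column sums $\sum_i b_{i,j}=d_j+pc'_{j+1}-c'_j\leq n(p-1)$. Your route is more uniform: it treats $c'=c$ and $c'<c$ together (when $c'=c$ the recursion yields $0\geq 1$, showing the hypothesis $t<c^\sharp$ cannot occur, which recovers the minimality statement in part (2)), and it never needs to introduce or compare $(c')^\sharp$ with $c^\sharp$. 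The paper's route has the advantage of isolating the monotonicity $c'<c\Rightarrow (c')^1\leq c^1$ as a reusable fact about the operation $c\mapsto c^1$ on carry patterns. Both arguments are complete.
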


Theorem \ref{thm:multmap} is a consequence of the following (stronger) technical result.

\begin{theorem}\label{carry multiply x_i induction} Let $c=(c_1,\cdots,c_M)$ be an element of $C(d,n,p)$.
\begin{enumerate}
    \item Let $x^{\underline{b}}=x_1^{b_1}\cdots x_n^{b_n}$ be a monomial of degree $d$ with carry pattern $c$. The carry pattern of $x_ix^{\underline{b}}$ is
$$
c(x_ix^{\underline{b}})=(c_1,\cdots,c_M,0)+(\underbrace{1,\cdots, 1}_{\fl{d}},\underbrace{0,\cdots,0}_{M+1-\fl{d}})-(\underbrace{1,\cdots, 1}_{\fl{b_i}}, \underbrace{0\cdots, 0}_{M+1-\fl{b_i}}).
$$

\item Using notation as in the statement of Theorem \ref{thm:multmap}, $c^1$ is the maximal element in $C(d+1,n,p)$ obtained by multiplying a monomial of degree $d$ and carry pattern $c$ by a variable.

\item If \(c'\in C(d,n,p)\) with \(c'< c\), then \(c'^1\leq c^1\) in \(C(d+1,n,p)\).
\end{enumerate}
\end{theorem}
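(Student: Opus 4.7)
The plan is to prove the three parts in order. Theorem \ref{thm:multmap} is then immediate: (2) identifies $c^1$ as the componentwise maximum of the carry patterns of products $x_i x^{\underline{b}}$ (over all monomials $x^{\underline{b}}$ with $c(\underline{b}) = c$ and all $i$), which is itself a carry pattern by Proposition \ref{prop:lcm is join}; combined with (3) and Theorem \ref{carry-submodule correspondence}, this identifies the image of $S_1 \otimes T_{c,d} \to S_{d+1}$ with $T_{c^1, d+1}$.

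For part (1), set $\underline{b}' := (b_1,\ldots,b_i+1,\ldots,b_n)$ and $d' := d+1$. The key base-$p$ observation is that incrementing $b_i$ by $1$ flips the digits of $b_i$ in positions $0,\ldots,\fl{b_i}-1$ from $p-1$ to $0$ and raises the digit at position $\fl{b_i}$ by $1$, with the analogous statement for $d \mapsto d+1$ using $\fl{d}$. Substituting $\underline{b}'$ and $d'$ into equation (\ref{carry pattern}) and subtracting the original equation for $c(\underline{b})$, the change in the left-hand side indexed by $\ell$ equals $-(p^\ell - 1)$ when $\ell \leq \fl{b_i}$ and $+1$ when $\ell > \fl{b_i}$, and symmetrically on the right-hand side using $\fl{d}$. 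Dividing by $p^\ell$ in each of the four cases produced by comparing $\ell$ to $\fl{b_i}$ and $\fl{d}$ produces exactly the stated formula; the boundary case $d = p^{M+1}-1$, where the carry pattern gains one entry, is handled by the same calculation.

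For part (2), Lemma \ref{lemma: two conditions on c} gives that the column sum $\sum_{k=1}^n b_{k,j}$ equals the constant $d_j + pc_{j+1} - c_j$ for every $\underline{b}$ with carry pattern $c$. Thus a digit $b_{i,j}$ can be strictly less than $p-1$ for some choice of $(\underline{b}, i)$ precisely when $d_j + pc_{j+1} - c_j < n(p-1)$, giving $\min \fl{b_i} = c^\sharp$ as $(\underline{b}, i)$ varies. Since the formula from (1) is pointwise nonincreasing in $\fl{b_i}$, the componentwise maximum of $c(x_i x^{\underline{b}})$ is achieved by a choice with $\fl{b_i} = c^\sharp$, and equals the stated $c^1$.

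For part (3), set $\delta_j := c_j - c'_j \geq 0$ (with $\delta_0 = 0$). Applying (1) to both $c$ and $c'$ yields
\[
c^1_\ell - c'^1_\ell \;=\; \delta_\ell \;-\; \big([\ell \leq c^\sharp] - [\ell \leq c'^\sharp]\big),
\]
so it suffices to show (a) $c'^\sharp \leq c^\sharp$ and (b) $\delta_\ell \geq 1$ for $c'^\sharp < \ell \leq c^\sharp$. Both come from the identity $p\delta_{j+1} - \delta_j = (\text{column sum of } c \text{ at } j) - (\text{column sum of } c' \text{ at } j)$: at positions where both sums equal $n(p-1)$ we get $p\delta_{j+1} = \delta_j$, which starting from $\delta_0 = 0$ telescopes to $\delta_j = 0$ for $j \leq \min(c^\sharp, c'^\sharp)$. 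For (a), assuming $c^\sharp < c'^\sharp$ forces $\delta_{c^\sharp} = 0$, but the comparison at $j = c^\sharp$ (the $c$-sum is strictly smaller than the $c'$-sum of $n(p-1)$) yields $p\delta_{c^\sharp+1} < \delta_{c^\sharp} = 0$, contradicting $\delta_{c^\sharp+1} \geq 0$. For (b), the comparison at $j = c'^\sharp$ gives $p\delta_{c'^\sharp+1} > 0$, hence $\delta_{c'^\sharp+1} \geq 1$; iterating via $p\delta_{j+1} \geq \delta_j$ for $c'^\sharp < j < c^\sharp$ (valid because the $c$-sum at $j$ is $n(p-1)$) preserves $\delta_{j+1} \geq 1$. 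The main obstacle is recognizing the $\delta_j$-recursion and carefully tracking which column sums are maximal near the two $\sharp$-values; once isolated, the arithmetic is elementary.
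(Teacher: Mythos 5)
Your proposal is correct. Parts (1) and (2) follow essentially the paper's route: part (1) rests on the same subtraction of the defining equations (\ref{carry pattern}) for $\underline{b}$ and for $\underline{b}$ with $b_i$ incremented, except that you compute the total change $-(p^\ell-1)$ versus $+1$ in each partial sum globally and divide by $p^\ell$, whereas the paper isolates the one-step recursion $p(c'_k-c_k)=\cdots+(c'_{k-1}-c_{k-1})$ and runs an induction on $k$ in three cases according to the relative order of $\fl{b_i}$ and $\fl{d}$; your version is a shade more direct but carries the same content. Part (2) is identical to the paper's: the column sums $d_j+pc_{j+1}-c_j$ determine when a digit can be made $<p-1$, so the minimal achievable $\fl{b_i}$ is $c^\sharp$, and the formula from (1) is componentwise decreasing in $\fl{b_i}$. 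Part (3) is where you genuinely diverge: the paper locates the first index $i$ with $c'_i<c_i$ and the first index $k\geq i+1$ with $c'_k=c_k$ and argues about the window between them, while you introduce $\delta_j=c_j-c'_j$ and exploit the uniform recursion $p\delta_{j+1}=\delta_j+\sigma_j-\sigma'_j$ (with $\sigma_j,\sigma'_j$ the column sums). This buys a cleaner, case-free derivation of both $c'^\sharp\leq c^\sharp$ and $\delta_\ell\geq 1$ on the interval $c'^\sharp<\ell\leq c^\sharp$, which is exactly the inequality needed; the paper's version makes the same facts visible but only after choosing the auxiliary indices $i$ and $k$. One could even note that when $c'^\sharp>c^\sharp$ the desired inequality is automatic, so your claim (a) is slightly more than is strictly required, but it is true and your proof of it is sound.
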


\begin{proof} (1) Let $c'=c(x_ix^{\underline{b}})$, and for each $1\leq j\leq n$ let $b_j=(b_{j,0},\cdots,b_{j,M})$ denote the base-$p$ expansion of $b_j$. The defining equations (\ref{carry pattern}) of carry patterns give the following relations for $1\leq k \leq M+1$:
$$
        \sum_{j=1}^n b_{j,k-1} = d_{k-1} + pc_{k} - c_{k-1},\quad \sum_{j=1}^n b_{j,k-1}-b_{i,k-1}+(b_i+1)_{k-1}=(d+1)_{k-1}+pc_k'-c'_{k-1}.
        $$
Taking the difference of these two expressions, and isolating $p(c_k'-c_k)$, we obtain the following:
\begin{equation}\label{carrydifference}
p(c_k'-c_k)=((b_i+1)_{k-1}-b_{i,k-1})+(d_{k-1}-(d+1)_{k-1})+(c'_{k-1}-c_{k-1}).   
\end{equation}
For ease of notation, we set $l=\fl{b}$ and $m=\fl{d}$, so that $b_i$ and $b_i+1$ have base-$p$ expansion:
$$
b_i=(\underbrace{p-1,\cdots,p-1}_{l-1},b_{i,l},\cdots,b_{i,M}),\quad b_i+1=(\underbrace{0,\cdots,0}_{l-1},b_{i,l}+1,b_{i,l+1},\cdots, b_{i,M}),
$$
and similarly, $d$ and $d+1$ have base-$p$ expansion
$$
d=(\underbrace{p-1,\cdots,p-1}_{m-1},d_{m},\cdots,d_{M}),\quad d+1=(\underbrace{0,\cdots,0}_{m-1},d_{m}+1,d_{m+1},\cdots, d_{M}).
$$
We break the proof into three cases, depending on whether $m >l$, $m=l$, or $m<l$. 

\textit{Case 1.} Assume $m> l$, so we wish to show that
$$
c'=(c_1,\cdots,c_M,0)+(\underbrace{0,\cdots,0}_{l},\underbrace{1,\cdots, 1}_{m-l},\underbrace{0,\cdots,0}_{M+1-m}).
$$
We induct on $k\geq 0$ to show that $c'_k$ is as claimed. The base case $c_0'=0$ is vacuous. If $1\leq k\leq l$, then by (\ref{carrydifference}) we have $p(c_k'-c_k)=(1-p)+(p-1)+(c_{k-1}'-c_{k-1})$, so by the inductive hypothesis ($c_{k-1}'-c_{k-1}=0$) it follows that $c_k'=c_k$. Next, if $k=l+1$, then since $m>l$ we have that  (\ref{carrydifference}) gives $p(c_k'-c_k)=1+(p-1)+(c_{k-1}'-c_{k-1})$, so by the inductive hypothesis ($c_{k-1}'-c_{k-1}=0$) we conclude that $c_k'-c_k=1$. If $l+2\leq k\leq m$ then by (\ref{carrydifference}) we have $p(c_k'-c_k)=0+(p-1)+(c_{k-1}'-c_{k-1})$, so by the inductive hypothesis ($c_{k-1}'-c_{k-1}=1$) we obtain $c_k'-c_k=1$. Next, if $k=m+1$ then we have that $p(c_k'-c_k)=0+(-1)+(c_{k-1}'-c_{k-1})$, so by the inductive hypothesis ($c_{k-1}'-c_{k-1}=1$) we conclude that $c_k'=c_k$. Finally, if $k\geq m+2$ then by (\ref{carrydifference}) we have $p(c_k'-c_k)=0+0+(c_{k-1}'-c_{k-1})$, so by the inductive hypothesis ($c_{k-1}'-c_{k-1}=0$) we have that $c'_k=c_k$.

\textit{Case 2.} Assume $m=l$, so we wish to show that $c=c'$. Again, we proceed by induction on $k\geq 0$, the base case $k=0$ being vacuous. If $1\leq k\leq m$, then by (\ref{carrydifference}) we have $p(c_k'-c_k)=(1-p)+(p-1)+(c_{k-1}'-c_{k-1})$, so by the inductive hypothesis ($c_{k-1}'-c_{k-1}=0$) it follows that $c_k'=c_k$. Next, if $k=m+1$, then since $m=l$ we have that  (\ref{carrydifference}) gives $p(c_k'-c_k)=1+(-1)+(c_{k-1}'-c_{k-1})$, so by the inductive hypothesis ($c_{k-1}'-c_{k-1}=0$) we conclude that $c_k'=c_k$. Finally, if $k>m$ then by (\ref{carrydifference}) we obtain that $p(c_k'-c_k)=0+0+(c_{k-1}'-c_{k-1})$, so by the inductive hypothesis ($c_{k-1}'-c_{k-1}=0$) it follows that $c_k'=c_k$. 

\textit{Case 3.} Assume $m<l$, so we wish to show that
$$
c'=(c_1,\cdots,c_M,0)+(\underbrace{0,\cdots,0}_{m},\underbrace{-1,\cdots, -1}_{l-m},\underbrace{0,\cdots,0}_{M+1-l}).
$$
We proceed by induction on $k\geq 0$ as in the previous cases. If $1\leq k\leq m$, then by (\ref{carrydifference}) we have $p(c_k'-c_k)=(1-p)+(p-1)+(c_{k-1}'-c_{k-1})$, so by the inductive hypothesis ($c_{k-1}'-c_{k-1}=0$) it follows that $c_k'=c_k$. Next, if $k=m+1$, then since $m<l$ we have that  (\ref{carrydifference}) gives $p(c_k'-c_k)=(1-p)+(-1)+(c_{k-1}'-c_{k-1})$, so by the inductive hypothesis ($c_{k-1}'-c_{k-1}=0$) we conclude that $c_k'-c_k=-1$. If $m+2\leq k\leq l$ then by (\ref{carrydifference}) we have $p(c_k'-c_k)=(1-p)+0+(c_{k-1}'-c_{k-1})$, so by the inductive hypothesis ($c_{k-1}'-c_{k-1}=-1$) we obtain $c_k'-c_k=-1$. Next, if $k=l+1$ then we have that $p(c_k'-c_k)=1+0+(c_{k-1}'-c_{k-1})$, so by the inductive hypothesis ($c_{k-1}'-c_{k-1}=-1$) we conclude that $c_k'=c_k$. Finally, if $k\geq l+2$ then by (\ref{carrydifference}) we have $p(c_k'-c_k)=0+0+(c_{k-1}'-c_{k-1})$, so by the inductive hypothesis ($c_{k-1}'-c_{k-1}=0$) we have that $c'_k=c_k$.

   (2) By part (1), it suffices to show that \(c^\sharp\) equals the minimal \(b_i^\circ\) we can get from some monomial \(x^{\underline b} = x_1^{b_1}\cdots x_n^{b_n}\) of degree \(d\) with carry pattern \(c\). Since carry patterns are invariant under permutations of the variables, it suffices to minimize \(b_1^\circ\), so we want to show that \(c^\sharp\) is equal to the number
    \[
    \min\{0\leq j\leq M|\;\underline{b}\in \mathbb{Z}^n_{\geq 0},\;\deg(x^{\underline b}) = d,\; c(x^{\underline{b}}) = c,\; b_{1j}\neq p-1\}.
    \]
Let $\underline{b}\in \mathbb{Z}^n_{\geq 0}$, and write $b_i=(b_{i0},\cdots ,b_{iM})$ for the base-$p$ expansion of $b_i$ for $1\leq i \leq n$. Then $x^{\underline{b}}$ has degree $d$ and carry pattern $c$ if and only if for each \(0\leq j\leq M\), we have
\begin{equation}\label{keyeq}
    d_j + pc_{j+1} - c_j = b_{1j}+\dots + b_{nj}. 
    \end{equation}
By (\ref{keyeq}) we can choose \(b_{1j}\) to be less than \(p-1\) if and only if \(d_j + pc_{j+1}-c_j\neq n(p-1)\). Therefore, the minimal \(b_1^\circ\) is equal to \(c^\sharp\).

    (3)  Let $c'\in C(d,n,p)$ with $c'<c$ and let \(i\) be the first index where \(c'_i<c_i\). Since
    \begin{equation}\label{keyeq2}
    d_{i-1} + pc'_i - c'_{i-1}< d_{i-1} + pc_i - c_{i-1}\leq n(p-1), 
    \end{equation}
    we know \(0\leq (c')^\sharp \leq i-1\). 
    
    Suppose \((c')^\sharp<i-1\). For \(0\leq j\leq i-2\), we have
    \[
    d_j + pc'_{j+1} - c'_j = d_j + pc_{j+1} - c_{j}.
    \]
    So in this case, \((c')^\sharp = c^\sharp\), and by part (2) we are done.

    Suppose now that \((c')^\sharp = i-1\). By (\ref{keyeq2}) we have \(c^\sharp \geq (c')^\sharp = i-1\), so by part (2) it suffices to show that
    \[
    (c'_1,\cdots,c'_M,0)-(\underbrace{1,\cdots, 1}_{(c')^\sharp}, \underbrace{0,\cdots, 0}_{M+1-(c')^\sharp}) \leq (c_1,\cdots,c_M,0)-(\underbrace{1,\cdots, 1}_{c^\sharp}, \underbrace{0,\cdots, 0}_{M+1-c^\sharp}).
    \]
    Rearranging terms, this is equivalent to
    \[
    (c_i-c'_i,\dots,c_M - c'_M,0)\geq (\underbrace{1,\cdots, 1}_{c^\sharp-(i-1)}, \underbrace{0\cdots, 0}_{M+1-c^\sharp}).
    \]
    Let $k$ be the first index with \(i+1\leq k\leq M+1\) and \(c'_k = c_k\). We need to show that $k-i\geq c^\sharp - (i-1)$, or equivalently, $k - 1 \geq c'^\sharp$. Since $c_k'=c_k$, by minimality of $k$ we have
    $$
    d_{k-1} + pc_k - c_{k-1} = d_{k-1} + pc'_k - c_{k-1} < d_{k-1} + pc'_k - c'_{k-1}\leq n(p-1).
    $$
    Thus, \(c^\sharp  \leq k-1\), as claimed.
\end{proof}

\begin{ex}
Let $p=7$ and $d=440=(6,6,1,1)$, so that $\fl{d}=2$. We consider $x^{342}y^{48}z^{50}\in \k[x,y,z]_d$, which has carry pattern $c=(1,1,1)$, as $342=(6,6,6,0)$, $48=(6,6,0,0)$, and $50=(1,0,1,0)$. Since $\fl{342}=3$, $\fl{48}=2$, and $\fl{50}=0$, Theorem \ref{carry multiply x_i induction}(1) asserts that
$$
c(x^{343}y^{48}z^{50})=(1,1,1)+(1,1,0)-(1,1,1)=(1,1,0),\quad  c(x^{342}y^{49}z^{50})=(1,1,1)+(1,1,0)-(1,1,0)=(1,1,1),
$$
$$
c(x^{342}y^{48}z^{51})=(1,1,1)+(1,1,0)-(0,0,0)=(2,2,1),
$$
which can be verified directly using (\ref{carry pattern}) and the exponents involved.
\end{ex}

\subsection{Carry ideals in two variables}\label{gens2} In this section we set $n=2$, so that $S=\k[x,y]$. In this case, the degree $d$ of a monomial and the power on $x$ determine the monomial uniquely, as any degree $d$ monomial is of the form $x^ay^{d-a}$. As a consequence, we have more control on the possible carry patterns in $C(d,2,p)$. 

For the following statement, we recall  $\fl{d}=\min\{j\mid d_j\neq p-1\}$ and $M=\max\{j\mid d_j\neq 0\}$, where $d=(d_0,\cdots,d_M)$ is the base-$p$ expansion of an integer $d\geq 0$.

\begin{prop}\label{prop: carry only 0 or 1}
The following is true about elements of $C(d,2,p)$:
\begin{enumerate}
    \item A sequence $c=(c_1,\cdots, c_M)$ belongs to $C(d,2,p)$ if and only if the following conditions hold:
    \begin{enumerate}[(i)]
        \item $c_i\in \{0,1\}$ for all $i=1,\cdots, M$,
        \item if $d_i=0$ for some $i$, then $c_{i+1}\geq c_i$,
        \item if $d_i=p-1$ for some $i$, then $c_i\geq c_{i+1}$.
    \end{enumerate}
    \item If $c\in C(d,2,p)$ then $c_1=\cdots=c_{\fl{d}}=0$.
    \item If $M\neq 0$ the maximal element in $C(d,2,p)$ is
    $$
    c=(c_1,\cdots, c_M)=(\underbrace{0,\cdots,0}_{\fl{d}},\underbrace{1,\cdots,1}_{M-\fl{d}}).
    $$
    \item If $M=0$ then $C(d,2,p)=\{(0)\}$.
\end{enumerate}    
\end{prop}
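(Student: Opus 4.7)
The plan is to apply Doty's criterion (Lemma \ref{lemma: two conditions on c}) specialized to \(n=2\), where the second inequality reads \(0 \leq d_i + pc_{i+1} - c_i \leq 2(p-1)\). The crucial feature is how restrictive the bound \(2(p-1)\) is compared to $np = 2p$; this small slack is what will force \(c_i \in \{0,1\}\), and once part (1) is established the remaining parts essentially fall out of it.

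For the forward direction of (1), I would prove (i) by induction on \(i\) with base \(c_0 = 0\): the inequality \(d_{i-1} + pc_i - c_{i-1} \leq 2(p-1)\) gives \(pc_i \leq 2(p-1) + c_{i-1}\), so if \(c_{i-1} \leq 1\) then \(pc_i \leq 2p - 1 < 2p\), forcing \(c_i \leq 1\). Granted (i), conditions (ii) and (iii) follow immediately: when \(d_i = 0\) the lower bound gives \(c_i \leq pc_{i+1}\), and since both values lie in \(\{0,1\}\) this forces \(c_{i+1} \geq c_i\); when \(d_i = p-1\), the upper bound yields \(pc_{i+1} \leq (p-1) + c_i\), forcing \(c_i \geq c_{i+1}\). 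For the converse, I would assume (i)--(iii) and verify both inequalities of Lemma \ref{lemma: two conditions on c}: the first inequality \(c_i \leq \sum_{j\geq i} d_j p^{j-i}\) is automatic since \(c_i \leq 1\) and \(d_M p^{M-i} \geq 1\) for \(i \leq M\), while the second reduces to a case analysis on whether \(d_i\) equals \(0\), lies in the middle range \(1 \leq d_i \leq p-2\) (where both bounds are slack enough to hold using only \(c_i, c_{i+1} \in \{0,1\}\)), or equals \(p-1\).

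Parts (2)--(4) are direct consequences. For (2), apply (iii) iteratively starting from \(c_0 = 0\): since \(d_j = p-1\) for \(j < \fl{d}\), we get \(0 = c_0 \geq c_1 \geq \cdots \geq c_{\fl{d}}\), forcing these entries to vanish. For (3), I would propose the candidate \(c = (0,\ldots,0,1,\ldots,1)\) with \(\fl{d}\) zeros followed by \(M - \fl{d}\) ones, and verify it satisfies (i)--(iii) by case analysis on whether the index \(i\) is below \(\fl{d}\), between \(\fl{d}\) and \(M\), or equal to \(M\), using \(d_{\fl{d}} \neq p-1\) and \(d_M \neq 0\) at the boundary cases; its maximality is then immediate from (i) combined with (2). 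Part (4) is trivial, since when \(M = 0\) the tuple \((c_1,\ldots,c_M)\) is empty.

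The main technical obstacle is the induction bounding \(c_i \leq 1\) in step (i); this is the step where the hypothesis \(n=2\) is used essentially, and it is exactly what fails for \(n\geq 3\), as Example \ref{ex:somecarryideals}(3) illustrates. Everything after this step is routine bookkeeping with the two conditions of Lemma \ref{lemma: two conditions on c}.
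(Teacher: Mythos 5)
Your proposal is correct and follows essentially the same route as the paper: both arguments specialize Doty's Lemma \ref{lemma: two conditions on c} to $n=2$, prove $c_i\in\{0,1\}$ by induction on $i$ from the bound $2(p-1)$ (the paper phrases this via the digits $a_i+b_i$ of an actual monomial, which is the same inequality), observe that given (i) the second Doty condition is equivalent to (ii) and (iii), and then deduce (2)--(4) by the same bookkeeping. The only difference is organizational — the paper folds part (2) into the same induction that proves (i), while you derive it afterwards from (iii) — which is immaterial.
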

\begin{proof}
First assume that $c\in C(d,2,p)$. We will show that $c_i\in\{0,1\}$ for all $i$, and verify assertion (2) that $c_1=\cdots=c_{\fl{d}}=0$. We prove by induction on $i\geq 0$ that $c_i$ is $0$ or $1$, and that $c_i=0$ for $i\leq \fl{d}$. The base case $c_0=0$ is vacuous. Now suppose as the inductive hypothesis that $c_i$ is $0$ or $1$ and that $c_i=0$ if $i\leq \fl{d}$. Let $x^ay^{d-a}\in S_d$ be a monomial with carry pattern $c$. For ease of notation we set $b=d-a$. Let $\sum_i^M a_ip^i$, $\sum_i^M b_ip^i$, and $\sum_i^M d_ip^i$ be the base-$p$ expansions for $a,b,d$ respectively. Since $c_i+a_i+b_i = d_i+c_{i+1}p$ and $a_i+b_i\leq 2p-2$, using the inductive hypothesis we have that $d_i+c_{i+1}p\leq 2p-1$. Since $d_i\geq 0$ we conclude that $c_{i+1}\leq \lfloor 2-1/p\rfloor =1$, so that $c_{i+1}\in \{0,1\}$. If $i+1\leq \fl{d}$ then $d_i=p-1$ and $c_i=0$ by inductive hypothesis, so $d_i+c_{i+1}p=p-1+c_{i+1}p\leq 2p-2$. Thus, $c_{i+1}p\leq p-1$ and $c_{i+1}=0$, as claimed.

Next, we complete the proof of (1). By Lemma \ref{lemma: two conditions on c} a sequence $c=(c_1,\cdots,c_M)$ belongs to $C(n,2,p)$ if and only if the following holds for all $i$:
$$
0\leq c_i\leq \sum_{k\geq i}d_kp^{k-i}\quad \textnormal{and}\quad 0\leq d_i+pc_{i+1}-c_i\leq 2(p-1).
$$
Since $c_i\in \{0,1\}$ and $d_M\neq 0$ the former condition always holds. The latter condition is equivalent to items (ii) and (iii) in the statement of Proposition \ref{prop: carry only 0 or 1}. Indeed, $d_i+pc_{i+1}-c_i$ is negative if and only if (ii) fails, and it is greater than $2(p-1)$ if and only if (iii) fails.

(3) Suppose $M\neq 0$. Since $d_{\fl{d}}\neq p-1$, items (ii) and (iii) do not prevent $c_{\fl{d}+1}$ from being $1$. If $c_{\fl{d}+1}=\cdots=c_M=1$, then items (ii) and (iii) are satisfied for $i\geq \fl{d}+1$. (4) If $M=0$ then $d\leq p-1$, so no carries can occur.
\end{proof}

\begin{cor}\label{cor:simple2}
The $\GL_2(\k)$-representation $S_d$ is irreducible if and only if one of the following holds:
\begin{enumerate}
    \item $d\leq p-1$,
    \item $d=rp^k-1$ for some $r,k\geq 1$.
\end{enumerate}
\end{cor}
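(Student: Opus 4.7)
The plan is to reduce the irreducibility question to a combinatorial one via Theorem \ref{carry-submodule correspondence}: since $\GL_2(\k)$-submodules of $S_d$ correspond bijectively to order-closed subsets of $C(d,2,p)$, irreducibility of $S_d$ is equivalent to the lattice of such subsets having only the two elements $\emptyset$ and $C(d,2,p)$, hence to $|C(d,2,p)|=1$. So the question becomes: for which pairs $(d,p)$ does the poset $C(d,2,p)$ have a unique element?

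To answer this, I would invoke Proposition \ref{prop: carry only 0 or 1} directly. Its parts (3)-(4) identify the maximum of $C(d,2,p)$ as the sequence $(\underbrace{0,\ldots,0}_{\fl{d}}, \underbrace{1,\ldots,1}_{M-\fl{d}})$ when $M\geq 1$ and as the lone element $(0)$ when $M=0$. Pairing with the minimum $(0,\ldots,0)$ from Corollary \ref{mincarry}, I conclude that $|C(d,2,p)|=1$ if and only if either $M=0$, or $M\geq 1$ with $M-\fl{d}\leq 0$. Unwinding the latter condition using the definition of $\fl{d}$, this says exactly that $d_0=d_1=\cdots=d_{M-1}=p-1$.

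The remaining step is to translate these digit conditions into the arithmetic statements in (1) and (2). The case $M=0$ is literally $d\leq p-1$, giving (1). For $M\geq 1$ with $d_0=\cdots=d_{M-1}=p-1$, one computes
\[
d \;=\; \sum_{j=0}^{M-1}(p-1)p^j + d_M p^M \;=\; (d_M+1)p^M - 1,
\]
so taking $k=M$ and $r=d_M+1$ realizes $d$ in the form of (2) with $r,k\geq 1$. For the converse, starting from $d=rp^k-1$, the base-$p$ borrowing identity $rp^k-1=(r-1)p^k+(p^k-1)$ forces $d_0=\cdots=d_{k-1}=p-1$, and one then reads off $\fl{d}\geq M$ after first reducing $(r,k)$ to the normalized range $1\leq r\leq p$ by absorbing extra digits of $r$ into a larger power of $p$. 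This reduction to the normal form is where care is needed: the pair $(r,k)$ is not unique, and only when $r$ is suitably bounded do the claimed digits of $d$ line up with the canonical base-$p$ expansion, so matching $(r,k)$ with the digit data of $d$ is the main (though routine) technical point; once this is handled, the equivalence with the condition $\fl{d}\geq M$ is immediate.
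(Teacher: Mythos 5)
Your overall route is exactly the paper's: reduce irreducibility of $S_d$ to $|C(d,2,p)|=1$ via Theorem \ref{carry-submodule correspondence}, then read off when the poset is a singleton from Proposition \ref{prop: carry only 0 or 1}. The digit-level characterization you arrive at --- $M=0$, or $M\geq 1$ with $d_0=\cdots=d_{M-1}=p-1$, equivalently $\fl{d}\geq M$ --- is correct and is the real mathematical content of the corollary.

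The gap is precisely the step you defer as ``routine'': the normalization of an arbitrary pair $(r,k)$ with $d=rp^k-1$ to one with $1\leq r\leq p$ by ``absorbing extra digits of $r$'' cannot be carried out in general. Take $p=2$ and $d=9=5\cdot 2^1-1$: the only ways to write $d+1=10$ as $r'2^{k'}$ are $10\cdot 2^0$ and $5\cdot 2^1$, neither with $r'\leq 2$; and indeed $9$ has base-$2$ expansion $(1,0,0,1)$, so $\fl{9}=1<M=3$ and $C(9,2,2)$ contains $(0,1,1)$ (cf.\ Example \ref{degree 10 cp example}), whence $S_9$ is reducible. So condition (2), read literally with ``some $r,k\geq 1$'', is strictly weaker than your digit condition, and the implication ``(2) $\Rightarrow$ irreducible'' fails without the restriction $1\leq r\leq p$. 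With that restriction imposed, your computation $d=(d_M+1)p^M-1$ and the converse via $rp^k-1=(r-1)p^k+(p^k-1)$ close up immediately and no renormalization is needed. For what it is worth, the paper's own proof makes the same silent assumption --- it asserts that $d=rp^k-1$ forces $C(d,2,p)=\{(0,\ldots,0)\}$ without verifying $\fl{d}\geq M$ --- so your instinct that this is where care is needed was right; the care required is to bound $r$ in the statement, not to renormalize an arbitrary $(r,k)$.
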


\begin{proof}
If $d\leq p-1$ or $d=rp^k-1$ for some $r,k\geq 1$, then by Proposition \ref{prop: carry only 0 or 1}(2)(4) we have that $C(d,2,p)=\{(0,\cdots,0)\}$, so $S_d$ is irreducible by Theorem \ref{carry-submodule correspondence}. Conversely, if (1) and (2) fail,  then $\fl{d}< M$, so by Proposition \ref{prop: carry only 0 or 1}(3) we have $C(d,2,p)\neq\{(0,\cdots,0)\}$.
\end{proof}

In particular, if $d\leq p-1$ or $d=rp^k-1$ then the only invariant ideal generated in degree $d$ is $\langle x_1,\cdots, x_n\rangle^{d}$.

Next, we investigate generators of carry ideals in two variables. Our main result here is Theorem \ref{thm:genstwovars} below. First, we introduce some notation. Given a carry pattern $c\in C(d,2,p)$ we define
\begin{equation}\label{typecseg}
    \mathcal{Z}(c,d)=\{0\leq k\leq M \mid c_k=0,\;(c_{k-1},d_{k-1})\neq (0,p-1)\}.
\end{equation}
We let $\mathcal{Z}(c,d)=\{0=t_0<t_1<\cdots<t_{\ell}\leq M\}$ 
and make the convention that $t_{\ell+1}=M+1$.
Given an integer $0\leq a\leq d$ with base-$p$ expansion $a=(a_0,\cdots,a_M)$ we define the $c$-\defi{segment of $a$ starting at} $t_r\in \mathcal{Z}(c,d)$ to be
\begin{equation}
a_{[t_r,t_{r+1})}=(a_{t_r},a_{t_r+1},\cdots, a_{t_{r+1}-1}).    
\end{equation}
When $a=d$ we often write $\delta_{t_r}=d_{[t_r,t_{r+1})}$. 
The \defi{content} of a $c$-segment of $a$ is
\begin{equation}\label{defcontent}
\Cont{a_{[t_r,t_{r+1})}} = a_{t_r}p^0+a_{t_r+1}p^1+\cdots + a_{t_{r+1}-1}p^{t_{r+1}-t_r-1}. 
\end{equation}

In other words content of a $c$-segment is the value of the number with base-$p$ expansion given by the segment.
We refer to the set of $c$-segments $\{a_{[t_0,t_{1})}, a_{[t_1,t_{2})},\cdots ,a_{[t_{\ell},t_{M+1})}\}$ as the \defi{type $c$ segmentation of} $a$.

\begin{ex}\label{ex:Segmentationp5d62102}
    Let $p=5$ and $d=62102$ so that the base-$p$ expansion of $d$ is
    $(2,0,4,1,4,4,3)$.
    Let $c = (1,1,0,1,0,0) \in C(d,2,p)$.
    Then $\mathcal{Z}(c,d)=\{0,3,5\}$, and the $c$-segments of $d$ are
         $$
         \delta_0=(2,0,4),\quad \delta_3=(1,4),\quad \delta_5=(4,3).
         $$
         The content of each $c$-segment is
         $$
         \Cont{\delta_0}=2+4\cdot 25=102,\quad \Cont{\delta_3}=1+4\cdot 5=21,\quad \Cont{\delta_5}=4+3\cdot 5=19.
         $$
\end{ex}

We are now ready to state our main result on generators of carry ideals in two variables.

\begin{theorem}\label{thm:genstwovars} Let $c\in C(d,2,p)$ and let $\{\delta_{t_0},\dots,\delta_{t_\ell}\}$ be the type $c$ segmentation of $d$. We have
$$
I_{c,d}=\big(\m^{\cont{\delta_{t_0}}} \big)^{[p^{t_0}]}\big(\m^{\cont{\delta_{t_1}}} \big)^{[p^{t_1}]}\cdots \big(\m^{\cont{\delta_{t_{\ell}}}} \big)^{[p^{t_{\ell}}]},
$$
where $\m=\langle x,y\rangle$ and $(-)^{[p^e]}$ denotes the $p^e$-th Frobenius power. In particular, the generators of $I_{c,d}$ are $x^ay^{d-a}$ where 
\begin{equation}\label{eqn:powertwovars}
a=w_0p^{t_0}+\cdots +w_{\ell}p^{t_{\ell}},\quad 0\leq w_r\leq \cont{\delta_{t_r}}.
\end{equation}
\end{theorem}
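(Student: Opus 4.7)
The plan is to prove Theorem \ref{thm:genstwovars} by identifying the generating monomials on both sides of the asserted equality. The right-hand side, being a product of Frobenius powers of powers of $\m$, is generated by the monomials $x^a y^{d-a}$ with $a = \sum_{r=0}^\ell w_r p^{t_r}$ and $0 \leq w_r \leq \cont{\delta_{t_r}}$; here we use that $(\m^N)^{[p^e]}$ is generated by the monomials $x^{u p^e} y^{(N-u) p^e}$ for $0 \leq u \leq N$. On the other hand, $I_{c,d}$ is generated by the degree-$d$ monomials $x^a y^{d-a}$ whose carry pattern is $\leq c$. So the task reduces to showing that these two sets of monomials coincide.

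The key technical step is a structural lemma about $c$ within a single segment $[t_r, t_{r+1})$. Using Proposition \ref{prop: carry only 0 or 1} (entries of $c$ are $0$ or $1$; $c$ does not decrease across a $d$-digit equal to $0$; $c$ does not increase across a $d$-digit equal to $p-1$) together with the defining condition of $\mathcal{Z}(c,d)$, the values $c_{t_r}, c_{t_r+1}, \dots, c_{t_{r+1}-1}$ form a prefix of $0$s of length $s+1$ for some $0 \leq s \leq t_{r+1}-t_r-1$, followed by a suffix of $1$s; moreover the $d$-digits $d_{t_r}, d_{t_r+1}, \dots, d_{t_r+s-1}$ all equal $p-1$. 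This follows because every interior position $k \in (t_r, t_{r+1})$ with $c_k = 0$ is forced (by $k \notin \mathcal{Z}(c,d)$) to satisfy $(c_{k-1}, d_{k-1}) = (0, p-1)$, while a descent $c_k = 1,\, c_{k+1} = 0$ would place $k+1$ in $\mathcal{Z}(c,d)$.

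For the $(\supseteq)$ direction, suppose $x^a y^{d-a}$ has carry pattern $c' \leq c$. Each $t_r \in \mathcal{Z}(c,d)$ satisfies $c_{t_r}=0$, hence $c'_{t_r}=0$; combined with $c'_{M+1}=0$ by convention, this means no carry crosses any segment boundary when computing $a + (d-a) = d$. Setting $w_r := \sum_{j=0}^{t_{r+1}-t_r-1} a_{t_r+j}\,p^j$ to be the chunk of $a$ in segment $r$ interpreted as a base-$p$ number, and analogously $v_r$ for $d-a$, the chunk addition yields $w_r + v_r = \cont{\delta_{t_r}}$, whence $0 \leq w_r \leq \cont{\delta_{t_r}}$ and $a = \sum_r w_r p^{t_r}$. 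Conversely, given any $0 \leq w_r \leq \cont{\delta_{t_r}}$ and setting $a = \sum_r w_r p^{t_r}$, $b = \sum_r (\cont{\delta_{t_r}} - w_r) p^{t_r}$, each $w_r$ and $\cont{\delta_{t_r}} - w_r$ is below $p^{t_{r+1}-t_r}$, so the addition does not overflow segment boundaries. Within segment $r$, the first $s$ digits of $\delta_{t_r}$ equal $p-1$ by the structural lemma, and a short induction on position (using that $\tilde c_0=0$ and that $\tilde c_j=0$ together with a $p-1$ digit forces $\tilde c_{j+1}=0$, since otherwise the local digit sum would exceed $2p-2$) shows that the first $s+1$ local carries vanish; at the remaining positions $c_{t_r+j} = 1$, which imposes no constraint. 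Hence the carry pattern of $x^a y^{d-a}$ is $\leq c$.

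The main obstacle is the structural lemma on the shape of $c$ within a segment, together with the forced-zero-carry analysis; both rely crucially on the specialization to $n=2$ through the $\{0,1\}$-restriction on entries of $c$ provided by Proposition \ref{prop: carry only 0 or 1}. Once these are in place, the identification between monomials in $I_{c,d}$ and tuples $(w_0,\dots,w_\ell)$ with $0 \leq w_r \leq \cont{\delta_{t_r}}$ follows cleanly from the segment-wise independence of the addition process.
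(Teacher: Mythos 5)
Your proposal is correct, and its overall strategy coincides with the paper's: both sides are generated in degree $d$, so the theorem reduces to showing that the monomials $x^ay^{d-a}$ with carry pattern $\leq c$ are exactly those with $a=\sum_r w_rp^{t_r}$, $0\leq w_r\leq\cont{\delta_{t_r}}$; this is the content of the paper's Lemma \ref{lem: set Ac}. Your argument for the direction ``carry pattern $\leq c$ implies $a$ has the claimed form'' (vanishing carries at segment boundaries, then chunk-wise bookkeeping) is essentially identical to the paper's. Where you diverge is the converse. The paper proves it by contrapositive: given $c'\not\leq c$, it takes the minimal index $m$ with $c'_m=1>c_m=0$, uses Proposition \ref{prop: carry only 0 or 1} to show $m\in\mathcal{Z}(c,d)$, and then deduces that the content of the preceding segment of $a'$ exceeds that of $d$. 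You instead argue directly: you first establish a structural lemma (not stated in the paper) that within each segment $[t_r,t_{r+1})$ the entries of $c$ consist of an initial run of zeros sitting over digits $d_k=p-1$, followed by a run of ones, and then run a digit-by-digit carry induction showing that any $a$ of the claimed form produces zero carries exactly on that initial run, while the positions with $c_k=1$ impose no constraint because two-variable carries are at most $1$. Both routes lean on the same input (Proposition \ref{prop: carry only 0 or 1} and the definition of $\mathcal{Z}(c,d)$); yours makes the shape of $c$ inside a segment explicit, which is a pleasant structural byproduct, while the paper's contrapositive is slightly shorter because it only needs to locate a single violated content inequality rather than control every carry.
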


We note that, in the statement of Theorem \ref{thm:genstwovars}, we have that $w_r$ is the content of $a_{[t_r,t_{r+1})}$.

\begin{ex}\label{ex:Segmentationp5d62102IdealFactoring}
    Consider $p,c,d$ as in Example~\ref{ex:Segmentationp5d62102}. 
    By Theorem \ref{thm:genstwovars} we have
    \[ I_{c,d} = \m^{102} (\m^{21})^{[5^3]} (\m^{19})^{[5^5]}. \]
\end{ex}

\begin{remark}
Theorem \ref{thm:genstwovars} resembles the Steinberg tensor formula \cite[Corollary II.3.17]{jantzen}, which says that if $a\geq 0$ with base-$p$ expansion $a=(a_0,\cdots,a_M)$, then there is an isomorphism of $\GL_2(\k)$-representations:
$$
L(a)=L(a_0)\otimes L(a_1)^{[p]}\otimes L(a_2)^{[p^2]}\otimes \cdots \otimes L(a_M)^{[p^M]},
$$
where $L(a_e)^{[p^e]}$ denotes the $p^e$-th Frobenius power of $L(a_e)$ \cite[II.3.16]{jantzen}.

For instance, when $c=(0,\cdots,0)$, the ideal $I_{c,d}$ is generated by $L(d)\subseteq S_d$. Since $S_j=L(j)$ for $j\leq p-1$ (see Example \ref{exsmalld}), it follows from the Steinberg tensor formula that
$$
I_{(0),d}=\big(\m^{d_0} \big)\big(\m^{d_1} \big)^{[p]}\big(\m^{d_2} \big)^{[p^2]}\cdots \big(\m^{d_M} \big)^{[p^M]},
$$
which is the same decomposition as in Theorem \ref{thm:genstwovars} if and only if $\mathcal{Z}(c,d)=\{0,\cdots,M\}$ (i.e. $d_i\neq p-1$ for all $i<M$). It is unclear to us the relation between Theorem \ref{thm:genstwovars} and the Steinberg formula when $c\neq (0,\cdots,0)$.
\end{remark}

    We proceed to prove Theorem \ref{thm:genstwovars}. Let $d \geq 0$ and let $c\in C(d,2,p)$. Given the type $c$ segmentation $\{\delta_{t_0},\dots,\delta_{t_\ell}\}$ of $d=(d_0,\dots,d_M)$, we define the set 
    \begin{equation}
     A^c = \{ a = (a_0, \dots,a_M) \mid 0\leq a\leq d \text{ and } \Cont{a_{[t_r,t_{r+1})}} \le \Cont{\delta_{t_r}} \text{ for all $0\leq r\leq \ell$}\},   
    \end{equation} where $\{a_{[t_0,t_{1})}, a_{[t_1,t_{2})},\cdots ,a_{[t_{\ell},t_{M+1})}\}$ is the type $c$ segmentation of the sequence $a$.

\begin{lemma}\label{lem: set Ac}
    Let $c\in C(d,2,p)$ and let $x^ay^{d-a}\in S_d$ have carry pattern $c^\prime$. Then $c^\prime\leq c$ if and only if $a\in A^c$.  
\end{lemma}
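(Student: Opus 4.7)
The plan is to reduce the equivalence $c'\le c \Leftrightarrow a\in A^c$ through three translations: a pointwise reduction, a numerical reformulation of the vanishing of $c'_{t_r}$, and an inductive comparison of segment contents. Throughout, write $A_\ell = a \bmod p^\ell$, $B_\ell = b\bmod p^\ell$ (with $b=d-a$), and $D_\ell = d\bmod p^\ell$, so that $A_\ell = \sum_{j<\ell} a_j p^j$ etc.

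First I would reduce $c'\le c$ to testing only at the indices in $\mathcal{Z}(c,d) = \{t_0<t_1<\cdots<t_\ell\}$. By Proposition \ref{prop: carry only 0 or 1}(1), both $c$ and $c'$ have entries in $\{0,1\}$, so $c'\le c$ is equivalent to $c'_k=0$ at every $k$ with $c_k=0$. An index $k$ with $c_k=0$ that lies outside $\mathcal{Z}(c,d)$ must, by definition, satisfy $(c_{k-1},d_{k-1})=(0,p-1)$; unwinding this, each such $k$ is reached from some $t_r$ by a run of digits $d_{t_r}=\cdots=d_{k-1}=p-1$ (while $c_{t_r}=\cdots=c_k=0$). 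The key forced-zero propagation is: if $c'_{j}=0$ and $d_j=p-1$, then $c'_{j+1}=0$, because otherwise the defining equation for carries would force $a_j+b_j=2p-1$, exceeding $2(p-1)$. Iterating shows that $c'_{t_r}=0$ for all $r$ automatically upgrades to $c'_k=0$ at every $k$ with $c_k=0$. Hence $c'\le c$ is equivalent to $c'_{t_r}=0$ for $r=0,1,\dots,\ell$ (the case $r=0$ being vacuous).

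Next I would translate the vanishing $c'_{t_r}=0$ into the numerical statement $A_{t_r}\le D_{t_r}$. From $a+b=d$ one has $A_\ell + B_\ell \equiv D_\ell \pmod{p^\ell}$, and since $A_\ell + B_\ell \in [0,2p^\ell-2]$ and $D_\ell\in[0,p^\ell-1]$, the value $A_\ell+B_\ell$ is either $D_\ell$ or $D_\ell+p^\ell$. The defining equation $(\ref{carry pattern})$ then gives $c'_\ell = 0$ iff $A_\ell+B_\ell = D_\ell$, which is equivalent to $A_\ell\le D_\ell$ (the other case would force $B_\ell\ge p^\ell$). Applied at $\ell=t_r$, the previous step becomes: $c'\le c$ iff $A_{t_r}\le D_{t_r}$ for all $r=1,\dots,\ell$.

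Finally I would prove that, together with the standing constraint $a\le d$ (i.e.\ $A_{M+1}\le D_{M+1}$), these inequalities are equivalent to the segmentwise bounds $\cont{a_{[t_r,t_{r+1})}}\le \cont{\delta_{t_r}}$ for $0\le r\le \ell$. Regrouping base-$p$ digits by segments gives the identities $A_{t_r} = \sum_{s<r} p^{t_s}\cont{a_{[t_s,t_{s+1})}}$ and $D_{t_r} = \sum_{s<r} p^{t_s}\cont{\delta_{t_s}}$. The direction $(\Leftarrow)$ is immediate. For $(\Rightarrow)$, I proceed by induction on $r$: assuming $\cont{a_{[t_s,t_{s+1})}}\le\cont{\delta_{t_s}}$ for $s<r$, suppose for contradiction that $\cont{a_{[t_r,t_{r+1})}}\ge\cont{\delta_{t_r}}+1$. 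Subtracting $A_{t_{r+1}}\le D_{t_{r+1}}$ from $D_{t_r}\ge A_{t_r}$ yields $p^{t_r}\le D_{t_r}-A_{t_r}\le D_{t_r}$, contradicting $D_{t_r}<p^{t_r}$.

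The main obstacle I anticipate is bookkeeping the forced-zero propagation cleanly: one has to confirm that $\mathcal{Z}(c,d)$ captures exactly the indices that cannot be obtained from an earlier zero via a run of digits $p-1$ in $d$, so that the propagation covers every remaining index. Beyond this, the numerical arguments are short once the segment identities for $A_{t_r}$ and $D_{t_r}$ are set up.
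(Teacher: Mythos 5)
Your proof is correct and follows essentially the same route as the paper's: both arguments hinge on comparing the carry equations at consecutive elements of $\mathcal{Z}(c,d)$ and on the fact that a zero of $c'$ propagates across a run of digits $p-1$ of $d$ (via Proposition \ref{prop: carry only 0 or 1}(1)). Your packaging as a chain of equivalences, with the intermediate characterization $c'_\ell = 0 \iff (a \bmod p^\ell) \le (d \bmod p^\ell)$, is a clean reformulation of the paper's two-directional argument (direct for one implication, contrapositive for the other), but the underlying ideas coincide.
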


\begin{proof}
Let $a=(a_0,\dots,a_M)$ be the base-$p$ expansion of $a$ with type $c$ segmentation $\{\alpha_{t_0},\dots,\alpha_{t_\ell}\}$.
    First, we assume that $c^\prime\leq c$. Then for all $0\leq r\leq \ell$ we have $c_{t_r} = c_{t_{(r+1)}}= 0$, and since $c^\prime\leq c$, we conclude also $c_{t_r}^\prime = c_{t_{(r+1)}}^\prime= 0$. By the defining equations (\ref{carry pattern}) of carry patterns, we have
    $$
        \sum_{k=0}^{t_r-1}(a_k+(d-a)_k)p^k - \sum_{k=0}^{t_r-1}d_kp^k = c_{t_r}^\prime =0,\quad \sum_{k=0}^{t_{(r+1)}-1}(a_k+(d-a)_k)p^k - \sum_{k=0}^{t_{(r+1)}-1}d_kp^k = c_{t_{(r+1)}}^\prime =0.
    $$
   Taking the difference of these equations, we obtain 
    \[\sum_{k=t_r}^{t_{(r+1)}-1}(a_k+(d-a)_k)p^k - \sum_{k=t_r}^{t_{(r+1)}-1}d_kp^k = 0,\]
    from which we deduce the inequality \[\cont{\alpha_{t_r}}p^{t_r} = \sum_{k=t_r}^{t_{(r+1)}-1}a_kp^k \leq \sum_{k=t_r}^{t_{(r+1)}-1}d_kp^k = \cont{\delta_{t_r}}p^{t_r}.\] Therefore, $\cont{\alpha_{t_r}}\leq \cont{\delta_{t_r}}$ for all $0\leq r\leq \ell$, so $a\in A^c$.

    We prove the converse by the contrapositive. Assume that some monomial $x^{a'}y^{d-a'}\in S_d$ has carry pattern $c'$ that is greater than or incomparable to $c$. We will show that $a'=(a_0',\cdots,a_M')$ does not belong to $A^c$. The assumption on $c'$ implies that there exists some $1\leq m\leq M$ for which $c_m'=1$ and $c_m=0$. Let $m$ be the minimal index satisfying this property. We claim that $(c_{m-1},d_{m-1})\neq(0,p-1)$ so that $m=t_r\in \mathcal{Z}(c,d)$ for some $1\leq r\leq \ell$. Indeed, if $(c_{m-1},d_{m-1})=(0,p-1)$, then by minimality of $m$ we have that $c_{m-1}'=0$. Thus, $c_{m-1}'=0$, $c_m'=1$, and $d_{m-1}=p-1$, violating part (1) of Proposition \ref{prop: carry only 0 or 1}. 
    
    Let $t_r=m\in\mathcal{Z}(c,d)$, noting that $t_r\neq t_0=0$, since $c_0=c_0'=0$. Thus, $r\geq 1$. Since $t_r$ is the first index for which $c^\prime_{t_r}>c_{t_r}$, we have $c^\prime_{t_{(r-1)}}\leq c_{t_{(r-1)}}=0$, so that $c^\prime_{t_{(r-1)}}=0$.

    Expanding $c^\prime_{t_{(r-1)}}$ and $c'_{t_r}$ using the defining equations (\ref{carry pattern}) of carry patterns, we have
$$
\sum_{k=0}^{t_{(r-1)}-1}(a'_k + (d-a')_k)p^k - \sum_{k=0}^{t_{(r-1)}-1}d_kp^k = c'_{t_{(r-1)}}p^{t_{(r-1)}} = 0,\quad \sum_{k=0}^{t_r-1}(a'_k + (d-a')_k)p^k - \sum_{k=0}^{t_r-1}d_kp^k = c'_{t_r}p^{t_r} = p^{t_r}.
$$
Taking their difference, we obtain
 \[\sum_{k=t_{(r-1)}}^{t_r-1}a'_k p^k = \sum_{k=t_{(r-1)}}^{t_r-1}d_kp^k  + p^{t_r} - \sum_{k=t_{(r-1)}}^{t_r-1}(d-a')_kp^k.\]
It follows that 
\[
\sum_{k=t_{(r-1)}}^{t_r-1}a'_k p^k > \sum_{k=t_{(r-1)}}^{t_r-1}d_kp^k
.\]
Dividing both sides by $p^{t_{(r-1)}}$, we see that $\cont{a'_{[t_{r-1},t_{r})}} > \cont{\delta_{t_{r-1}}}$, so $a'\notin A^c$.
\end{proof}

\begin{proof}[Proof of Theorem \ref{thm:genstwovars}]
Observe that $a$ is of the form (\ref{eqn:powertwovars}) if and only if $a\in A^c$. By Lemma \ref{lem: set Ac} we conclude that $x^ay^{d-a}$ is a generator of $I_{c,d}$ if and only if $a$ is of the form (\ref{eqn:powertwovars}).

The generators of $\prod_{r=0}^{\ell} \left(\m^{\cont{\delta_{t_r}}} \right)^{[p^{t_r}]}$ are
$$
(x^{w_0p^{t_0}}y^{(\cont{\delta_{t_0}}-w_0)p^{t_0}})\cdot (x^{w_1p^{t_1}}y^{(\cont{\delta_{t_1}}-w_1)p^{t_1}})\cdots (x^{w_{\ell}p^{t_{\ell}}}y^{(\cont{\delta_{t_{\ell}}}-w_{\ell})p^{t_{\ell}}}),\quad 0\leq w_r\leq \cont{\delta_{t_r}},
$$
which have degree $d$, and the following powers of $x$:
$$
w_0p^{t_0}+\cdots +w_{\ell}p^{t_{\ell}},\quad 0\leq w_r\leq \cont{\delta_{t_r}}.
$$
In particular, this ideal has the same generators as $I_{c,d}$.
\end{proof}

In light of Theorem~\ref{thm:genstwovars}, we pose the following question:

\begin{question}
Let $n \geq 3$. Which invariant ideals can be expressed as a product of Frobenius powers of powers of the homogeneous maximal ideal?    
\end{question}
Note that products of Frobenius powers of powers of the homogeneous maximal ideal are invariant.

\section{Free resolutions}\label{freeres}

In this section we begin to investigate graded free resolutions of invariant ideals. Our main focus here is minimal free resolutions, which do not necessarily admit an equivariant structure in positive characteristic. For a discussion of equivariance, see Section \ref{equivariantres}.

\subsection{Generalities on minimal free resolutions of invariant ideals}\label{generalfreeres} Let $S=\k[x_1,\cdots,x_n]$ endowed with its standard grading with $\deg(x_i)=1$ for $i=1,\cdots,n$. Given an invariant ideal $I$, we consider the minimal graded free resolution $\mathbf{F}_{\bullet}$ of $S/I$:
$$
\mathbf{F}_{\bullet}:\quad F_0\xleftarrow{\partial_1} F_1 \overset{\partial_2}\longleftarrow F_2 \overset{\partial_3}\longleftarrow \cdots \overset{\partial_{n-1}}\longleftarrow F_{n-1}\overset{\partial_n}\longleftarrow F_n,
$$
which is an acyclic complex with $\operatorname{coker}(\partial_1)=S/I$ and $\partial_i(F_i)\subseteq \langle x_1,\cdots,x_n\rangle F_{i-1}$ for all $i\geq 1$. Here, the differentials are homogeneous of degree zero and the terms are graded free modules:
$$
F_i=\bigoplus_j S(-j)^{\oplus \beta_{i,j}}.
$$
The multiplicities $\beta_{i,j}$ are the graded Betti numbers. In other words, $\beta_{i,j}=\dim_{\k} \operatorname{Tor}_i^S(S/I,\k)_j$ and $F_i=S\otimes_{\k} \operatorname{Tor}_i^S(S/I,\k)$. Note that $F_0=S$, so that $\beta_{0,j}\neq 0$ if and only if $j=0$, and $\beta_{0,0}=1$. 

The projective dimension of $S/I$, written $\pd_S(S/I)$, is the maximal index $0\leq j \leq n$ for which $F_j\neq 0$. We show that $\pd_S(S/I)$ is the largest valued permitted by Hilbert's Syzygy Theorem \cite[Theorem 1.1]{gos}.

\begin{prop}\label{prop:pd}
    If $I\subseteq S$ is an invariant ideal, then $\pd_S(S/I)= n$.
\end{prop}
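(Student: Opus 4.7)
The plan is to apply the Auslander--Buchsbaum formula, with the key input being that $S/I$ is supported only at the origin, hence has depth zero.

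First, I would use Lemma \ref{lemma:origin} (assuming $I \neq 0$, since otherwise the statement is vacuous or requires $n = 0$) to conclude that $\sqrt{I} = \langle x_1, \ldots, x_n \rangle = \m$. In particular, some power of $\m$ annihilates $S/I$, so $S/I$ is a finite-length $S$-module. Equivalently, by Lemma \ref{lemma:origin} the graded piece $(S/I)_d = 0$ for $d$ sufficiently large, so $S/I$ is a finite-dimensional $\k$-vector space.

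Next I would show that $\depth_{\m}(S/I) = 0$. Since $S/I$ has finite length, every homogeneous element of $\m$ acts as a nilpotent endomorphism on $S/I$, and in particular is a zero divisor on $S/I$. Because any regular sequence on a finitely generated graded module over $S$ can be chosen to consist of homogeneous elements of positive degree (i.e., in $\m$), no such element exists, so $\depth_{\m}(S/I) = 0$.

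Finally, I would invoke the Auslander--Buchsbaum formula applied to the Cohen--Macaulay (in fact regular) ring $S$:
\[
\pd_S(S/I) + \depth_{\m}(S/I) = \depth_{\m}(S) = n.
\]
Combining this with $\depth_{\m}(S/I) = 0$ yields $\pd_S(S/I) = n$, as desired. I do not expect any genuine obstacle here; the proof is essentially a two-line consequence of Lemma \ref{lemma:origin} plus a standard homological identity, so the only subtlety is noting that the hypothesis forces $I$ to be $\m$-primary and that this immediately kills the depth of $S/I$.
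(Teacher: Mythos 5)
Your proof is correct and follows essentially the same route as the paper: both reduce to showing $\depth(S/I)=0$ via Lemma \ref{lemma:origin} (the paper exhibits a socle element in degree $d-1$ where $d$ is minimal with $I_d=S_d$, while you note that finite length forces every homogeneous element of $\m$ to be a zerodivisor) and then conclude by Auslander--Buchsbaum. Your explicit remark about the case $I=0$ is a reasonable caveat the paper leaves implicit.
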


\begin{proof}
By Lemma \ref{lemma:origin}, we have that $I_{d } = S_{d }$ for all $d\gg 0$. Let $d $ be minimal for which $I_{d } = S_{d }$. Then every element of $(S/I)_{d -1}$ is annihilated by all elements of the homogeneous maximal ideal. Therefore, $\depth(S/I)=0$, so by the Auslander--Buchsbaum formula \cite[Theorem 1.2.7]{Weyman}, we have $\pd_S(S/I)= n$. 
\end{proof}

Proposition \ref{prop:pd} asserts that we if place the graded Betti numbers $\beta_{i,j}$ into the Betti table (pictured below), then the $n$-th column is the final nonzero column.

    \[\begin{array}{l|cccccc}
        & 0 & 1  & \cdots & i & \cdots & n \\ \hline
       0 & \beta_{0,0} & \beta_{1,1}  & \cdots & \beta_{i,i} & \cdots & \beta_{n,n} \\
       1 & \beta_{0,1} & \beta_{1,2}  & \cdots & \beta_{i,i+1} & \cdots & \beta_{n,n+1} \\
       2 & \beta_{0.2} & \beta_{1,3}  & \cdots & \beta_{i,i+2} & \cdots & \beta_{n,n+2} \\
       \vdots & \vdots & \vdots  & \vdots & \vdots & \vdots & \vdots  \\
       j & \beta_{0.j} & \beta_{1,1+j}  & \cdots & \beta_{i,i+j} & \cdots & \beta_{n,n+j}\\
       \vdots & \vdots  & \vdots & \vdots & \vdots & \vdots & \vdots 
    \end{array}
\]

\medskip

The index of the final nonzero row is the Castelnuovo--Mumford regularity:
$$
\operatorname{reg}(S/I)=\max\{j\mid \beta_{i,i+j}\neq 0\textnormal{ for some $1\leq i\leq n$}\}.
$$
Since $S/I$ has finite length (Lemma \ref{lemma:origin}), it follows from \cite[Corollary 4.4]{gos} that
\begin{prop}\label{regprop}
If $I\subseteq S$ is an invariant ideal, then $\operatorname{reg}(S/I)=\max\{d \mid (S/I)_d\neq 0\}$.    
\end{prop}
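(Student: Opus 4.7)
The plan is to deduce this as essentially a one-line corollary of the cited result \cite[Corollary 4.4]{gos}, via a two-step argument.

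First, I would verify that $S/I$ has finite length as a graded $S$-module. Each graded component $(S/I)_d$ is a finite-dimensional $\k$-vector space because it is a quotient of $S_d$, which is itself finite-dimensional. By Lemma \ref{lemma:origin}, since $I$ is nonzero there exists $D$ with $I_d = S_d$ for all $d \geq D$, so $(S/I)_d = 0$ for $d \geq D$. Combining these two observations gives $\operatorname{length}(S/I) = \sum_d \dim_{\k} (S/I)_d < \infty$, and in particular the set $\{d \mid (S/I)_d \neq 0\}$ is finite, hence has a well-defined maximum, call it $D^*$.

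Second, I would apply \cite[Corollary 4.4]{gos}: for any graded $S$-module $M$ of finite length, $\operatorname{reg}(M) = \max\{d \mid M_d \neq 0\}$. Conceptually, this follows from the local-cohomology characterization of regularity, $\operatorname{reg}(M) = \max_i\{\text{top nonzero degree of } H^i_{\m}(M) + i\}$; a finite-length module is $\m$-torsion, so $H^0_{\m}(M) = M$ is the only nonzero local cohomology, and its top nonzero degree is exactly the top nonzero degree of $M$. Specializing to $M = S/I$ yields $\operatorname{reg}(S/I) = D^*$, which is the claimed equality.

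There is no real obstacle here: once Lemma \ref{lemma:origin} places us in the setting of Artinian graded modules, the proposition reduces to a standard fact in graded commutative algebra. The only subtlety worth flagging is that the statement implicitly assumes $I \neq 0$ (otherwise $S/I = S$ is not of finite length and the right-hand side is infinite); this matches the hypothesis under which Lemma \ref{lemma:origin} was used in the preceding sentence of the exposition.
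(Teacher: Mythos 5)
Your proof is correct and matches the paper's argument exactly: the paper likewise deduces the proposition from Lemma \ref{lemma:origin} (finite length of $S/I$) together with \cite[Corollary 4.4]{gos}. Your added remarks on the local-cohomology interpretation and the implicit hypothesis $I\neq 0$ are accurate but not needed.
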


In particular, the regularity of $S/I$ is the maximal degree of a monomial not appearing in $I$. We illustrate here with an example.

\begin{ex}\label{ex:betti}
Let $p=3$ and $n=3$, and consider 
$$
I=I_{(0),4}=\langle x^4, x^3y, x^3z, xy^3, xz^3, y^4, y^3z, yz^3, z^4\rangle\subseteq \k[x,y,z].
$$
We see that $(S/I)_6=\k\langle x^2y^2z^2\rangle$ and $(S/I)_7=0$. Therefore, $\operatorname{reg}(S/I)=6$. According to Macaulay2 \cite{GS} the Betti table of $S/I$ is:
$$
\begin{matrix}
 & 0 & 1 & 2 & 3\\
\text{total:} & 1 & 9 & 12 & 4\\
0: & 1 & . & . & .\\
1: & . & . & . & .\\
2: & . & . & . & .\\
3: & . & 9 & 9 & 3\\
4: & . & . & 3 & .\\
5: & . & . & . & .\\
6: & . & . & . & 1
\end{matrix}
$$
We study the representation structure of $\operatorname{Tor}_i^S(S/I,\k)_j$ in Example \ref{exTorcancel} below. In particular, we provide a representation-theoretic reason why the Betti table of $S/I$ does not satisfy the maximal cancellation property.
\end{ex}

It would be interesting to find a general combinatorial formula for the regularity of $S/I$ in terms of the carry patterns corresponding to the generators of $I$. 

\begin{question}
    Is there a formula for Castelnuovo--Mumford regularity of a carry ideal in terms of carry pattern $c$, degree $d$, and number of variables $n$?
\end{question}

In the two variable case we provide an answer (Corollary~\ref{regn2}) to this question in terms of the type $c$ segmentation of $d$.
We remark that, since $\GL_n(\k)$-invariant ideals are $\mathfrak{S}_n$-invariant monomial ideals, one may calculate the regularity using \cite[Theorem on Regularity and Projective Dimension]{raicu}.

\subsection{Representation structure of Tor}\label{sec:reptor} The quotient of $S$ by the homogeneous maximal ideal $\m=\langle x_1,\cdots,x_n\rangle$ is resolved by the Koszul complex $\mathcal{K}_{\bullet}$ (see \cite[Section 17.2]{Eisenbud}): 

\smallskip 

\begin{center}
$\mathcal{K}_{\bullet}:\quad S  \leftarrow S\otimes \k^n\leftarrow S\otimes \bigwedge^2(\k^n)\leftarrow \cdots \leftarrow S\otimes \bigwedge^{n-1}(\k^n)\leftarrow S\otimes \bigwedge^n(\k^n)\leftarrow 0. $\\
\end{center}
Given an invariant ideal $I\subseteq S$, the modules $\operatorname{Tor}_i^S(S/I,\k)$ are endowed with structures as representations of $\GL_n(\k)$ via the Koszul complex $\mathcal{K}_{\bullet}$. Indeed, since $\mathcal{K}_{\bullet}$ is a complex of representations, and $\operatorname{Tor}_i^S(S/I,\k)=H_i(\mathcal{K}_{\bullet}\otimes_S S/I)$, we have that $\operatorname{Tor}_i^S(S/I,\k)$ is the homology of a complex of representations.

\begin{prob}
    Describe the $\GL_n(\k)$-representation structure of the modules $\operatorname{Tor}_i^S(S/I, \k)$, where $I$ is an invariant ideal.
\end{prob}

In Corollary \ref{repontorn2} we provide a formula for $\operatorname{Tor}_i^S(\k[x,y]/I, \k)$ in the Grothendieck group of representations of $\GL_2(\k)$. We remark that the $\mathfrak{S}_n$-module structure of these spaces has been studied in detail for $\mathfrak{S}_n$-invariant monomial ideals in \cite{murai}.

Taking graded pieces of the complex $\mathcal{K}_{\bullet}\otimes_S S/I$, we obtain the following (c.f. \cite[Proposition 2.7]{gos}).

\begin{prop}\label{prop:tor}
Let $I\subseteq S$ be an invariant ideal and let $j\geq 0$. For $1\leq i\leq n$ the $\GL_n(\k)$-representation $\operatorname{Tor}_i^S(S/I,\k)_j$ is isomorphic to the homology, at the term $(S/I)_{j-i}\otimes \bigwedge^i(\k^n)$, of the complex

\smallskip 

\begin{center}
$0\to (S/I)_{j-n}\otimes \bigwedge^n(\k^n)\to (S/I)_{j-n+1}\otimes \bigwedge^{n-1}(\k^n)\to \cdots \to (S/I)_{j-1}\otimes \bigwedge^1(\k^n)\to (S/I)_{j}\otimes \k\to 0. $  \\
\end{center}
\end{prop}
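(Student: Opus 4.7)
The plan is to compute $\operatorname{Tor}_i^S(S/I,\k)$ via the Koszul complex $\mathcal{K}_\bullet$, which is a free resolution of $\k$ by graded $S$-modules, and then extract the degree $j$ piece. Recall that $\mathcal{K}_i = S\otimes_{\k} \bigwedge^i(\k^n)$ where $\k^n = S_1$ is placed in internal degree $1$, so that $\bigwedge^i(\k^n)$ sits in internal degree $i$ and the Koszul differential is homogeneous of degree $0$. Crucially, $\mathcal{K}_\bullet$ is a complex of $\GL_n(\k)$-equivariant graded $S$-modules: the $\GL_n(\k)$ action on $\bigwedge^i(\k^n)$ is the induced one on the exterior power of the standard representation, and the Koszul differential is $\GL_n(\k)$-equivariant because it is built from the tautological map $\bigwedge^i(\k^n) \to S_1\otimes \bigwedge^{i-1}(\k^n)$.

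Next, I would tensor with $S/I$ over $S$. Since $I$ is invariant, $S/I$ is a graded $\GL_n(\k)$-representation, and the complex $\mathcal{K}_\bullet \otimes_S (S/I)$ is again a complex of graded $\GL_n(\k)$-representations, whose $i$-th term simplifies to $(S/I) \otimes_{\k} \bigwedge^i(\k^n)$. By definition, $\operatorname{Tor}_i^S(S/I,\k) = H_i(\mathcal{K}_\bullet \otimes_S S/I)$, and because the terms and differentials are $\GL_n(\k)$-equivariant, this identification endows $\operatorname{Tor}_i^S(S/I,\k)$ with its $\GL_n(\k)$-module structure (which agrees with the one coming from any other equivariant resolution, as $\GL_n(\k)$-representations of a finite-dimensional Hopf-algebra nature give well-defined homology modules).

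Finally, taking the degree $j$ graded piece is an exact functor on graded $\k$-vector spaces, so it commutes with homology. In the term $(S/I) \otimes_{\k} \bigwedge^i(\k^n)$, the wedge factor contributes internal degree $i$, so the degree $j$ part is $(S/I)_{j-i} \otimes_{\k} \bigwedge^i(\k^n)$. The induced differentials, which lower the wedge degree by one while multiplying by a variable of degree one, restrict to the maps
\[
(S/I)_{j-i}\otimes \bigwedge^i(\k^n)\longrightarrow (S/I)_{j-i+1}\otimes \bigwedge^{i-1}(\k^n),
\]
producing exactly the displayed complex (with the boundary terms $i=0$ and $i=n$ giving the zeros at the ends). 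The homology at the $i$-th spot is then $\operatorname{Tor}_i^S(S/I,\k)_j$ as a $\GL_n(\k)$-representation.

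There is no real obstacle here: the only thing to verify carefully is that every arrow in the story (the Koszul differentials, the tensor product with $S/I$, taking graded pieces, and passing to homology) respects the $\GL_n(\k)$-action, which follows from $I$ being invariant. The mild point worth spelling out is why the resulting module structure on $\operatorname{Tor}$ agrees with the intrinsic one: this is standard since any equivariant free resolution of $\k$ computes $\operatorname{Tor}$ equivariantly, and two such resolutions are chain homotopy equivalent through equivariant maps.
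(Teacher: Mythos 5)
Your proposal is correct and follows the same route as the paper: identify $\operatorname{Tor}_i^S(S/I,\k)$ with the homology of $\mathcal{K}_\bullet\otimes_S S/I$, note that the Koszul complex is a complex of $\GL_n(\k)$-representations, and take the degree $j$ graded piece. The paper treats this as an immediate consequence of the discussion preceding the statement, so your more detailed verification of equivariance and degree bookkeeping is just an expanded version of the same argument.
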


Let $r=\operatorname{reg}(S/I)$. It follows from Proposition \ref{regprop} that we have the following concrete description of $\operatorname{Tor}_n^S(S/I,\k)_r$ (which corresponds to the bottom-right Betti number)

\begin{cor}
Using notation as above, we have

\smallskip

\begin{center}
$\operatorname{Tor}_n^S(S/I,\k)_r=(S/I)_r\otimes \bigwedge^n(\k^n).$\\
\end{center}
\end{cor}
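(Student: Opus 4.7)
The plan is to apply Proposition \ref{prop:tor} with $i = n$, where the displayed complex has $(S/I)_{j-n} \otimes \bigwedge^n(\k^n)$ as its leftmost term. Since there is no incoming Koszul differential at the leftmost position, the homology computing $\operatorname{Tor}_n^S(S/I,\k)_j$ at this spot is simply the kernel of the outgoing Koszul differential into $(S/I)_{j-n+1} \otimes \bigwedge^{n-1}(\k^n)$.

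Next I would pick the internal degree corresponding to the regularity row of the Betti table, so that the source of this outgoing differential is exactly $(S/I)_r \otimes \bigwedge^n(\k^n)$ where $r = \operatorname{reg}(S/I)$. The target of the differential is then $(S/I)_{r+1} \otimes \bigwedge^{n-1}(\k^n)$, and by Proposition \ref{regprop} we have $(S/I)_{r+1} = 0$, since $r$ is the largest degree in which $S/I$ is nonzero. The outgoing differential is therefore the zero map, its kernel is the full source module, and we obtain the identification $\operatorname{Tor}_n^S(S/I,\k)_r \cong (S/I)_r \otimes \bigwedge^n(\k^n)$. Equivariance under $\GL_n(\k)$ is inherited from the equivariance of the Koszul complex (whose differential is built from the tautological map $\k^n \to S_1$) together with the $\GL_n(\k)$-stability of $I$, so the isomorphism above is an isomorphism of representations.

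There is no serious obstacle in this argument: once Propositions \ref{prop:tor} and \ref{regprop} are at hand, the corollary is a one-line consequence, as the outgoing Koszul differential from the leftmost position of the complex is forced to land in a zero module in the regularity degree. The only point requiring care is the grading bookkeeping on the tensor-Koszul complex, to verify that the source and target of the relevant differential sit in the degrees $r$ and $r+1$ of $S/I$ respectively; equivariance of the resulting identification is automatic.
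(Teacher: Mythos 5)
Your proof is correct and is exactly the argument the paper intends (the paper states this corollary as an immediate consequence of Propositions \ref{prop:tor} and \ref{regprop} without spelling it out): taking $i=n$, the leftmost Koszul term has no incoming differential, and in the regularity degree the outgoing differential lands in $(S/I)_{r+1}\otimes\bigwedge^{n-1}(\k^n)=0$. Your closing remark about grading bookkeeping is apt: the internal degree of the Tor module in question is $r+n$ rather than $r$ (compare Example \ref{exTorcancel}, where $\operatorname{Tor}_3^S(S/I,\k)_9$ is computed with $r=6$ and $n=3$), so the subscript in the corollary's display should be read with that shift.
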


\begin{ex}\label{exTorcancel}
We examine some of the Tor modules from Example \ref{ex:betti}. As $(S/I)_6=\k\langle x^2y^2z^2\rangle$ we have 
$$
\operatorname{Tor}_3^S(S/I,\k)_9=L(2,2,2)\otimes L(1,1,1)=L(3,3,3).
$$
We consider the Tor modules corresponding to $\beta_{3,6}=\beta_{2,6}=3$, which arise as the homology of the complex

\smallskip 

\begin{center}
$0\to (S/I)_{3}\otimes \bigwedge^3(\k^3)\to (S/I)_{4}\otimes \bigwedge^{2}(\k^3)\to  (S/I)_{5}\otimes \k^3 \to (S/I)_{6}\to 0. $  \\
\end{center}
Since $(S/I)_{3}=S_3$ has composition factors $L(3,0,0)$, $L(2,1,0)$, each with multiplicity one, it follows that $(S/I)_{3}\otimes \bigwedge^3(\k^3)$ has composition factors $L(4,1,1)$ and $L(3,2,1)$, of dimensions $3$ and $7$ respectively. Since $\beta_{3,6}=3$, we conclude that 
$$
\operatorname{Tor}_3^S(S/I,\k)_6=L(4,1,1).
$$
We show that $L(4,1,1)$ is not a simple factor of $(S/I)_{4}\otimes \bigwedge^{2}(\k^3)$, and hence $\operatorname{Tor}_2^S(S/I,\k)_6$ is not isomorphic to $L(4,1,1)$. This explains why the Betti table of $S/I_{c,d}$ does not satisfy the maximal cancellation property. It suffices to show that $(S/I)_{4}\otimes \bigwedge^{2}(\k^3)$ has no weight vector of weight $(4,1,1)$. Since $\bigwedge^{2}(\k^3)$ is three dimensional with weights $(1,1,0)$, $(1,0,1)$, and $(0,1,1)$, it suffices to show that $(S/I)_{4}$ has no weight vector of weight $\lambda$ with $\lambda_1=3$. Since $(S/I)_{4}$ is spanned by permutations of $x^2y^2$ and $x^2yz$, the result follows. 
\end{ex}

\subsection{Minimal free resolutions in the case of two variables}
\label{section: Two variable case}
Let $S = \k[x,y]$ with $\Char k = p$ and let $I\subseteq S$ be an invariant ideal. By Proposition \ref{prop:pd} we know that $\pd_S(S/I)=2$.

\begin{prop}\label{theorem: hilbert burch}
    Let $I\subseteq S$ be a monomial ideal with $\depth(S/I)=0$, and order the minimal generators $x^{a_1}y^{b_1},\dots,x^{a_r}y^{b_r}$ of $I$ so that $a_j\geq a_{j+1}$ and $b_j\leq b_{j+1}$ for all $1\leq j
    \leq r$. The minimal free resolution of $S/I$ is
    \[0\leftarrow S/I\leftarrow S \xleftarrow{\begin{bmatrix} x^{a_1}y^{b_1} & \cdots & x^{a_r}y^{b_r}\end{bmatrix}}S^r\xleftarrow{\begin{bmatrix}
    y^{b_2-b_1}\\
    -x^{a_1-a_2} & y^{b_3-b_2}\\
    & -x^{a_2-a_3} & \ddots\\
    & & \ddots & y^{b_r-b_{r-1}}\\
    & & & -x^{a_{r-1}-a_r}
\end{bmatrix}}S^{r-1}\leftarrow 0,\]
 where blank entries of the $r\times (r-1)$ matrix are zero.  
\end{prop}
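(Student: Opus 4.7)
The plan is to invoke the Hilbert--Burch theorem after verifying the correct projective dimension. First I would observe that, by minimality of the given generating set, both inequalities $a_j \geq a_{j+1}$ and $b_j \leq b_{j+1}$ are in fact strict: if $a_j = a_{j+1}$, then $b_j \leq b_{j+1}$ would force $x^{a_j}y^{b_j} \mid x^{a_{j+1}}y^{b_{j+1}}$, contradicting minimality, and symmetrically for the $b$'s. Consequently every exponent appearing in the candidate $r \times (r-1)$ matrix is strictly positive, so every nonzero entry lies in $\langle x,y\rangle$. Once exactness is established, minimality of the resolution will follow from the standard criterion that a complex of free modules whose differentials have all entries in the maximal ideal is automatically minimal.

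Next I would compute the projective dimension. Since $\depth(S/I) = 0$ and $S$ is regular of dimension $2$, the Auslander--Buchsbaum formula gives $\pd_S(S/I) = 2$. Hence $I$ is a height-$2$ perfect ideal in the polynomial ring $\k[x,y]$, putting us squarely in the Hilbert--Burch setting: the minimal free resolution of $S/I$ must have the form
\[
0 \to S^{r-1} \xrightarrow{\phi} S^r \xrightarrow{[m_1\;\cdots\;m_r]} S \to S/I \to 0,
\]
with $m_j = (-1)^{j-1}\, a\, \Delta_j$ for some non-zero-divisor $a \in S$, where $\Delta_j$ is the $(r-1) \times (r-1)$ minor of $\phi$ obtained by deleting row $j$.

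The key computation is then to verify that the proposed bidiagonal matrix has minors which recover the generators, up to the allowed scalar and signs. After deleting row $j$ from the displayed matrix, the result splits as a block-diagonal matrix: a lower-triangular $(j-1) \times (j-1)$ block in columns $1,\dots,j-1$ using only the $y$-entries, and an upper-triangular $(r-j) \times (r-j)$ block in columns $j,\dots,r-1$ using only the $x$-entries. Telescoping the diagonals gives
\[
\Delta_j = (-1)^{r-j}\, x^{a_j - a_r}\, y^{b_j - b_1},
\]
so taking $a = x^{a_r} y^{b_1}$ yields $a\cdot \Delta_j = (-1)^{r-j}\, x^{a_j} y^{b_j}$, matching the required generators up to a uniform sign. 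A direct column-by-column check that $[m_1\;\cdots\;m_r]\circ \phi = 0$ (each column $k$ contributes $x^{a_k} y^{b_k}\cdot y^{b_{k+1}-b_k} - x^{a_{k+1}}y^{b_{k+1}}\cdot x^{a_k - a_{k+1}} = 0$) closes out the argument.

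The main obstacle is the block-triangular minor computation; everything else is formal. As a self-contained alternative that avoids citing Hilbert--Burch, one could instead verify exactness directly: injectivity of $\phi$ follows by induction on the row index (the first component of $\phi(g_1,\dots,g_{r-1})$ is $y^{b_2-b_1}g_1$, forcing $g_1 = 0$, and so on), and middle exactness reduces to showing that every pairwise syzygy $(y^{b_j-b_i}, -x^{a_i-a_j})$ between generators $i<j$ is a telescoping combination of the $r-1$ neighboring syzygies displayed as columns of $\phi$. I expect the Hilbert--Burch route to be considerably cleaner.
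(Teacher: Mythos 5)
Your proposal is correct and follows the same route as the paper: check that the two matrices compose to zero, compute the maximal minors of the bidiagonal matrix via its block-triangular structure, and invoke the Hilbert--Burch theorem, with minimality of the resolution coming from the strictness of the inequalities $a_j > a_{j+1}$ and $b_j < b_{j+1}$ forced by minimality of the generators. You are in fact slightly more careful than the paper's one-line proof: for a depth-zero monomial ideal that is not $\mathfrak{m}$-primary (e.g.\ $\langle x^2, xy\rangle$), the minors $\pm x^{a_j-a_r}y^{b_j-b_1}$ recover the generators only after multiplication by the non-zerodivisor $x^{a_r}y^{b_1}$, exactly as your argument accounts for.
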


\begin{proof}
A direct calculation shows that the two matrices above compose to zero, and that the maximal minors of the $r\times (r-1)$ matrix are the minimal generators of $I$ up to a sign. Since $\depth(S/I)=0$, the Hilbert--Burch Theorem \cite[Theorem 20.15b]{Eisenbud} yields that the above complex is a minimal free resolution of $S/I$.
\end{proof}

Using notation as in Section \ref{generalfreeres}, we say that $S/I$ has \defi{pure resolution} if for all $1\leq i\leq n$ there exists a unique $j\geq 0$ for which $\beta_{i,j}\neq 0$. In other words, each column of the Betti table has at most one nonzero entry. An easy necessary condition for $S/I$ to have a pure resolution is that $I$ must be generated in a single degree (so column one of $\beta(S/I)$ has one nonzero entry). In the case of two variables, we provide a complete characterization of invariant ideals with pure resolution.

\begin{prop}
    Let $I$ be an invariant ideal in $S = \k[x,y]$. Then $S/I$ has pure resolution if and only if $I=(\langle x, y \rangle^m)^{[p^e]}$ for some $m\geq 1$ and $e\geq 0$.
\end{prop}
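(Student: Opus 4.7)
The plan is to combine the Hilbert-Burch resolution of $S/I$ (Proposition~\ref{theorem: hilbert burch}, which applies since $\depth(S/I)=0$ by Proposition~\ref{prop:pd} and the Auslander-Buchsbaum formula) with a transvection argument using Lucas' theorem. The reverse direction is immediate: if $I=(\m^m)^{[p^e]}$, then its minimal generators are $(x^iy^{m-i})^{p^e}=x^{ip^e}y^{(m-i)p^e}$ for $0\le i\le m$, all of degree $d=mp^e$, and Proposition~\ref{theorem: hilbert burch} produces a Hilbert-Burch resolution in which every syzygy has degree $d+p^e$; thus $S/I$ has pure resolution.

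For the forward direction, purity of $F_1$ forces $I$ to be generated in a single degree $d$. Ordering the minimal generators $x^{a_j}y^{b_j}$ as in Proposition~\ref{theorem: hilbert burch}, the $j$-th syzygy has degree $d+(b_{j+1}-b_j)$, so purity of $F_2$ forces $b_{j+1}-b_j$ to equal a common value $k$. Since $L(d)\subseteq I_d$ by Corollary~\ref{cor:simplesub}, both $y^d$ and $x^d$ lie in $I$ and hence are minimal generators, so the set of $x$-powers of the minimal generators is the arithmetic progression $\{0,k,2k,\ldots,d\}$ with $d=mk$ for some $m\ge 1$.

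The crux is to identify $k$ as a power of $p$. Since $I$ is $\GL_2(\k)$-invariant, applying the transvection $g\colon x\mapsto x$, $y\mapsto x+y$ to $y^d\in I$ gives
\[(x+y)^d=\sum_{i=0}^d\binom{d}{i}x^iy^{d-i}\in I.\]
Because $I$ is a monomial ideal, every $x^iy^{d-i}$ with $\binom{d}{i}\not\equiv 0\pmod p$ must itself be a minimal generator, forcing $k\mid i$. By Lucas' theorem $\binom{d}{p^j}\equiv d_j\pmod p$, so taking $j$ to be the minimal index with $d_j\ne 0$ yields $k\mid p^j$, whence $k=p^e$ for some $e\ge 0$. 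The generators are then exactly $(x^iy^{m-i})^{p^e}$ for $0\le i\le m$, which are the generators of $(\m^m)^{[p^e]}$, so $I=(\m^m)^{[p^e]}$ as required. The main obstacle is this final step: one must verify that applying a single transvection to a single generator $y^d$ extracts enough information to pin down $k$, and Lucas' theorem supplies precisely the nonvanishing binomial coefficients $\binom{d}{p^j}\equiv d_j\pmod p$ needed to force $k$ to be a prime power.
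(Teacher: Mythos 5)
Your proof is correct, and its overall skeleton matches the paper's: both directions use the Hilbert--Burch resolution of Proposition \ref{theorem: hilbert burch}, purity forces generation in a single degree $d$ with the $x$-exponents of the generators forming an arithmetic progression of common difference $r$ running from $0$ to $d$, and the whole problem reduces to showing $r$ is a power of $p$. Where you diverge is in that crux step. The paper observes that the monomial $x^{d-p^s}y^{p^s}$ (with $s$ minimal such that $d_s\neq 0$) has carry pattern $(0,\dots,0)$, hence lies in the unique simple submodule $L(d)\subseteq I_d$ by Theorem \ref{carry-submodule correspondence} and Corollary \ref{cor:simplesub}, forcing $r\mid p^s$. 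You instead apply the transvection $y\mapsto x+y$ to $y^d$, expand $(x+y)^d$, use that a monomial ideal contains every monomial of any of its elements, and invoke Lucas' theorem to locate a nonvanishing $\binom{d}{p^j}$ with $j$ minimal such that $d_j\neq 0$ — which puts $x^{p^j}y^{d-p^j}$ among the generators and again forces $r\mid p^j$. The two arguments isolate the same monomial; yours is more elementary and self-contained (Lucas' theorem replaces the representation-theoretic input of Doty's submodule correspondence for this step), while the paper's fits the carry-pattern framework it has already built and makes visible that the relevant monomial lies in the socle $L(d)$. Both are complete; your identification of the remaining generators as $x^{ip^e}y^{(m-i)p^e}$ and hence $I=(\m^m)^{[p^e]}$ closes the argument just as the paper does.
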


\begin{proof}
    If $I=(\langle x, y \rangle^m)^{[p^e]}$ then $S/I$ has pure resolution by Proposition \ref{theorem: hilbert burch}.

    Conversely, suppose $S/I$ has pure resolution, so that $I$ is generated in a single degree. Then by Proposition \ref{theorem: hilbert burch} we have $I= \langle x^d,x^{d-r}y^r,x^{d-2r}y^{2r},\cdots ,y^d\rangle$ for some $d\geq 1$ and some $1\leq r\leq d$. In particular, $r$ must divide $d$. We will show that $r=p^e$ for some $e\geq 0$. In which case, setting $m=d/r$ will give $I=(\langle x, y \rangle^m)^{[p^e]}$.

    Let $p^s$ be the smallest power of $p$ appearing in the base-$p$ expansion of $d$ (so $p^s$ is the largest power of $p$ dividing $d$). Then the monomial $x^{d-p^s}y^{p^s}\in S_d$ has carry pattern $(0,\cdots,0)$, so it belongs to $I$ by Theorem \ref{carry-submodule correspondence}. In particular, $r$ divides $p^s$, so $r=p^e$ for some $0\leq e\leq s$, as required.
\end{proof}

Now that we understand purity in the two variable case, we pose the following question:
\begin{question}\label{q:Purity}
    Let $n \geq 3$.
    When is the minimal free resolution of $S/I$ pure?
\end{question}
We suspect that $S/I$ has pure resolution if and only if $I$ is a Frobenius power of a power of the homogeneous maximal ideal.

Next, we turn to graded Betti numbers of carry ideals in two variables (see Theorem \ref{thm:genstwovars} for the generators). We continue to write $C(d,2,p)$ for the set of carry patterns of monomials of degree $d$ in two variables. Recall that if $c\in C(d,2,p)$ then $c_i\in \{0,1\}$ for all $1\leq i\leq M$ (see Proposition \ref{prop: carry only 0 or 1}). 

Fix $d\geq 0$ and $c\in C(d,2,p)$. We produce a closed formula for the Betti numbers of $I_{c,d}$ in terms of contents of type $c$ segments of $d$ (see Section \ref{gens2} for notation). We recall the definition of $\mathcal{Z}(c,d)$ as in (\ref{typecseg}):
$$
\mathcal{Z}(c,d)=\{0\leq k\leq M \mid c_k=0,\;(c_{k-1},d_{k-1})\neq (0,p-1)\}.
$$
We let $\mathcal{Z}(c,d)=\{0=t_0<t_1<\dots<t_\ell\leq M\}$ as in (\ref{typecseg}), with the convention that $t_{\ell+1}=M+1$. We write $\{\delta_{t_0},\cdots,\delta_{t_{\ell}}\}$ for the type $c$ segmentation of $d=(d_0,\cdots,d_M)$. For $0\leq r\leq \ell$ we define
\begin{equation}
 \phi_r=p^{t_r} - \sum_{k=1}^{r-1}\cont{\delta_{t_k}}p^{t_k},  \end{equation}
where $\cont{\delta_{t_r}}$ denotes the content of $\delta_{t_r}$ (see (\ref{defcontent})). With this notation, our main result here is the following.

\begin{theorem}\label{THEOREM SFS}
Using notation as above, the following is true about the graded Betti numbers $\beta_{i,j}$ of $S/I_{c,d}$.
\begin{enumerate}
    \item $\beta_{1,j}$ is nonzero if and only if $j=d$, and
    $$
\beta_{1,d}=(\cont{\delta_{t_0}}+1)(\cont{\delta_{t_1}}+1)\cdots (\cont{\delta_{t_{\ell}}}+1).$$

    \item $\beta_{2,j}$ is nonzero only if $j=d+\phi_r$ for some $0\leq r\leq \ell$, and 
    $$
    \beta_{2,d+\phi_r}=\cont{\delta_{t_r}}\cdot (\cont{\delta_{t_{r+1}}}+1)(\cont{\delta_{t_{r+2}}}+1)\cdots (\cont{\delta_{t_{\ell}}}+1).
    $$

\end{enumerate}
\end{theorem}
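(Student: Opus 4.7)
The plan is to use Theorem \ref{thm:genstwovars} to enumerate the minimal generators of $I_{c,d}$ explicitly, and then apply the Hilbert--Burch description of the resolution from Proposition \ref{theorem: hilbert burch} to read off the second Betti numbers from the $x$--gaps between consecutive generators.

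\textbf{Step 1 (Part (1)).} By Theorem \ref{thm:genstwovars}, the minimal generators of $I_{c,d}$ are the monomials $x^a y^{d-a}$, all of degree $d$, where $a$ ranges over the set
\[
A := \Big\{\; w_0 p^{t_0} + w_1 p^{t_1} + \cdots + w_\ell p^{t_\ell}\; \Big|\; 0\leq w_r \leq \cont{\delta_{t_r}}\;\Big\}.
\]
Since all generators lie in degree $d$, it is immediate that $\beta_{1,j}\neq 0$ iff $j=d$. The key combinatorial point is to prove that the map $(w_0,\dots,w_\ell)\mapsto \sum_r w_r p^{t_r}$ is injective, so that $|A|=\prod_r(\cont{\delta_{t_r}}+1)$. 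This injectivity follows from the inequality
\[
\sum_{r'<r}\cont{\delta_{t_{r'}}}\, p^{t_{r'}} \;=\; \sum_{k=0}^{t_r-1} d_k p^k \;\leq\; p^{t_r}-1,
\]
which shows that the contribution $w_r p^{t_r}$ cannot be cancelled by the lower-order terms in a different representation.

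\textbf{Step 2 (Setup for Part (2)).} By Proposition \ref{prop:pd} we have $\pd_S(S/I_{c,d}) = 2$, hence $\depth(S/I_{c,d}) = 0$ by Auslander--Buchsbaum, so Proposition \ref{theorem: hilbert burch} applies. List the minimal generators as $x^{a_1}y^{d-a_1},\dots,x^{a_r}y^{d-a_r}$ with $a_1>a_2>\cdots>a_r$ the decreasing enumeration of $A$. The second syzygy matrix from Proposition \ref{theorem: hilbert burch} has columns indexed by consecutive pairs $(a_j,a_{j+1})$, and the $j$-th column has degree $d+(a_j-a_{j+1})$. Therefore $\beta_{2,d+\phi}$ equals the number of consecutive pairs in $A$ (in decreasing order) with gap exactly $\phi$.

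\textbf{Step 3 (Computing the gaps).} View $A$ as a mixed-radix odometer: as we enumerate $A$ in increasing order, incrementing $a$ by the smallest possible amount corresponds to incrementing the tuple $(w_0,w_1,\dots,w_\ell)$ lexicographically from the $w_0$ end. When the transition causes $w_0,\dots,w_{r-1}$ to wrap from $(\cont{\delta_{t_0}},\dots,\cont{\delta_{t_{r-1}}})$ back to $(0,\dots,0)$ while $w_r$ increments by $1$, the change in $a$ is exactly
\[
p^{t_r} - \sum_{r'=0}^{r-1}\cont{\delta_{t_{r'}}}\, p^{t_{r'}}\; =\; \phi_r.
\]
(For $r=0$ this is simply $\phi_0 = p^{t_0} = 1$.) In particular, the only gaps that appear in $A$ are exactly $\phi_0,\phi_1,\dots,\phi_\ell$, which already yields the ``only if'' portion of Part (2). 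Counting transitions of type $r$: the coordinate $w_r$ must move from some value in $\{0,\dots,\cont{\delta_{t_r}}-1\}$ to the next, while $w_{r+1},\dots,w_\ell$ remain fixed at arbitrary feasible values. This gives exactly $\cont{\delta_{t_r}}\cdot\prod_{r'>r}(\cont{\delta_{t_{r'}}}+1)$ such transitions, which is the desired formula for $\beta_{2,d+\phi_r}$.

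\textbf{Main obstacle.} The conceptual content is already packaged in Theorem \ref{thm:genstwovars} and the Hilbert--Burch form of Proposition \ref{theorem: hilbert burch}; the only real work is the bookkeeping in Step 3 to identify $A$ with an odometer and to verify that the ``carry size'' at position $r$ is exactly $\phi_r$. The slightly delicate point is ruling out accidental coincidences between gap sizes or unexpected gap values; this is handled uniformly by the inequality $\sum_{r'<r}\cont{\delta_{t_{r'}}}p^{t_{r'}} < p^{t_r}$ from Step 1, which guarantees that $\phi_r>0$ for all $r$ and that the mixed-radix description of $A$ is faithful.
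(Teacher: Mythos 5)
Your proposal is correct and follows essentially the same route as the paper: part (1) comes directly from the generator description in Theorem \ref{thm:genstwovars}, and part (2) reads $\beta_{2,j}$ off the Hilbert--Burch matrix of Proposition \ref{theorem: hilbert burch} by computing the gaps between lexicographically consecutive generators, which is exactly your odometer computation identifying each gap with some $\phi_r$ and counting the transitions of each type. The only difference is cosmetic: you make explicit the injectivity of the mixed-radix representation via $\sum_{r'<r}\cont{\delta_{t_{r'}}}p^{t_{r'}}\le p^{t_r}-1$, a point the paper leaves implicit.
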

Note that $\beta_{2,d+\phi_r}$ may be $0$, in particular this occurs when $\cont{\delta_{t_r}}$ is $0$.

\begin{ex}
    Let $p=3$, $n=2$, and $d=30$.
    Using Macaulay2 \cite{GS}, we see that the Betti table of $S/I_{(1,0,1),d}$ is
    $$\begin{matrix}
         & 0 & 1 & 2\\
      \text{total:}
         & 1 & 16 & 15\\
      0: & 1 & . & .\\
      1: & . & . & .\\
      \vdots & \vdots & \vdots & \vdots \\
      28: & . & . & .\\
      29: & . & 16 & 12\\
      30: & . & . & .\\
      \vdots & \vdots & \vdots & \vdots \\
      33: & . & . & .\\
      34: & . & . & 3
      \end{matrix}$$

      \smallskip
      
    The base-$3$ expansion of $d=30$ is $(0,1,0,1)$. Thus $\mathcal{Z}(c,d) = \{0,2\}$, and the type $(1,0,1)$ segments of $d$ are
    $$
    \delta_0=(0,1),\quad \delta_2=(0,1),
    $$
    with contents $\cont{\delta_0}=\cont{\delta_2}=3$. By Theorem~\ref{thm:genstwovars} we obtain that $I_{(1,0,1), d} = \m^3 (\m^3)^{[3^2]}$, which has $\beta_{1,d-1} = (3 + 1)(3 + 1) = 16$ generators, as asserted by Theorem \ref{THEOREM SFS}(1). In order to work out the other Betti numbers, we have
    
    \[\phi_0 = 1, \quad \text{and} \quad\phi_1 = 3^2 - 3 = 6,\]
    so that
    \[\beta_{2, 30 + \phi_0} =\beta_{2, 30 + 1} = 3(3+1) = 12 ,\quad \text{and}\quad 
    \beta_{2, 30 + \phi_1} = \beta_{2, 30 + 6} = 3.
    \]
    We explicate a syzygy of each degree. The  following monomials are among the generators of $I_{(1,0,1)}$:
    \[x^{30} ,\quad 
      x^{29}y ,\quad
      x^{27}y^3 ,\quad
      x^{21}y^9 .
    \]
 There is a syzygy of degree $\phi_0=1$ between $x^{30}$ and $x^{29}y$, and there is a syzygy of degree $\phi_1=6$ between $x^{27}y^3$ and $x^{21}y^9$.
\end{ex}

Using Theorem \ref{THEOREM SFS} we can calculate the Betti table of large examples without having the generators.

 \begin{ex}\label{ex: running ex syz}
     Let $p=5$, and let $d=62102$, which has base-$p$ expansion $(2,0,4,1,4,4,3)$. We calculate the Betti number of $S/I_{c,d}$, where $c =(1,1,0,1,0,0)\in C(d,2,p)$. 

     Here, $\mathcal{Z}(c,d)=\{0,3,5\}$, and the $c$-segments of $d$ are
     $$
     \delta_0=(2,0,4),\quad \delta_3=(1,4),\quad \delta_5=(4,3).
     $$
     The content of each $c$-segment is
     $$
     \cont{\delta_0}=2+4\cdot 25=102,\quad \cont{\delta_3}=1+4\cdot 5=21,\quad \cont{\delta_5}=4+3\cdot 5=19.
     $$
     We conclude from Theorem \ref{THEOREM SFS}(1) that
     $$\beta_{1,d}=(102+1)(21+1)(19+1)=45320.$$
  We compute the degrees $\phi_r$ of the syzygies for $0\leq r\leq 2$:
 $$
 \phi_0=5^0=1,\quad \quad \phi_1=5^{3}- \cont{(2,0,4)}5^0 = 23,\quad \quad \phi_2=5^5 -\cont{(2,0,4)}5^0 - \cont{(1,4)}5^3 = 398.
 $$
 The multiplicities of each degree of syzygy are
 $$
 \beta_{2,62103}=102\cdot (21+1)(19+1)=4488,\quad \beta_{2,62125}=21\cdot (19+1)=420,\quad \beta_{2,62500}=19.
 $$
 Therefore, the regularity is $\operatorname{reg}(S/I_{c,d})=62498$.
\end{ex}

\begin{proof}[Proof of Theorem \ref{THEOREM SFS}]
Part (1) is an immediate consequence of Theorem \ref{thm:genstwovars}, as $I_{c,d}$ is generated in degree $d$ and $\beta_{0,d}$ is the number of minimal generators.   

(2) By Proposition \ref{theorem: hilbert burch} we have  $\beta_{2,d+j}$ is nonzero only if there is $0\leq a\leq d-1$ such that $x^{a+j}y^{d-a-j}$ and $x^ay^{d-a}$ are lexicographically consecutive generators of $I_{c,d}$. Furthermore, $\beta_{2,d+j}$ is the quantity of such $a$'s.

Let $x^ay^{d-a}$ be a minimal generator of $I_{c,d}$ with $0\leq a\leq d-1$. By Theorem \ref{thm:genstwovars} we have that
\begin{equation}\label{sumofa}
a=w_0p^{t_0} + w_1p^{t_1}+w_2p^{t_2}+\cdots +w_{\ell}p^{t_{\ell}},
\end{equation}
for some $0\leq w_k\leq \cont{\delta_{t_k}}$. Let $x^{a'}y^{d-a'}$ be the generator of $I_{c,d}$ immediately before $x^ay^{d-a}$ lexicographically, and let $r$ be minimal such that $w_r<\cont{\delta_{t_r}}$. If follows from Theorem \ref{thm:genstwovars} that
$$
a'=0 p^{t_0}+\cdots+0 p^{t_{r-1}}+(w_r+1)p^{t_r}+w_{r+1}p^{t_{r+1}}+\cdots+w_{\ell}p^{t_{\ell}},
$$
so that $a-a'=\phi_r$. Thus, $\beta_{2,d+j}\neq 0$ only if $j=\phi_r$ for some $1\leq r\leq \ell$. Furtheremore, $\beta_{2,d+\phi_r}$ is the quantity of $0\leq a\leq d-1$ as in (\ref{sumofa}) such that $w_k=\cont{\delta_{t_k}}$ for all $1\leq k<r$ and $w_r<\cont{\delta_{t_r}}$. Therefore, 
$$
    \beta_{2,d+\phi_r}=\cont{\delta_{t_r}}\cdot (\cont{\delta_{t_{r+1}}}+1)(\cont{\delta_{t_{r+2}}}+1)\cdots (\cont{\delta_{t_{\ell}}}+1),
    $$
as claimed.
\end{proof}

As an application, we give a formula for the maximal degree of a monomial not in $I_{c,d}$. The following is an immediate consequence of Proposition \ref{regprop} and Theorem \ref{THEOREM SFS}, noting that $\phi_i<\phi_j$ for all $i<j$.

\begin{cor}\label{regn2} Using notation as above
$$
\operatorname{reg}(S/I_{c,d})=\max\{e\geq 0 \mid (S/I_{c,d})_e\neq 0  \} = d+\phi_{\ell}-2.
$$
\end{cor}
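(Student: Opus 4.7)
The plan is to combine Proposition \ref{regprop} with Theorem \ref{THEOREM SFS} and the standard Betti-table characterization of regularity. The first equality $\operatorname{reg}(S/I_{c,d})=\max\{e\geq 0\mid (S/I_{c,d})_e\neq 0\}$ is immediate from Proposition \ref{regprop} applied to the invariant ideal $I_{c,d}$, so I would invoke it directly and focus all the work on the second equality $\operatorname{reg}(S/I_{c,d})=d+\phi_{\ell}-2$.

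For the second equality, I would use the characterization
\[
\operatorname{reg}(S/I_{c,d})=\max\{j\geq 0\mid \beta_{i,i+j}(S/I_{c,d})\neq 0\text{ for some }i\geq 1\},
\]
which reads off the index of the last nonzero row of the Betti table. By Proposition \ref{prop:pd} we have $\pd_S(S/I_{c,d})=2$, so only $i=1$ and $i=2$ can contribute. Theorem \ref{THEOREM SFS}(1) shows that the only nonzero first Betti number is $\beta_{1,d}$, which contributes $j=d-1$. Theorem \ref{THEOREM SFS}(2) shows that the only possibly nonzero second Betti numbers are $\beta_{2,d+\phi_r}$ for $0\leq r\leq \ell$, contributing $j=d+\phi_r-2$.

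Since the $\phi_r$'s are strictly increasing in $r$, the largest candidate from the second column is $d+\phi_{\ell}-2$. I would then check two things. First, that $\beta_{2,d+\phi_{\ell}}$ is actually nonzero: by Theorem \ref{THEOREM SFS}(2) this quantity equals $\cont{\delta_{t_{\ell}}}$, and since the final segment $\delta_{t_{\ell}}=(d_{t_{\ell}},\ldots,d_M)$ contains the top base-$p$ digit $d_M\neq 0$ (by definition of $M$), we have $\cont{\delta_{t_{\ell}}}\geq p^{M-t_{\ell}}\geq 1$. Second, that this value dominates the contribution from $i=1$: since $\phi_{\ell}\geq \phi_0=p^{t_0}=p^0=1$, we have $d+\phi_{\ell}-2\geq d-1$. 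Combining these, the maximum $j$ for which $\beta_{i,i+j}\neq 0$ is exactly $d+\phi_{\ell}-2$, giving the stated formula.

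The argument is essentially bookkeeping on top of Theorem \ref{THEOREM SFS}; the main substantive points are simply the strict monotonicity $\phi_0<\phi_1<\cdots<\phi_{\ell}$ and the non-vanishing of $\cont{\delta_{t_{\ell}}}$, both of which follow directly from the definitions. I do not anticipate any serious obstacle here, as all the combinatorial machinery was already expended in establishing Theorem \ref{THEOREM SFS}.
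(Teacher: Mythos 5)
Your proposal is correct and matches the paper's argument, which derives the corollary directly from Proposition \ref{regprop} and Theorem \ref{THEOREM SFS} together with the monotonicity $\phi_i<\phi_j$ for $i<j$. Your write-up simply makes explicit the bookkeeping the paper leaves implicit (in particular the non-vanishing of $\beta_{2,d+\phi_\ell}=\cont{\delta_{t_\ell}}$ and the comparison with the row $d-1$ coming from the first syzygies), so there is no substantive difference in approach.
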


Now that we know the Betti numbers of carry ideals, we turn to the problem of determining the structure of $\operatorname{Tor}_i^S(S/I,\k)$ as a representation. Since the Betti table of $S/I_{c,d}$ has at most one nonzero entry on each diagonal, we obtain the following  consequence of Proposition \ref{prop:tor}.

\begin{cor}\label{repontorn2}
Let $I=I_{c,d}\subseteq S=\k[x,y]$ (or any invariant ideal generated in a single degree $d$). For $j\geq 0$ we have the following in the Grothendieck group of representations of $\GL_n(\k)$:

\smallskip

\begin{center}
$\big[\operatorname{Tor}_2^S(S/I,\k)_j\big] = \big[(S/I)_{j-2}\otimes \bigwedge^2(\k^2)\big] - \big[(S/I)_{j-1}\otimes \k^2\big] + \big[(S/I)_j\big].$\\
\end{center}
\end{cor}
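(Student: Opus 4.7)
The plan is to apply Proposition~\ref{prop:tor} with $n = 2$. For each $j \geq 0$ this identifies $\operatorname{Tor}_i^S(S/I,\k)_j$ (for $i = 0, 1, 2$) with the homology at the $i$-th position of the three-term complex
\[
0 \to (S/I)_{j-2}\otimes \textstyle\bigwedge^2(\k^2) \to (S/I)_{j-1}\otimes \k^2 \to (S/I)_j \to 0.
\]
Because each differential in this complex is a $\GL_2(\k)$-equivariant map between finite-dimensional representations, the alternating sum of the classes of the terms equals the alternating sum of the classes of the homologies in the Grothendieck group. This yields the Euler-characteristic identity
\[
\bigl[(S/I)_{j-2}\otimes \textstyle\bigwedge^2(\k^2)\bigr] - \bigl[(S/I)_{j-1}\otimes \k^2\bigr] + \bigl[(S/I)_j\bigr] = \bigl[\operatorname{Tor}_2^S(S/I,\k)_j\bigr] - \bigl[\operatorname{Tor}_1^S(S/I,\k)_j\bigr] + \bigl[\operatorname{Tor}_0^S(S/I,\k)_j\bigr].
\]

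Next I would invoke the sparsity observation that precedes the corollary: the Betti table of $S/I$ has at most one nonzero entry on each degree-$j$ diagonal. Since $I$ is generated in a single degree $d > 0$, the value $\beta_{0,j}$ is nonzero only at $j=0$, $\beta_{1,j}$ only at $j = d$, and by the Hilbert--Burch form of the resolution given in Proposition~\ref{theorem: hilbert burch} every syzygy degree contributing to $\beta_{2,j}$ satisfies $j > d$. Hence these three index sets are pairwise disjoint, and for each $j$ at most one of the three classes $[\operatorname{Tor}_i^S(S/I,\k)_j]$ is nonzero. On any diagonal for which $\operatorname{Tor}_2$ is the nonzero term, the contributions from $\operatorname{Tor}_0$ and $\operatorname{Tor}_1$ drop out, and the Euler-characteristic identity collapses to the displayed formula.

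The entire argument rests on standard Euler-characteristic bookkeeping in the Grothendieck group combined with the sparsity statement already justified via Proposition~\ref{theorem: hilbert burch} and Theorem~\ref{THEOREM SFS}. I therefore do not anticipate any genuine obstacle; the only point requiring a small verification is that the differentials of the three-term complex are $\GL_2(\k)$-equivariant, which is built into Proposition~\ref{prop:tor} and the construction of the Koszul complex.
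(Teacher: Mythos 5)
Your proposal is correct and follows essentially the same route as the paper: Proposition \ref{prop:tor} supplies the three-term Koszul strand, the Euler characteristic in the Grothendieck group equals the alternating sum of the Tor classes, and the one-nonzero-entry-per-diagonal property of the Betti table (via Proposition \ref{theorem: hilbert burch} and Theorem \ref{THEOREM SFS}) eliminates the $\operatorname{Tor}_0$ and $\operatorname{Tor}_1$ contributions. (Note that both your argument and the paper's implicitly exclude $j=0$ and $j=d$, where the displayed identity would instead produce $[\k]$ and $-[\operatorname{Tor}_1^S(S/I,\k)_d]$ on the right-hand side.)
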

    
\begin{ex}\label{exfindingtor}
Let $S=\mathbf{k}[x,y]$ with $\operatorname{char}(\mathbf{k})=2$, and consider the ideal $I=I_{(0,0),5}=\langle x^5, x^4y, xy^4, y^5\rangle$. By Proposition \ref{theorem: hilbert burch} the minimal free resolution of $S/I$ is:
   \[0\leftarrow S/I\leftarrow S \xleftarrow{\begin{bmatrix} x^5 & x^4y & xy^4 & y^5\end{bmatrix}}S^4\xleftarrow{\begin{bmatrix}
    y & 0 & 0\\
    -x & y^3 & y\\
    0 & -x^3 & x
\end{bmatrix}}S^3\leftarrow 0,\]
so the nonzero graded Betti numbers are
$$
\beta_{0,0}=1,\quad\beta_{1,5}=4,\quad \beta_{2,6}=2.\quad \beta_{2,8}=1.
$$
Since $I$ is generated by $L(5,0)\subseteq S_5$ we have $\operatorname{Tor}_1^S(S/I,\k)_5=L(5,0)$. To find $\operatorname{Tor}_2^S(S/I,\k)_6$ and $\operatorname{Tor}_2^S(S/I,\k)_8$ we use Corollary \ref{repontorn2}. Since $(S/I)_4=S_4$, $(S/I)_5=L(3,2)$, and $(S/I)_6=L(3,3)$, we obtain

\smallskip

\begin{center}
$\big[\operatorname{Tor}_2^S(S/I,\k)_6\big] = \big[S_4\otimes \bigwedge^2(\k^2)\big] - \big[L(3,2)\otimes \k^2\big] + \big[L(3,3)\big].$\\
\end{center}
As $\bigwedge^2(\k^2)=L(1,1)$ and $\k^2=L(1,0)$, this expression is equal to
$$
\big(\big[L(5,1)\big]+\big[L(4,2)]+\big[L(3,3)\big]\big)-\big(\big[L(4,2)]+2\big[L(3,3)]\big)+\big(\big[L(3,3)]\big)=\big[L(5,1)\big],
$$
so $\operatorname{Tor}_2^S(S/I,\k)_6=L(5,1)$. Next, we have 

\smallskip

\begin{center}
$\operatorname{Tor}_2^S(S/I,\k)_8 = L(3,3)\otimes \bigwedge^2(\k^2)=L(4,4).$\\
\end{center}
We recover this calculation using an equivariant free resolution in Example \ref{equivfreeex} below.
\end{ex}

\subsection{Remarks on equivariant free resolutions}\label{equivariantres} Let $I\subseteq S$ be an invariant ideal. By an \defi{equivariant free resolution}  of $S/I$ we mean a free resolution $\{(F_i, \partial_i)\}_{i\geq 0}$ of $S/I$ over $S$ such that: (i) each $F_i$ is isomorphic to $S\otimes W_i$, where $W_i$ is a representation of $\GL_n(\k)$, (ii) the differentials $\partial_i$ are equivariant, meaning that $g\cdot\partial_i(m)=\partial_i(g\cdot m)$ for all $g\in \GL_n(\k)$ and all $m\in F_i$. 

Since the representation theory of $\GL_n(\k)$ is not semi-simple in positive characteristic, a minimal free resolution need not admit an equivariant structure, as demonstrated by the following example.

\begin{ex}
 Let $S=\mathbf{k}[x,y]$ with $\operatorname{char}(\mathbf{k})=2$, and consider the ideal $I=\langle x^4,x^3y^2, x^2y^3, y^4\rangle$, which is the sum of $I_{(0,0),4}=\langle x^4, y^4\rangle$ and $\langle x ,y\rangle ^5$. We consider the natural surjection
 $$
I \xleftarrow{\begin{bmatrix} x^4 & x^3y^2 & x^2y^3 & y^4\end{bmatrix}}S^4=:F.
 $$
 We will show that $F$ cannot be endowed with an equivariant structure that makes this morphism equivariant. Let $\phi$ be the map from $F$ to $I$ above, and let $e_1,e_2,e_3,e_4$ be the basis for $F$ such that $\phi(e_1)=x^4$, $\phi(e_2)=x^3y^2$, $\phi(e_3)=x^2y^3$, $\phi(e_4)=y^4$. Suppose for contradiction that $F$ is a representation that makes $\phi$ equivariant. Then $F=S\otimes (U_4\oplus U_5)$, where $U_4$ and $U_5$ are representations with $U_4=\langle e_1,e_4\rangle$ and $U_5=\langle e_2,e_3\rangle$. We have
 $$
 \begin{bmatrix}
    1 & 1\\
    0 & 1
\end{bmatrix}\cdot x^3y^2=x^5+x^3y^2\quad \implies \quad 
 \phi\left(\begin{bmatrix}
    1 & 1\\
    0 & 1
\end{bmatrix}\cdot e_2\right)=x^5+x^3y^2.
 $$
 Thus, $U_5$ contains a vector that gets sent to $x^5$ under $\phi$. This is a contradiction, as such a vector must be a linear combination of $e_2,e_3$, and hence gets sent to a linear combination of $x^3y^2$ and $x^2y^3$.
 \end{ex}

Despite the example above, equivariant resolutions exist, and may be constructed inductively as follows (see \cite[Section 2.3]{broer} for more details). Let $i\geq 0$ and let $m_1,\cdots,m_r$ be minimal generators for $\operatorname{ker}(\partial_i:F_i\to F_{i-1})$ with the convention that $F_{-1}=S/I$. Let $W_{i+1}$ be the $\GL_n(\k)$ subrepresentation of $F_i$ generated by $m_1,\cdots, m_r$, and inductively define $F_{i+1}=S\otimes W_{i+1}$ and $\partial_{i+1}(1\otimes m)=m\in F_{i-1}$. The resulting complex $\{(F_i, \partial_i)\}_{i\geq 0}$ is an equivariant free resolution of $S/I$, but it may not be minimal when $\operatorname{char}(\mathbf{k})=p>0$. The issue is that the representations $W_{i}$ need not be semi-simple, so if $W_{i}$ is generated in multiple degrees, $F_{i}$ may have superfluous generators.

In the next example, we use this algorithm above to construct a (non-minimal) equivariant free resolution.

\begin{ex} \label{equivfreeex}
We find an equivariant free resolution in the situation of Example \ref{exfindingtor}, where $\operatorname{char}(\mathbf{k})=2$ and $I=I_{(0,0),5}=\langle x^5, x^4y, xy^4, y^5\rangle\subseteq \k[x,y]$. Using notation as above, we have $W_1=L(5,0)$ so that $F_1=S\otimes L(5,0)=S^4$. The first syzygies are
$$
v_{(5,1)}=y\otimes x^5-x\otimes x^4y,\quad v_{(4,4)}=y^3\otimes x^4y-x^3\otimes xy^4,\quad v_{(1,5)}=y\otimes xy^4-x\otimes y^5,
$$
where we have indexed them with their torus weights. Here, $v_{(5,1)},v_{(1,5)}\in S_1\otimes L(5,0)$ and $v_{(4,4)}\in S_3\otimes L(5,0)$. When we act on these with the elementary matrices we obtain:
$$
\begin{bmatrix}
    1 & 1\\
    0 & 1
\end{bmatrix}\cdot v_{(5,1)}=v_{(5,1)},\quad \begin{bmatrix}
    1 & 0\\
    1 & 1
\end{bmatrix}\cdot v_{(5,1)}=v_{(5,1)}+v_{(1,5)},
$$
$$
\begin{bmatrix}
    1 & 1\\
    0 & 1
\end{bmatrix}\cdot v_{(1,5)}=v_{(5,1)}+v_{(1,5)},\quad \begin{bmatrix}
    1 & 0\\
    1 & 1
\end{bmatrix}\cdot v_{(1,5)}=v_{(1,5)},
$$
$$
\begin{bmatrix}
    1 & 1\\
    0 & 1
\end{bmatrix}\cdot v_{(4,4)}=v_{(4,4)}+(x^2+xy+y^2)v_{(5,1)},\quad \begin{bmatrix}
    1 & 0\\
    1 & 1
\end{bmatrix}\cdot v_{(4,4)}=v_{(4,4)}+(x^2+xy+y^2)v_{(1,5)}.
$$
In particular, the subrepresentation $W_2\subseteq S\otimes L(5,0)$ generated by $v_{(5,1)}$, $v_{(1,5)}$, and $v_{(4,4)}$ is spanned by the weight vectors 
\begin{equation}\label{eq:weight}
v_{(5,1)},\; v_{(1,5)},\; v_{(4,4)},\; x^2v_{(5,1)}, \;xyv_{(5,1)},\; y^2v_{(5,1)}, \;x^2v_{(1,5)},\; xyv_{(1,5)}, \;y^2v_{(1,5)}.
\end{equation}
The latter six span a representation that is isomorphic to $S_2\otimes L(5,1)$, which is a nontrivial extension of $L(6,2)$ by $L(7,1)$. Putting it all together, we conclude that $W_2$ is length four with composition factors $L(5,1)$, $L(4,4)$, $L(7,1)$, and $L(6,2)$. More precisely, there is an isomorphism $W_2\cong L(5,1)\oplus W_2'$, where $W_2'$ is characterized by the following two non-split short exact sequences:
$$
0\longrightarrow W_2''\longrightarrow W_2'\longrightarrow L(4,4)\longrightarrow 0,\quad 0\longrightarrow L(7,1)\longrightarrow W_2'' \longrightarrow L(6,2)\longrightarrow 0.
$$
Here, $L(5,1)$ is spanned by $v_{(5,1)}$, $v_{(1,5)}$, $L(4,4)$ is spanned by $v_{(4,4)}$,  $L(7,1)$ is spanned by $x^2v_{(5,1)}$, $y^2v_{(5,1)}$, $x^2v_{(1,5)}$, $y^2v_{(1,5)}$, and  $L(6,2)$ is spanned by $xyv_{(5,1)}$, $xyv_{(1,5)}$.

There are six syzygies between the vectors in (\ref{eq:weight}):
$$
u_{(7,1)}=x^2\otimes v_{(5,1)}-1\otimes x^2v_{(5,1)},\quad u_{(6,2)}=xy\otimes v_{(5,1)}-1\otimes xyv_{(5,1)},\quad u_{(5,3)}=y^2\otimes v_{(5,1)}-1\otimes y^2v_{(5,1)},
$$
$$
u_{(3,5)}=x^2\otimes v_{(1,5)}-1\otimes x^2v_{(1,5)},\quad u_{(2,6)}=xy\otimes v_{(1,5)}-1\otimes xyv_{(1,5)},\quad u_{(1,7)}=y^2\otimes v_{(1,5)}-1\otimes y^2v_{(1,5)}.
$$
It is straightforward to verify that the span of these vectors is isomorphic to $S_2\otimes L(5,1)$ (in particular, after acting on these by elementary matrices, we do not get any new weight vectors). Setting $W_3=S_2\otimes L(5,1)$ we obtain the following (non-minimal) equivariant free resolution of $S/I$:
\[ S \xleftarrow{\begin{bmatrix} x^5 & x^4y & xy^4 & y^5\end{bmatrix}}S\otimes W_1\xleftarrow{\begin{bmatrix}
    y & 0 & 0 & x^2y & xy^2 & y^3 & 0 & 0 & 0\\
    -x & y^3 & 0 & -x^3 & -x^2y & -xy^2 & 0 & 0 & 0\\
    0 & -x^3 & y & 0 & 0 & 0 & x^2y & xy^2 & y^3\\
    0 & 0 & -x & 0 & 0 & 0 & -x^3 & -x^2y & -xy^2
\end{bmatrix}}S\otimes W_2\]
\[ S\otimes W_2 \xleftarrow{\begin{bmatrix}
    x^2 & xy & y^2 & 0 & 0 & 0\\
    0 & 0 & 0 & 0 & 0 & 0\\
    0 & 0 & 0 & x^2 & xy & y^2\\
    -1 & 0 & 0 & 0 & 0 & 0\\
    0 & -1 & 0 & 0 & 0 & 0\\
    0 & 0 & -1 & 0 & 0 & 0\\ 0 & 0 & 0 & -1 & 0 & 0\\
    0 & 0 & 0 & 0 & -1 & 0\\
    0 & 0 & 0 & 0 & 0 & -1\end{bmatrix}} S\otimes W_3\xleftarrow{} 0.\]

After tensoring this resolution with $\mathbf{k}\cong S/\langle x,y\rangle$ we see that we have the following description of the Tor modules as representations of $\GL_2(\mathbf{k})$:
$$
\operatorname{Tor}_0^S(S/I,\mathbf{k})\cong \mathbf{k},\quad 
\operatorname{Tor}_1^S(S/I,\mathbf{k})\cong L(5,0),\quad \operatorname{Tor}_2^S(S/I,\mathbf{k})\cong L(5,1)\oplus L(4,4),
$$
recovering the calculations in Example \ref{exfindingtor}.
\end{ex}

\begin{question}
    When does the minimal free resolution of an invariant ideal admit an equivariant structure?
\end{question}
We suspect that the minimal free resolution admits an equivariant structure if and only if it is pure (see Question~\ref{q:Purity}).

\begin{prob}
    Following the algorithm above, calculate equivariant resolutions of carry ideals in two variables, and determine the submodule lattice of the representations $W_i$.
\end{prob}

\section*{Acknowledgments} We thank Claudiu Raicu for suggesting this project, and we thank Zeus Dantas e Moura and Keller VandeBogert for helpful conversations. This project was initiated at the Combinatorics \& Algebra REU in 2023 at the University of Minnesota - Twin Cities, funded by National Science Foundation Grant No. 1745638.

\begin{bibdiv}
		\begin{biblist}

\bib{broer}{article}{
   author={Broer, Abraham},
   author={Reiner, Victor},
   author={Smith, Larry},
   author={Webb, Peter},
   title={Extending the coinvariant theorems of Chevalley, Shephard-Todd,
   Mitchell, and Springer},
   journal={Proc. Lond. Math. Soc. (3)},
   volume={103},
   date={2011},
   number={5},
   pages={747--785},
   issn={0024-6115},
}

 \bib{DEP}{article}{
   author={de Concini, C.},
   author={Eisenbud, David},
   author={Procesi, C.},
   title={Young diagrams and determinantal varieties},
   journal={Invent. Math.},
   volume={56},
   date={1980},
   number={2},
   pages={129--165},
} 

\bib{Doty}{article}{
   author={Doty, Stephen},
   title={Submodules of symmetric powers of the natural module for ${\rm
   GL}_n$},
   conference={
      title={Invariant theory},
      address={Denton, TX},
      date={1986},
   },
   book={
      series={Contemp. Math.},
      volume={88},
      publisher={Amer. Math. Soc., Providence, RI},
   },
   isbn={0-8218-5094-6},
   date={1989},
   pages={185--191},
} 

\bib{Eisenbud}{book}{
   author={Eisenbud, David},
   title={Commutative algebra},
   series={Graduate Texts in Mathematics},
   volume={150},
   publisher={Springer-Verlag, New York},
   date={1995},
   pages={xvi+785},
   isbn={0-387-94268-8},
   isbn={0-387-94269-6},
}

\bib{gos}{book}{
   author={Eisenbud, David},
   title={The geometry of syzygies},
   series={Graduate Texts in Mathematics},
   volume={229},
   note={A second course in commutative algebra and algebraic geometry},
   publisher={Springer-Verlag, New York},
   date={2005},
   pages={xvi+243},
   isbn={0-387-22215-4},
}

\bib{GS}{article}{
       author = {Grayson, Daniel R.},
        author={Stillman, Michael E.},
          title = {Macaulay2, a software system for research in algebraic geometry},
          journal = {Available at \url{http://www2.macaulay2.com}}
          }

\bib{jantzen}{book}{
  title={Representations of algebraic groups, second edition},
  author={Jantzen, Jens Carsten},
  volume={107},
  year={2003},
  publisher={American Mathematical Soc.}
}

\bib{murai}{article}{
   author={Murai, Satoshi},
   author={Raicu, Claudiu},
   title={An equivariant Hochster's formula for $\mathfrak{S}_n$-invariant
   monomial ideals},
   journal={J. Lond. Math. Soc. (2)},
   volume={105},
   date={2022},
   number={3},
   pages={1974--2010},
   issn={0024-6107},
}

\bib{raicu}{article}{
   author={Raicu, Claudiu},
   title={Regularity of $\mathfrak{S}_n$-invariant monomial ideals},
   journal={J. Combin. Theory Ser. A},
   volume={177},
   date={2021},
   pages={Paper No. 105307, 34},
   issn={0097-3165},
}

\bib{Weyman}{book}{
   author={Weyman, Jerzy},
   title={Cohomology of vector bundles and syzygies},
   series={Cambridge Tracts in Mathematics},
   volume={149},
   publisher={Cambridge University Press, Cambridge},
   date={2003},
   pages={xiv+371},
   isbn={0-521-62197-6},
}

\end{biblist}
	\end{bibdiv}

\end{document}